\newtheorem{theorem}{Theorem}[section]
\newtheorem{prop}[theorem]{Proposition}
\newtheorem{lemma}[theorem]{Lemma}
\newtheorem{remark}[theorem]{Remark}
\newtheorem{definition}[theorem]{Definition}
\newtheorem{cor}[theorem]{Corollary}
\newtheorem{speculation}[theorem]{Speculation}
\newtheorem{example}[theorem]{Example}
\newtheorem{lem}[theorem]{Lemma}
\newtheorem{condition}[theorem]{Condition}
\theoremstyle{definition}
\newcommand{\wt}[1]{\widetilde{#1}}
\newcommand{\wh}[1]{\widehat{#1}}
\newcommand{\bthm}{\begin{theorem}}
\newcommand{\ethm}{\end{theorem}}
\newcommand{\blem}{\begin{lem}}
\newcommand{\elem}{\end{lem}}
\newcommand{\bcor}{\begin{cor}}
\newcommand{\ecor}{\end{cor}}
\newcommand{\bprop}{\begin{prop}}
\newcommand{\eprop}{\end{prop}}
\newcommand{\brmk}{\begin{remark}}
\newcommand{\ermk}{\end{remark}}
\newcommand{\bpf}{\begin{proof}}
\newcommand{\epf}{\end{proof}}
\newcommand{\beq}{\begin{equation}}
\newcommand{\eeq}{\end{equation}}
\numberwithin{equation}{section}
\def\C{\mathbb{C}}
\def\L{\mathbb{L}}
\def\Q{\mathbb{Q}}
\def\R{\mathbb{R}}
\def\Z{\mathbb{Z}}
\def\T{\mathbb{T}}
\def\CP{\mathbb{CP}}
\def\cC{\mathcal{C}}
\def\cJ{\mathcal{J}}
\def\cL{\mathcal{L}}
\def\cN{\mathcal{N}}
\def\w{\omega}
\def\xkm2{\overline{X}_{k-2}}
\begin{document}

\title[Stability and Existence of Surfaces]{Stability and Existence of Surfaces in   Symplectic 4-Manifolds with $b^+=1$}
\author{Josef G. Dorfmeister}
\thanks{JD was partially supported by the Simons Foundation $\#$246043.}
\address{Department of Mathematics\\ North Dakota State University\\ Fargo, ND 58102}
\email{josef.dorfmeister@ndsu.edu}
\author{Tian-Jun Li}
\thanks{TJL was partially supported by was partially supported by NSF Focused Research Grants DMS-0244663 and NSF grant DMS-1207037}
\address{School  of Mathematics\\  University of Minnesota\\ Minneapolis, MN 55455}
\email{tjli@math.umn.edu}
\author{Weiwei Wu}
\thanks{WW was partially supported by NSF Focused Research Grants DMS-0244663 and AMS-Simons travel funds.}
\address{Department of Mathematics\\  Michigan State University\\ East Lansing, MI 48910}
\email{wwwu@math.msu.edu}

\begin{abstract}
We establish various stability results for symplectic surfaces
in symplectic $4-$manifolds with $b^+=1$.     These results are then applied to prove the existence of representatives of Lagrangian ADE-configurations as well as to classify  negative symplectic spheres  in symplectic $4-$manifolds with $\kappa=-\infty$.  This involves the explicit construction of spheres in rational manifolds via a new construction technique called the tilted transport.
\end{abstract}
\maketitle

\tableofcontents

\section{Introduction}

Given a symplectic manifold $(M, \w)$, it is natural to
ask whether  a homology/cohomology class $A$ is represented by an embedded
symplectic (Lagrangian) submanifold.
Even with the various advanced techniques currently available and emerging nowadays, this remains a very difficult
question.  There are two particularly significant techniques in this direction:  for the classes $l[\w]$ when $[\w]$ has integral period and $l$ is a sufficiently high multiple, a general
existence was   obtained by  Donaldson's asymptotic holomorphic section
theory;
for a homology class $A\in H_2(M,\mathbb Z)$ which  is Gromov-Witten effective,  the  pseudo-holomorphic curve
machinery often produces embedded  symplectic representatives in this
class.   In dimension $4$,  Taubes' symplectic Seiberg-Witten theory  \cite{T,T1,T4} is especially powerful to establish the GW effectiveness.

In the current paper, we investigate cases  in dimension $4$
not covered by
the techniques mentioned above, e.g. we consider GW non-effective (or more precisely,
not necessarily GW effective) classes.
In fact, we approach this problem by answering a
natural extension that is closely related but rarely seen in the
 literature:

\begin{center}If there exists $V\subset M$ which is an $\w$-symplectic submanifold, can
$V$ be ``propagated" to other symplectic forms $\wt\w$?\end{center}

Such ``propagation" can be interpreted in different senses.  For
example, when $\wt\w$ is isotopic to $\w$, this problem has no new
content due to Moser's theorem. The main case we consider is when
$\wt\w$ is \textit{deformation equivalent} to $\w$, that is, when
there is a smooth family of symplectic forms $\{\w_t\}$ such that
$\w_0=\w$ and $\w_1=\wt\w$. When such propagation holds, we say
$(M,V,\w)$ possesses the \textit{stability property}.

In this paper we  establish  several stability results for connected
symplectic surfaces in  symplectic manifolds $(M, \omega)$ with
$b^+=1$, which allow us to address also the existence problem in
rather general settings. To explain further our results, we first
introduce some notions.

\begin{definition}\label{d:ConfR} Consider a graph $G$, where
each vertex $v_i$ is labelled by an element $A_i\in H_2(M,\Z)$, and
we denote $a_{ij}:=A_i\cdot A_j\ge0$.  Two vertices are  connected by edges
labeled by positive integers which sum up to $a_{ij}$ when
$a_{ij}\ne0$ . Assume that $|G|<\infty$.  We will refer to $G$ as a
\textbf{homological configuration}. Let $\omega$ be a symplectic
structure on $M$.
\begin{enumerate}
\item $G$ is called {\bf simple} if all labels on the edges are $1$.
\item $\w$ is called {\bf $G$-positive} if $\w(A_i)>0$ for all $i\le |G|$.

\item A curve configuration $V=\bigcup_{i=1}^{|G|}V_i$ is a
{\bf realization of the homological configuration $G$} with
respect to $\w$, if it consists of the following:

\begin{enumerate}
\item a one-one correspondence from the vertices $\{v_i\}$ of $G$ to an embedded $\w$-symplectic curve
$V_i\subset M$, $[V_i]=A_i$ for each $i\le |G|$ where $A_i$ is the
homology class labeled on $v_i$;
\item a one-one correspondence from $V_i\cap V_j$ to the edges connecting $v_i$ and $v_j$, and
the intersection multiplicity equals the marking on the
corresponding edges;
\item $V_i\cap V_j\cap V_k=\emptyset$ for all distinct $i,j,k$ and
\item there exists an almost complex structure $J$ compatible with $\omega$ making each $V_i$ $J$-holomorphic simultaneously.
\end{enumerate}

\end{enumerate}
\end{definition}

Notice that the curve configuration need not be connected.  Moreover, the last condition ensures that all intersections of components of $V$ are isolated and positive.

We consider stability for such configurations.
\begin{definition}
A curve configuration $V$ realizing $G$ with respect to $\omega$ is called {\bf $\omega$-stable} if for any
$G$-positive symplectic form $\tilde\w$ deformation equivalent to $\w$,
 there is a curve configuration $\tilde V$ realizing $G$ with respect to $\tilde\w$.
\end{definition}

   In some cases the relation
between $\tilde V$ and $V$ can be made more precise.  The following
is the main stability result.

\begin{theorem}\label{stability}Let $(M,\omega)$ be a symplectic manifold with $b^+=1$ and $G$ a
homological configuration represented by  a curve configuration $V$.
Then $V$ is $\omega$-stable. Moreover, $\tilde V$ can be chosen to be smoothly isotopic to $V$.
\end{theorem}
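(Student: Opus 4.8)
\emph{Choosing the path.} The plan is to connect $\w$ to $\tilde\w$ through a path of $G$-positive symplectic forms and then to show, by a continuation argument, that realizability of $G$ by a configuration smoothly isotopic to $V$ persists along the whole path. Since $\w$ and $\tilde\w$ are deformation equivalent, their classes lie in the same symplectic cone of $(M,\w)$; as $b^+=1$, this cone sits inside one convex component of the positive cone $\{e\in H^2(M;\R):e^2>0\}$. The straight segment $e_t=(1-t)[\w]+t[\tilde\w]$ therefore satisfies $e_t^2>0$, and since each pairing $e\mapsto e\cdot A_i$ is affine and positive at both endpoints it stays positive along the segment; after a small perturbation avoiding the finitely many exceptional walls (keeping the open condition of $G$-positivity) I obtain a path of $G$-positive symplectic classes, which I realize by a smooth family $\{\w_t\}_{t\in[0,1]}$ of $G$-positive forms deformation equivalent to $\w$ with $\w_0=\w$ and $\w_1=\tilde\w$. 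It then suffices to show that
\[
T=\{t\in[0,1]:\ G\text{ is realized with respect to }\w_t\text{ by a configuration smoothly isotopic to }V\}
\]
is open, closed and nonempty; since $0\in T$, this forces $T=[0,1]$, and $1\in T$ is exactly the assertion (including the ``moreover'').

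\emph{Openness.} Suppose $t\in T$, realized by $V_t=\bigcup_iV_{t,i}$ with compatible $J_t$. Being symplectic, being embedded, meeting with the prescribed positive multiplicities and with no triple points are all open conditions, and the existence of a simultaneously compatible $J$ is preserved under small perturbation. Hence the same underlying configuration, after an arbitrarily small ambient isotopy, realizes $G$ for all $\w_s$ with $s$ near $t$ and remains isotopic to $V_t$, so these $s$ also lie in $T$.

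\emph{Closedness.} This is the crux and the main obstacle. Take $t_n\to t_\infty$ with $t_n\in T$, and view $V_{t_n}$ as a reducible $J_{t_n}$-holomorphic curve. Since $t$ ranges over the compact interval $[0,1]$ and each $\w_t$ is $G$-positive, the areas $\w_t(A_i)$ are bounded above and bounded away from $0$, so Gromov compactness produces in each class $A_i$ a limiting stable $J_{t_\infty}$-holomorphic map. The entire difficulty is to prove that no degeneration occurs: one must rule out sphere bubbling, node formation, multiple covers, the splitting of a class $A_i$ into lower-energy pieces (e.g.\ $A_i=B+E$ with $E$ exceptional), and the merging of distinct components, so that the limit is again an embedded configuration realizing $G$. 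Here the $b^+=1$ hypothesis is decisive: positivity of intersections together with the adjunction formula restricts the possible limit components, and Taubes' Seiberg--Witten theory, through the classification of effective and negative classes, excludes the spurious bubbles and forces each $A_i$ to retain a single embedded representative. This is where the individual surface-stability lemmas of the paper enter. Once non-degeneration is established, $V_{t_n}$ is $C^\infty$-close to the limit $V_{t_\infty}$ for large $n$, hence ambient isotopic to it; as $V_{t_n}$ is isotopic to $V$, so is $V_{t_\infty}$, giving $t_\infty\in T$.

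Openness, closedness and nonemptiness of $T\subseteq[0,1]$ yield $T=[0,1]$, so $\tilde V:=V_1$ realizes $G$ with respect to $\tilde\w$ and is smoothly isotopic to $V$. I expect all the real work to sit in the closedness step, namely the control of Gromov degenerations via positivity of intersections, adjunction, and the $b^+=1$ Seiberg--Witten structure.
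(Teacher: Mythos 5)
There is a genuine gap, and it sits exactly where you predicted: the closedness step. Your argument asks Gromov compactness plus Taubes--Seiberg--Witten theory to force each class $A_i$ to retain an embedded pseudoholomorphic representative in the limit. But the whole point of this theorem is that the classes $A_i$ need \emph{not} be GW-effective: the paper explicitly allows components with $k([V_i])<0$ (negative Gromov--Taubes dimension), e.g.\ $(-4)$-spheres. For such a class a generic compatible $J$ carries \emph{no} pseudoholomorphic representative at all, so there is no wall-crossing or SW nonvanishing statement available to rule out that the limit of the $V_{t_n,i}$ degenerates into multiply covered or nodal pieces, or that the component in class $A_i$ disappears entirely. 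Positivity of intersections and adjunction constrain what a limit can look like only if a nontrivial irreducible limit in class $A_i$ is already known to exist; they do not produce one. So the continuation scheme cannot be closed as written.

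The paper's proof avoids controlling the curves along the path altogether. It keeps the configuration $V$ \emph{fixed} and moves the symplectic form around it: Lemma~\ref{rel submanifold} uses Taubes' theory to produce an auxiliary connected symplectic surface in a highly positive (GT-basic) class $A=le-PD[\omega]$ meeting $V$ positively and $\omega$-orthogonally, and inflation along it (via Gompf's symplectic sum) shows that every class in $\mathcal D_{K_\omega}^V$ is realized by a form keeping $V$ a curve configuration (Theorem~\ref{relcone}). In particular $[\tilde\omega]$ is so realized by some $\tau$ deformation equivalent to $\omega$. Then McDuff's theorem that on a $b^+=1$ manifold cohomologous, deformation-equivalent forms are isotopic, together with Moser's lemma, transports $V$ to a $\tilde\omega$-configuration smoothly isotopic to $V$. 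Note that the SW input is applied only to the auxiliary positive class, never to the $A_i$ themselves; that is the structural difference your proposal misses.
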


 At the core of Theorem \ref{stability}  are the existence  and abundance of  positive self-intersection
 symplectic surfaces along which inflation is carried out.
 The major source of such surfaces is Taubes' symplectic Seiberg-Witten theory.
 Moreover, the methods employed to prove Theorem \ref{stability} are rather robust and
 allow extensions in a number of directions.  We describe details in Sections \ref{s:tech} and \ref{nodal}.

In the rest of the paper we consider two applications of the
stability result Theorem \ref{stability}.  The first one is to show the
following:

\bcor\label{c:ADELag} In rational or ruled manifolds, any
homological Lagrangian ADE-configuration $\{l_i\}_{i=1}^n$ admits a
Lagrangian ADE-configuration representative. In the case of
$A_n$-configurations, one may require the configuration lie in
$M\backslash D$, where $D$ is a symplectic divisor disjoint from a
set of embedded symplectic representatives of the exceptional classes $\{E_i\}_{i=1}^{n+1}$\ecor

The definition of a \textit{homological Lagrangian configuration} is
given in Section \ref{s:ADE}.  For more general symplectic manifolds with $b^+=1$ we have:

\bcor\label{c:An-b1} Given a non-minimal symplectic $4$-manifold $(M,\w)$ with
$b^+=1$ and a set of exceptional classes $\{E_i\}_{i=1}^{n+1}$ where $\w(E_i)$ are
all equal.  Then there is a Lagrangian $A_n$-configuration of class
$\{E_i-E_{i+1}\}_{i=1}^n$.

\ecor

It is very tempting to assert the above corollary also holds for general symplectic
$4$-manifolds.  But there is a (possibly technical) catch: in general we do not know whether
two ball embeddings in a general symplectic $4$-manifold are connected.  That means
it is possible that two symplectic blow-up forms are not even symplectomorphic.  Therefore
one needs to be more precise when performing symplectic blow-ups on these manifolds.

Recall from Biran's stability of ball-packing in dimension $4$ (\cite{Bi99}) that  for any symplectic $4-$manifold $(M,\w)$,
there exists a number $N_0(M,\w)$
so that one may pack $N$ balls with volume less than
$vol_{\w}(M)/N_0$, as long as $[\w]\in H^2(M,\Q)$. The packing is
constructed away from a \textit{isotropic skeleton} defined in
\cite{Bi01}.  We show that $A_n$-type configurations still exist
 when these packed symplectic balls are blown-up.

\bcor\label{c:An} Given a symplectic $4$-manifold $(M,\w)$ with
$[\w]\in H^2(M,\Q)$ and a symplectic packing of $n+1\le N_0$
symplectic balls with equal volume $\le vol_\w(M)/N_0(M,\w)$.  Then there
is a Lagrangian $A_n$-configuration in
$(M\#(n+1)\overline{\CP^2},\w')$, where $\w'$ is obtained by blowing
up the embedded symplectic balls when the packing is supported
away from Biran's isotropic skeleton.

\ecor

\brmk

The more interesting part of this series of corollaries lies in the
case when the packing of $M$ is very close to a full packing. In
such scenarios, the geometry of the packing is usually difficult to
understand in an explicit way.  In particular, it would be very
difficult to construct these Lagrangian spheres by hand.  In
contrast, our theorem does not only guarantee the existence of the
Lagrangian spheres (which already appeared in \cite{LW}), we also
have control over their geometric intersection patterns, which is
usually difficult for Lagrangian or symplectic non-effective
objects.  From our proof, one may also conclude the existence of
symplectic ADE-plumbings when the involved classes have positive
symplectic areas (in fact, this is much easier because we do not need to
involve conifold transitions and can easily be extended to many
other types of plumbings). We leave the details for interested
readers.

\ermk


As another application, we consider the classification of negative self-intersection
spherical classes in symplectic rational or ruled surfaces.  This is
of interest for many different reasons: on the one hand, solely the
problem of existence of symplectic rational curves is already an
intriguing question when the corresponding class is not GT-basic,
which means its Gromov-Taubes dimension is less than zero.  However,
such curves are exactly the most interesting objects in many areas
of research.  For example, they span the Mori cone in birational
geometry, which has been extended into the symplectic category.

Moreover, such rational curves and the configurations they
form are crucial for various constructions in symplectic geometry (
for a very incomplete list, see  \cite{FS2},  \cite{S, S2},  \cite{SSW}, \cite{AP}, \cite{P2}).
As we will describe, we have found (-4)-symplectic
spheres in $\mathbb CP^2\#10\overline{\mathbb CP^2}$ along which a rational
blow-down incurs exotic examples of $E(1)_{2,k}$.  This
will be exploited further in our upcoming work [DLW].

In our approach, we also solved the problem of classifying homology
classes of \textit{smooth} embedded $(-4)$-spheres in rational
manifolds, which, to the best of authors' knowledge, is also new to
the literature.

To state our result,  denote the geometric automorphism group by
  \begin{equation}\label{e:D(M)}D(M)=\{\sigma\in Aut(H_2(M,\mathbb Z)): \sigma=f_*\text{ for some }f\in \textit{Diff}^+(M)\}.\end{equation}
Two classes $A,B\in H_2(M,\mathbb Z)$ are called {\it $D(M)$-equivalent} if there is a $\sigma\in D(M)$ such that $\sigma(A)=B$.

The following is our main result in Section \ref{s-4}:

\begin{theorem}[Classification of $-4$-spheres]\label{t:spheres}   Let
 $(M,\omega) $ be  a rational symplectic surface, i.e. $M=\mathbb CP^2\#k\overline{\mathbb CP^2}$ and $\{H,E_1,...,E_k\}$ the standard basis of $H_2(M,\mathbb Z)$.
 Consider any class $A\in H_2(M,\mathbb Z)$ with $A\cdot A=-4$.

\begin{itemize}

\item (Smooth case) $A$ is represented by a smooth sphere if and only if  $A$ is D(M)-equivalent to one class in the following list
   \begin{enumerate}

      \item $-H+2E_1-E_2$
      \item $H-E_1-..-E_5$
      \item $-a(-3H+\sum_{i=1}^9 E_i)-2E_{10}$ for some $a\in \mathbb N$ and $a\ge 2$
      \item $2E_1$
      \item $2(H-E_1-E_2)$

   \end{enumerate}

\item (Symplectic case) $A$ is represented by an
 $\omega-$symplectic sphere if and only if $A$ is represented by a
 smooth sphere, $[\omega]\cdot A>0$ and $K_\omega\cdot A=2$ ($K_\omega$ is the symplectic canonical class associated to  $\omega$).

 Moreover, up to $D(M)$-equivalence, the class $A$
 is one of the following:

 \begin{enumerate}[(i)]

   \item If $A$ is characteristic, then $k=5$ and $A$ is equivalent to type (2) above.
   \item If $A$ is not characteristic, then it is equivalent to either type (1) or type (3) above.
    \end{enumerate}
  \end{itemize}

\end{theorem}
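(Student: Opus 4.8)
The plan is to reduce the entire statement to the orbit structure of the geometric automorphism group $D(M)$ acting on the set of $(-4)$-classes, and to isolate two $D(M)$-invariants that control everything: the adjunction pairing with the canonical class and a finer Weyl-orbit datum. The starting structural input is the determination of $D(M)$ for rational surfaces: it is generated by permutations of $\{E_1,\dots,E_k\}$, reflections in $(-2)$-classes, the standard Cremona (quadratic) transformation, and $-\mathrm{id}$; in particular every $\sigma\in D(M)$ satisfies $\sigma(K_0)=\pm K_0$, where $K_0=-3H+\sum_i E_i$. The crucial negative fact, which I would take from the work of Friedman--Morgan and Li--Liu, is that reflections in individual $(-1)$-classes are \emph{not} realized, so $D(M)$ is a proper subgroup of the full lattice isometry group $O(Q_M)$ and individual sign changes $E_i\mapsto -E_i$ are unavailable. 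Since a diffeomorphism carries a smoothly embedded sphere to one in the image class, smooth-sphere-representability is a $D(M)$-invariant, and because $\sigma(K_0)=\pm K_0$ the quantity $|K_0\cdot A|$ is also a $D(M)$-invariant; these two observations are what let the normal forms (1)--(5) survive as genuinely distinct orbits (e.g. the family (3) is a one-parameter family of orbits precisely because $a$ cannot be altered without an illegal sign flip).

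\textbf{Smooth case.} For necessity I would first invoke the smooth adjunction inequality from Seiberg--Witten theory for $b^+=1$ (with wall-crossing): a genus-$0$ embedded surface in class $A$ forces $-2\ge A^2+|K_{SW}\cdot A|$, hence $|K_0\cdot A|\le 2$. I would then run a Cremona reduction that stays inside $D(M)$ (permutations, $(-2)$-reflections, quadratic transformations, and $-\mathrm{id}$ only, never individual $E_i$-reflections) to bring $A$ to a normal form with $a\ge b_1\ge b_2\ge\cdots$ and $a\ge b_1+b_2+b_3$, and read off the reduced shapes compatible with $A^2=-4$, $|K_0\cdot A|\le 2$ and the divisibility ($\mathrm{div}(A)\in\{1,2\}$, since $\mathrm{div}^2\mid 4$). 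The surviving normal forms are exactly (1)--(5): the divisible ones are twice a $(-1)$-class, split into the non-characteristic ($2E_1$, type (4)) and characteristic ($2(H-E_1-E_2)$ at $k=2$, type (5)) sub-orbits; the primitive ones give (1), (2) and the $a$-family (3). For sufficiency I would exhibit spheres directly: types (1),(2),(3) carry symplectic (hence smooth) sphere representatives for suitable deformation-equivalent forms, produced by explicit construction, while the divisible types (4),(5) are realized by the local model of a smooth conic in the total space of $\mathcal O(-1)$ over a $(-1)$-sphere, i.e. twice a $(-1)$-sphere is represented by an embedded sphere.

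\textbf{Symplectic case.} The forward direction is immediate: a symplectic sphere has positive area, and symplectic adjunction $2g-2=K_\omega\cdot A+A^2$ with $g=0$, $A^2=-4$ gives $K_\omega\cdot A=2$. For the converse, given a smooth sphere in $A$ with $[\omega]\cdot A>0$ and $K_\omega\cdot A=2$, I would normalize $K_\omega=K_0$ by an element of $D(M)$, observe that the orbit then contains a representative with $K_0\cdot A=2$ and positive $\omega$-area, deform $\omega$ inside the (connected) symplectic cone of the rational surface to a form admitting the standard sphere in that class, and propagate back to $\omega$ using Theorem \ref{stability} applied to the one-vertex configuration of class $A$ (the hypothesis $\omega(A)>0$ is exactly $G$-positivity). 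To finish the enumeration I would note that types (4),(5) are excluded: their $D(M)$-orbit only achieves $K_0\cdot A=2$ after applying $-\mathrm{id}$, which simultaneously reverses the sign of the area and violates $[\omega]\cdot A>0$. This leaves (1),(2),(3), which I split by the characteristic dichotomy: if $A$ is characteristic then van der Blij's congruence $A^2\equiv\sigma(M)\pmod 8$ gives $-4\equiv 1-k\pmod 8$, i.e. $k\equiv 5\pmod 8$, and a light-cone estimate using $[\omega]^2>0$, $[\omega]\cdot A>0$, $K_0\cdot A=2$ rules out $k=13,21,\dots$, forcing $k=5$ and type (2), which is statement (i); the non-characteristic classes are (1) or (3), which is statement (ii).

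\textbf{Main obstacle.} The genuinely hard part is the necessity direction of the smooth case, namely ruling out the orbits that satisfy $|K_0\cdot A|\le 2$ yet carry no smooth sphere, the simplest being $E_1+E_2-E_3-E_4$, which has $A^2=-4$, $K_0\cdot A=0$, is $O(Q_M)$-equivalent to the type-(1) class but not $D(M)$-equivalent to it. Handling this requires (a) the precise determination of $D(M)$ (so that no stray isometry merges this orbit with a genuine sphere orbit), and (b) a non-existence statement for smooth spheres in such orbits, which I expect to need a sharper, chamber-dependent adjunction using the full family of $b^+=1$ Seiberg--Witten basic classes produced by wall-crossing rather than $K_0$ alone. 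Controlling the interaction between the Cremona reduction and these chamber choices, while keeping everything inside the realized group $D(M)$, is the technical heart of the argument.
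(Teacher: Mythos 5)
Your overall skeleton (normalize by $D(M)$, impose an adjunction-type bound, enumerate normal forms, then use stability to pass from one symplectic form to all deformation-equivalent ones) matches the paper's strategy, but the proposal has three genuine problems. First, your ``crucial negative fact'' is false: reflections in the individual $(-1)$-classes $E_i$ \emph{are} realized by diffeomorphisms, and the paper's Lemma \ref{trans} (quoting \cite{TJLL}, \cite{TJLL2}) states explicitly that $D(M)$ is generated by reflections along $E_i$ and $H-E_i-E_j-E_k$. Consequently your proposed ``main obstacle'' evaporates: the class $E_1+E_2-E_3-E_4$ becomes $(0,1,1,1,1,0,\dots)$ after reflecting in $E_1,E_2$, which the proof of Lemma \ref{l:hom} shows is $D(M)$-equivalent to $-H+2E_1-E_2$, so it \emph{is} represented by a smooth sphere. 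The distinctness of the orbits $(1)$--$(5)$ and the $a$-parameter in family $(3)$ are not protected by unavailability of sign flips; they are controlled by the symplectic genus, which is the $D(M)$-invariant the paper actually uses (note also that $|K_0\cdot A|\le 2$ for the standard $K_0$ is not the right obstruction: the paper's example $e=(11,6,6,6,1,\dots,1)$ satisfies the spherical adjunction equality for some $K\in\mathcal K$ yet carries no smooth sphere; one must maximize $\eta_K(A)$ over $\mathcal K_A$, and $K_{st}\in\mathcal K_A$ is only guaranteed for \emph{reduced} classes, Lemma \ref{l:Kbound}).

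Second, the step ``read off the reduced shapes compatible with $A^2=-4$ and the adjunction bound'' is precisely where the real work lies, and you supply no argument. A reduced class with $A^2=-4$ and $K_{st}\cdot A\le 2$ lives in $k\ge 10$ unknowns, and showing that no such class exists for $k\ge 11$, and that for $k=10$ only the family $-a(-3H+\sum_{i=1}^9E_i)-2E_{10}$ survives, occupies Lemma \ref{l:reduce}, Proposition \ref{t:-4smooth} and Lemmata \ref{l:b=1}, \ref{l:b=2} (a convexity/rearrangement argument over $\R^{k+1}$ followed by a minimal-genus computation in $\CP^2\#9\overline{\CP^2}$). Third, for sufficiency you assert that types $(1)$--$(3)$ ``carry symplectic sphere representatives \dots produced by explicit construction,'' but type $(3)$ is not GW-effective and its realization is the other main point of the paper: the tilted transport of Section \ref{s:Con-4}, which builds an immersed symplectic sphere in class $-aK$ in $E(1)$ and blows up its double point to obtain $-aK-2E_{10}$ (Lemma \ref{l:reduction2b}). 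Without these two ingredients the classification is asserted rather than proved. The remaining parts of your symplectic case (adjunction forcing $K_\omega\cdot A=2$, reduction to $K_{st}$, stability, and exclusion of types $(4)$, $(5)$ --- which the paper does by noting $K_\tau\cdot E_1=1$ forces $-E_1\in\mathcal E_\tau$ and hence $[\tau]\cdot 2E_1<0$) are essentially in line with the paper, although your van der Blij plus light-cone detour for the characteristic case is unnecessary once one observes that being characteristic is a $D(M)$-invariant and only the normal form $H-E_1-\dots-E_5$ with $k=5$ is characteristic.
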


Notice that for large enough $k$, some of the classes listed above are in fact pairwise $D(M$)-equivalent.
For completeness we will also give an overview of symplectic spheres
of square $-1,-2,-3$ in rational manifolds.  For those of squares
$-1,-2$ the results are essentially contained in the earlier works
\cite{TJLL} , \cite{L1} and \cite{LW}.  We also provide an explicit
algorithm in Remark \ref{r:algorithm} to implement our results.

Our result should be considered preliminary as it leads to more
interesting questions in two rather different directions. On the one
hand, recall the bounded negativity conjecture asserts that any
algebraic surface in characteristic zero has $C^2\ge n_X$ for any
prime divisor $C\subset X$ and some fixed $n_X\in\mathbb Z$. (for
accounts on this conjecture in complex geometry, see for example
\cite{H}, \cite{BH}, see also \cite{BB} for variations on this
conjecture).  Note that even for rational manifolds, only certain
small ones are known to satisfy this conjecture.

The conjecture makes perfect sense in the
symplectic category, that is, whether squares of symplectic curves in a given symplectic manifold are bounded from below.  For example, Lemma \ref{irr} partially reproduces the boundedness result of
Prop. 2.1 in \cite{BH}.  The computation relying on symplectic genus in \ref{s:-4red} shows some preliminary dichotomy patterns for a
negative curve: it is either not reduced (Section \ref{s-4}) and can be understood in small blow-ups, or its class is reduced
but only appears for a relatively large number of blow-ups.  For example, our result says $(-4)$-spheres can either be equivalent
to a curve in two blow-ups, or its class may only appear when the blow-up number hits $10$.
It seems reasonable to speculate that this continues to hold at least for
$(-n)$-spheres-there might be more complicated classes that are not blow-ups of classes in our list, but
they only show up when the blow-up numbers are large enough, hence for a fixed symplectic rational manifold, there
are only finitely many such hierarchies.  If this could be verified, one could possibly approach the case of higher genus
with similar methods.  This will be investigated in future work.


On the other hand, the existence part of Theorem \ref{t:spheres}
requires a new technique, which we call the \textit{tilted
transport}. This is very similar to the usual parallel transport
construction of Lagrangian submanifolds, but due to the ``softer"
nature of symplectic objects, this construction is also much more
flexible and even can be formulated quite combinatorially.  Moreover,
this simple technique could lead to the construction of a wealth of
symplectic submanifolds that are not GW-effective out of a Lefschetz
fibration, thus should be of independent interest. We describe this
construction in Section \ref{s:Con-4} and apply it to construct
$(-4)$-spheres in our classification.


Similar to the case of rational manifolds, we obtain for irrational
ruled manifolds the following more complete classification.  A
corresponding characterization in the smooth category appears as Lemma
\ref{irr}.

\begin{theorem}\label{irratspec}
Suppose $(M, \omega)$ is an irrational ruled $4-$manifold. Let $A\in
H_2(M,\mathbb Z)$ with $\omega(A)>0$.
  Then $A$ is represented by a connected $\omega-$symplectic sphere if and only if
$A$ is represented by a connected smooth sphere and
$g_{\omega}(A)=0$.

Moreover, suppose  $A$ is represented by a connected $\omega$-symplectic sphere. Then
\begin{enumerate}
\item $A\cdot A \geq 1 -  b^- (M)$.

\item   $A$ is characteristic only if $A \cdot A = 1- b^-(M)$, and $A$ is $D(M)-$equivalent to $E_1-E_2-\cdots -E_{1-b^-(M)}$.

\item   If $A$ is not characteristic, then $A$ is $D(M)-$equivalent to $F-E_1-\cdots- E_{l}$ for $l=-A\cdot A$.

\end{enumerate}
%
%
Moreover, when $A$ is not characteristic, then it is the blow-up of
an exceptional sphere.
\end{theorem}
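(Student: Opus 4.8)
The necessity of the two conditions is immediate: a connected symplectic sphere is in particular a connected smooth sphere, and the adjunction equality for embedded symplectic surfaces forces its symplectic genus to vanish, so $g_\omega(A)=0$. The substance of the theorem is therefore the converse together with the classification, and the plan is to reduce everything to a short list of standard classes on which an explicit symplectic sphere can be exhibited, and then to let Theorem~\ref{stability} propagate it to the given form. The first step is to invoke the smooth classification of Lemma~\ref{irr}, which expresses any smooth spherical class, up to the $D(M)$-action, as either the characteristic class $E_1-E_2-\cdots-E_{b^-(M)-1}$ or a non-characteristic class $F-E_1-\cdots-E_l$ with $l=-A\cdot A$. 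Applying a diffeomorphism $f$ with $f_*(A_0)=A$ and replacing $\omega$ by $f^*\omega$, one checks that $g_\omega$, the sign of $\omega(A)$, and the self-intersection are all preserved, so it suffices to produce a symplectic sphere in a standard class $A_0$ for some symplectic form.

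I would then construct the base case directly. Starting from a symplectic form realizing the ruling and blowing up $l$ of the $k=b^-(M)-1$ points on a single fiber (the remaining points generic), the proper transform of that fiber is an embedded symplectic sphere in the class $F-E_1-\cdots-E_l$; the characteristic class is obtained analogously as the iterated proper transform of the exceptional sphere $E_1$ under blow-ups at infinitely near points. This exhibits a realization of the one-vertex configuration $G=\{A_0\}$ for an explicit blow-up form $\omega_{\mathrm{st}}$, and the existence then follows from Theorem~\ref{stability}: one only needs that $f^*\omega$ is $G$-positive, which is exactly $\omega(A)>0$, and deformation equivalent to $\omega_{\mathrm{st}}$. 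Here I would appeal to the strong uniqueness of symplectic structures on ruled surfaces: the canonical class is unique up to sign and the $D(M)$-action, and the symplectic cone belonging to a fixed canonical class is connected, so after absorbing one further element of $D(M)$ into $f$ the two forms lie in the same deformation class. Theorem~\ref{stability} then yields a symplectic sphere in $A_0$ for $f^*\omega$, and pushing forward by $f$ gives the desired representative of $A$.

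The numerical assertions (1)--(3) are read off from this reduction. Since $D(M)$ preserves both the intersection form and $b^-(M)$, the bound $A\cdot A\ge 1-b^-(M)$ is the direct computation $-l\ge -(b^-(M)-1)$ for the standard classes, with equality forced in the characteristic case by the mod-$2$ constraint $A\equiv K_\omega$, which can only be met using all $b^-(M)-1$ exceptional classes. The splitting into (i) characteristic and (ii) non-characteristic is exactly this mod-$2$ dichotomy, combined with the adjunction identity $K_\omega\cdot A=-2-A\cdot A$ and positivity $\omega(A)>0$. Finally, for a non-characteristic class $F-E_1-\cdots-E_l$ with $l\ge 1$, the sub-class $F-E_1$ has square $-1$ and $A_0$ is its iterated proper transform; this exhibits $A$ as the blow-up of an exceptional sphere, and the property is $D(M)$-equivariant.

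The main obstacle is the matching step in the second paragraph. The diffeomorphism furnished by Lemma~\ref{irr} standardizes only the homology class, not the symplectic form or its canonical class, so the delicate point is to guarantee that $f^*\omega$ genuinely lies in the deformation class of the explicit blow-up form $\omega_{\mathrm{st}}$ rather than in some a priori different component of the symplectic cone. This is where the classification of symplectic structures on irrational ruled surfaces is indispensable, and verifying that the required $D(M)$-adjustment can be made compatibly with $G$-positivity is the step I expect to require the most care.
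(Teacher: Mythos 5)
Your proposal is correct and follows essentially the same route as the paper: the smooth classification of Lemma \ref{irr}, a $D(M)$-reduction to the standard classes $E_1-\cdots-E_{b^-(M)-1}$ and $F-E_1-\cdots-E_l$, an explicit blow-up construction of symplectic spheres in those classes for a standard form, and propagation to the given $\omega$ via the stability results together with the fact that for $\kappa=-\infty$ the canonical class determines the deformation class (the paper packages these last two steps as Proposition \ref{irr-blowdown} and Corollary \ref{ex}/Lemma \ref{l:3.11}). The "delicate matching step" you flag is resolved exactly as you anticipate, by transitivity of $D(M)$ on canonical classes (Lemma \ref{trans}) and the deformation equivalence of cohomologous-canonical-class forms from \cite{MP94}.
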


 As stated above,
one motivation for investigating stability is to find a general
existence criterion for connected embedded symplectic surfaces in a
given homology class (as discussed in the survey \cite{L2}, see also \cite{LZ}) although
we have generalized the context to symplectic configurations. The results above lead us to offer the following speculation.

\begin{speculation}\label{spec:isphere}Let $(M,\omega) $ be a symplectic $4$-manifold.
  Let $A\in H_2(M,\mathbb Z)$ be a homology class.
  Then $A$ is represented by a connected $\omega$-symplectic surface if and only if \begin{enumerate}
\item $[\omega]\cdot A>0$,
\item $g_\omega(A)\geq 0$ and
\item $A$ is represented by a smooth connected surface of genus $g_{\omega}(A)$.
\end{enumerate}

\end{speculation}

Theorem \ref{t:spheres} thus verifies this speculation for spheres
in rational manifolds  with $A^2\ge-4$ and Theorem \ref{irratspec}
for all spheres in irrational ruled surfaces. This is in a sense by
``brutal force": we give a complete classifications of the smoothly
representable and symplectically representable classes and compare
them.  It would be interesting to have a construction independent of
these classification results.\\


\noindent\textit{Outline of the paper:} In Section \ref{s:tech} and
\ref{nodal} we establish the stability result \ref{stability}.
Section \ref{s:tech} contains some technical tools useful for
finding symplectic submanifolds and inflations adapted from
\cite{DL} to the configuration case.  In Section \ref{s:ADE} we
consider the stability and existence for Lagrangian configurations.
Section \ref{s-4} classifies $(-4)$-spheres in both smooth and
symplectic categories, with a subsection specifically devoted to
tilted transports.  Section \ref{class} completes the discussion for
symplectic manifolds with
$\kappa(M)=-\infty$ by considering irrational ruled manifolds.\\

\noindent\textit{Notation:} Let $M=\mathbb CP^2\#k\overline{\mathbb CP^2}$.  We use the  standard basis for
 $H_2(M,\mathbb Z)$ given by $\{H,E_1,\dots,E_k\}$.
 Denote by $K_{st}=-3H+\sum_{i=1}^k E_i$ the standard canonical
 class.
 Similarly we use the standard basis $\{A,B\}$ for $H_2(S^2\times S^2,\mathbb Z)$.

 For non-minimal irrational ruled surfaces $M$, we use $\{S,F,E_1,\dots, E_n\}$ as the basis of $H_2(M,\Z)$,
 where $S$ denotes the class of the base and $F$ the class of the
 fiber.\\

\noindent\textit{Acknowledgements:}  The third author is grateful to Ronald Fintushel for introducing him to the problem of bounded negativity, and Kaoru Ono for explaining patiently many details regarding
 conifold transitions.  He would also like to  warmly thank Selman Akbulut for
 his interest in this work and offering an opportunity to present it in the Topology seminar at MSU.

\section{A Technical Existence Result}\label{s:tech}

In this section we wish to extend Theorem 2.13, \cite{DL}, to the
more complicated curve configurations of Def \ref{d:ConfR}.  The key
to the proof of Theorem 2.13 is Lemma 2.14 therein, which provides
for the existence of a curve in a given class $A\in H_2(M;\mathbb
Z)$ under certain restrictions on $A$.  At the core of the proof of
this lemma are results on the existence of a suitably generic almost
complex structure among those making a fixed submanifold $V$
pseudoholomorphic such that classes $A$ with negative Gromov
dimension are not represented by pseudoholomorphic curves.


Consider a curve configuration $V=\cup_{i=1}^k V_i$. Let $\mathcal
J_{V_i}$ denote the set of almost complex structures compatible with
$\omega$ and making $V_i$ pseudoholomorphic and let $\mathcal
J_V=\cap_i\mathcal J_{V_i}$. Notice that $\mathcal J_V\ne\emptyset$
by Def \ref{d:ConfR}.

\subsection{Generic Almost Complex Structures}

We begin by defining a universal space which we shall use throughout
this section: Fix a closed compact Riemann surface $\Sigma$. The
universal model $\mathcal U$ is defined as follows: this space will
consist of Diff$(\Sigma)$ orbits of a 4-tuple $(i,u,J,\Omega)$ with
\begin{enumerate}
\item $u:\Sigma\rightarrow M$ an embedding off a finite set
of points from a Riemann surface $\Sigma$ such that
$u_*[\Sigma]=A\in  H_2(M,\mathbb Z)$ and $u\in W^{k,p}(\Sigma,M)$ with $kp>2$,

\item $\Omega\subset M$ a set of $k(A)=\frac{1}{2}(A\cdot A-K_\omega\cdot A)$ distinct points (with
$\Omega=\emptyset$ if $k(A)\le 0$) such that $\Omega\subset
u(\Sigma)$,

\item $i$ a complex structure on $\Sigma$ and $J\in \mathcal J_V$.
\end{enumerate}

Note that every map $u$ is locally injective, and one has a
fibration $\pi: \mathcal U\to \cJ_V$. Moreover, in order for the set
$\mathcal U$ to be of any interest, it is natural to implicitly
assume that $A\cdot [V_i]\ge 0$ for all $i\in I$ unless $A=[V_i]$
and $[V_i]\cdot [V_i]<0$.

The goal of this section is to show that for a sufficiently generic
choice of almost complex structure in $\cJ_V$ the fiber in $\mathcal
U$ either has the expected dimension or dim $\ker(\pi)=0$.  We must
distinguish two cases: If $A\ne[V_i]$, then for any point in
$\mathcal U$, $u(\Sigma)$ will contain a point not in $V$.  If
$A=[V_i]$ for some $i$, then in $\mathcal U$ we will distinguish the
embedding of $V_i$ (and possibly $V_j$ if $[V_i]=[V_j]$) from the
other points in $\mathcal U$.

It should be noted that we prescribe $V$, hence the manifolds comprising $V$ may be very poorly behaved with respect to Gromov-Witten moduli.  In particular, $k([V_i])<0$ is possible.

Consider first a class $A\ne[V_i]$.  Note that this includes, for example, the class $A=[V_i]+[V_j]$.
For such a class, any element $u\in \mathcal U$ will have a point $x_0\in\Sigma$ such that $u(x_0)\not\in V_i$.
Therefore, the proof of Lemma A.1, \cite{DL}, applies as written there, albeit with a different set of underlying almost complex structures.

\begin{lem}\label{genericA}
Let $A\in H_2(M,\mathbb Z)$, $A\ne [V_i]$ for any $i$ and
$k(A)\ge 0$. Let $\Omega$
denote a set of $k(A)$ distinct points in $M$. Denote the
set of pairs $(J,\Omega)\in \mathcal J_V\times M^{k(A)}$ by
$\mathcal I$. Let $\mathcal J_V^A$ be the subset of pairs
$(J,\Omega)$ which are nondegenerate for the class $A$ in
the sense of Taubes \cite[Def. 2.1]{T4}. Then $\mathcal
J_V^A$ is a set of second category in $\mathcal I$.
\end{lem}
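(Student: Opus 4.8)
The plan is to establish Lemma \ref{genericA} by a standard Sard--Smale transversality argument, carried out on the universal moduli space $\mathcal{U}$, adapting the proof of Lemma A.1 in \cite{DL} to the constrained almost complex structures lying in $\mathcal{J}_V$. The key point, already emphasized in the excerpt, is that since $A\neq[V_i]$ for every $i$, every element $u\in\mathcal{U}$ sends some point $x_0\in\Sigma$ to a point $u(x_0)\notin V$. This is precisely what allows us to perturb $J$ freely in a neighborhood of $u(x_0)$ while remaining inside $\mathcal{J}_V$, so the obstruction to transversality that normally arises from the fixed submanifolds $V_i$ simply does not appear.

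First I would set up the universal section $\bar{\partial}$ over the Banach manifold $\mathcal{U}\times\mathcal{J}_V$ (or more precisely over the space of tuples $(i,u,J,\Omega)$ modulo $\mathrm{Diff}(\Sigma)$), whose zero set records $(i,J)$-holomorphic maps passing through $\Omega$. I would then compute the linearization of this universal section and show it is surjective at every zero. The crucial step is the surjectivity of the linearization, and this is where the freedom to vary $J$ enters: given any element of the cokernel (a nonzero $(0,1)$-form with values in the pullback bundle, in the appropriate Sobolev completion), one uses the existence of the point $x_0$ with $u(x_0)\notin V$ and local injectivity of $u$ to construct an admissible variation $Y\in T_J\mathcal{J}_V$ supported near $u(x_0)$ that pairs nontrivially with it. Because the support is disjoint from $V$, such a $Y$ genuinely lies in the tangent space to $\mathcal{J}_V$, i.e.\ it preserves the condition that each $V_i$ remain pseudoholomorphic. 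This gives surjectivity, so the universal moduli space is a Banach manifold.

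Next I would invoke the Sard--Smale theorem applied to the projection $\pi\co\mathcal{U}\to\mathcal{J}_V$ (enlarged to include the constraint data $\Omega$, so to the projection onto $\mathcal{I}=\mathcal{J}_V\times M^{k(A)}$). Sard--Smale yields that the set of regular values is of second category, and a regular value is exactly a pair $(J,\Omega)$ that is nondegenerate for $A$ in the sense of \cite[Def.\ 2.1]{T4}, since regularity of the projection is equivalent to surjectivity of the Cauchy--Riemann operator on each fiber. A standard bootstrapping/elliptic regularity argument then upgrades from the $W^{k,p}$ setting to genuine smoothness and lets one pass from $C^\ell$ to $C^\infty$ almost complex structures via the Taubes trick of intersecting a nested countable family of second-category sets. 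This establishes that $\mathcal{J}_V^A$ is of second category in $\mathcal{I}$.

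The main obstacle, and the only place where this argument differs substantively from the classical unconstrained case, is verifying that the perturbation $Y$ used to prove surjectivity of the linearization can be chosen within $T_J\mathcal{J}_V$ rather than in the full space of $\omega$-compatible almost complex structures. This is exactly where the hypothesis $A\neq[V_i]$ is indispensable: it guarantees a point $u(x_0)$ outside $V$, around which $Y$ can be localized without disturbing the pseudoholomorphicity of any $V_i$. I expect the bulk of the genuine work to lie in confirming that this localized variation produces the required pairing against an arbitrary cokernel element, but this is precisely the content carried over verbatim from the proof of Lemma A.1 in \cite{DL}, so the remaining steps are routine.
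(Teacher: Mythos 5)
Your proposal is correct and follows essentially the same route as the paper: the paper likewise observes that $A\ne[V_i]$ forces every $u\in\mathcal U$ to have a point $x_0$ with $u(x_0)\notin V$, so that the perturbation of $J$ can be localized away from $V$ and the proof of Lemma A.1 of \cite{DL} goes through verbatim with $\mathcal J_V$ in place of the full space of compatible almost complex structures. The paper gives only this one-line reduction, whereas you spell out the Sard--Smale framework it relies on, but there is no substantive difference.
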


We remind the readers that the nondegeneracy involved in the above lemma pertains only to embedded curves.
Consider now the case $A=[V_i]$ for some $i$ (for simplicity assume $i=1$).    In this case, as noted above, $\mathcal U$ consists of two types of points:  Those which are embeddings of components of $V$ and those which contain a point not in $V$.  We concentrate first on the points corresponding to embeddings of components of $V$.

Let $j_i$ be an almost complex structure on $V_i$ and denote ${ j}=(j_1,..,j_k)$. Define
\[
\mathcal J_V^{ j}\,{=}\,\{J\in\mathcal J_V\vert J\vert_{V_i}=j_i\}\!
\]
and call any $J$-holomorphic embedding for $J\in
\mathcal J_V^{j}$ a $j$-holomorphic embedding.  Notice that
\[
\mathcal J_V=\cup_{j}\mathcal J_V^{j}
\]
and by assumption $\mathcal J_V\ne \emptyset$.  Hence for some $j$ , $\mathcal J_V^{j}\ne\emptyset$.

Consider the behavior of the linearization of $\overline\partial_{I,J}$ at a point $u\in\mathcal U$ such that $u:(\Sigma,I)\rightarrow
(M,J)$ is a $j$-holomorphic embedding of $V_1$.

\begin{lem}\label{genericV}
Let $A=[V_1]$ and fix $j$.  Fix a $j$-holomorphic embedding
$u:(\Sigma,I)\rightarrow (M,J)$ of $V_1$ (or of $V_j$ if
$[V_1]=[V_j]$).
\begin{enumerate}
\item If $k(A)\ge 0$, then there exists a set $\mathcal G_V^{j}$ of
second category in $\mathcal J_V^{j}$ such that for any
$J\in \mathcal G_V^{j}$ the linearization of $\overline
\partial_{i,J}$ at the embedding $u$ is surjective.
\item If $k(A)<0$, then there exists a set $\mathcal
G_V^{j}$ of second category in $\mathcal J_V^{j}$ such
that for any
$J\in \mathcal G_V^{j}$ the linearization of $\overline
\partial_{i,J}$ at the embedding $u$ is injective.
\end{enumerate}
\end{lem}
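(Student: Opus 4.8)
The plan is to carry out a standard Sard--Smale transversality argument, but restricted to the constrained space $\cJ_V^{j}$ of almost complex structures that are fixed to agree with $j_i$ along each $V_i$. The key conceptual point, and the reason this requires care beyond Lemma \ref{genericA}, is that the embedding $u$ of $V_1$ has its image entirely contained in $V$, so the usual trick of perturbing $J$ near a point $u(x_0)\notin V$ is unavailable: every variation of $J$ we are permitted to make must vanish to first order along $V_1$ itself (and along the other $V_i$). First I would set up the universal linearized operator. Consider the vertical differential $D\overline\partial$ of the $\overline\partial_{i,J}$ section along the fibration $\pi\co \cU\to\cJ_V$, and form the universal operator
\[
\cD\co W^{k,p}(\Sigma, u^*TM)\oplus T_J\cJ_V^{j}\longrightarrow W^{k-1,p}(\Sigma,\Lambda^{0,1}\otimes u^*TM),\qquad \cD(\x,Y)=D_u\overline\partial(\x)+\tfrac12 Y(u)\circ du\circ i.
\]
The heart of the matter is to show $\cD$ is surjective; once that is established, the fiberwise operator $D_u\overline\partial$ is surjective for a second-category set of $J$ by the Sard--Smale theorem applied to $\pi$ restricted to the zero set of the universal section, exactly as in \cite{DL}.

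The main obstacle, as anticipated, is proving surjectivity of $\cD$ while the perturbation $Y$ is constrained to lie in $T_J\cJ_V^{j}$, i.e.\ $Y$ must vanish along $V_1$. To handle this I would argue by duality: if $\cD$ is not surjective there is a nonzero $\eta\in W^{k-1,q}(\Sigma,\Lambda^{0,1}\otimes u^*TM)$ ($q$ conjugate to $p$) annihilating the range. Pairing against the image of $D_u\overline\partial$ shows $\eta$ lies in the kernel of the formal adjoint, hence by elliptic regularity is smooth and, by the Carleman-type unique continuation / similarity-principle argument (the aforementioned local injectivity and the fact that zeros of $\eta$ are isolated), $\eta$ vanishes only on a finite set. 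Pairing against the image of the $Y$-term gives
\[
\int_\Sigma \ip{\eta,\, Y(u)\circ du\circ i}\,=\,0 \qquad\text{for all admissible }Y.
\]
Because $u$ is an embedding of $V_1$ off a finite set and $u$ is locally injective, away from the finitely many bad points the image curve is an embedded surface; the constraint is only that $Y\equiv 0$ on $V_1$. The crux is therefore to produce, for any point $p=u(x)$ with $\eta(x)\ne0$ on the embedded locus, an admissible perturbation $Y$ supported near $p$ but vanishing on $V_1$ that makes the integrand nonzero --- contradicting the vanishing integral. This is where the geometry enters: one exploits that the embedded image of $V_1$ is a \emph{codimension-two} submanifold, so in a normal slice transverse to $V_1$ one has genuine freedom to prescribe $Y$ in the normal directions while keeping $Y|_{V_1}=0$. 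Concretely, I would choose $Y$ to be supported in a tubular neighborhood of $p$, to vanish to the appropriate order along $V_1$, and to rotate in the normal bundle so that $\ip{\eta, Y\,du\,i}$ has a definite sign on a small disk --- the construction of such a compatible infinitesimal $Y$ (respecting $\w$-compatibility and the vanishing constraint simultaneously) is the genuinely delicate step.

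For part (2), when $k(A)<0$, the target statement is \emph{injectivity} of the linearization rather than surjectivity, and the argument runs dually. Here one shows that the \emph{adjoint} universal operator is surjective, equivalently that the cokernel of the universal $\cD$ vanishes in the relevant direction, so that for generic $J\in\cG_V^{j}$ the operator $D_u\overline\partial$ has trivial kernel; the dimension bookkeeping is precisely $k(A)<0$ forcing the expected kernel dimension to be negative, hence zero generically. The same local perturbation lemma --- producing admissible $Y$ vanishing on $V_1$ yet nontrivial in the normal slice --- is what feeds both halves. I would therefore isolate that local perturbation construction as the single technical lemma doing all the work, prove it once near a generic embedded point of the image, and then deduce both surjectivity (case 1) and injectivity (case 2) by the standard Sard--Smale plus index-count dichotomy. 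The one point I would flag for extra care is the behavior at the finitely many non-injective or non-immersed points of $u$ and at points of $\Omega$; but since these form a finite set and the second-category conclusion is insensitive to finite sets, they can be excised, leaving the argument to run on the embedded locus as above.
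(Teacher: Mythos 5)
Your overall framework (Sard--Smale over the constrained space $\mathcal J_V^{j}$, with all the work concentrated in a local perturbation near a point of $V_1$ not lying on any other component) has the right shape, and indeed the paper disposes of this lemma by exactly such a localization, deferring to Lemma A.2 of \cite{DL}. But the specific mechanism you propose for the perturbation step fails, and the failure is structural rather than cosmetic. A tangent vector $Y$ to $\mathcal J_V^{j}$ at $J$ must satisfy $Y|_{TV_i}=0$: differentiating the defining condition $J_t|_{V_i}=j_i$ along a path forces $Y$ to annihilate every vector tangent to $V_i$. Since $u$ parametrizes $V_1$, the composition $du\circ i$ takes values in $TV_1$, so the term $\tfrac12\,Y(u)\circ du\circ i$ you pair $\eta$ against vanishes identically for \emph{every} admissible $Y$. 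The ``freedom in the normal directions'' you invoke is freedom in where $Y$ \emph{sends} normal vectors, but the first-order variation of $\overline\partial_{i,J}u$ only ever evaluates $Y$ \emph{on} tangent vectors of $V_1$ at points of $V_1$. Your universal operator therefore collapses to $(\xi,Y)\mapsto D_u\overline\partial(\xi)$: its surjectivity is equivalent to the statement being proved and cannot be extracted from the $Y$-term. In case (2) the collapse is fatal on index grounds alone, since $k(A)<0$ makes the index $2k(A)$ of $D_u\overline\partial$ negative, so no version of your operator can be surjective, yet your injectivity argument routes through ``the same local perturbation lemma.''

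The correct mechanism is one order higher. The embedding $u$ of $V_1$ is $J$-holomorphic for \emph{every} $J\in\mathcal J_V^{j}$, so the pair $(u,J)$ sits in the universal space for all such $J$ and transversality of the universal $\overline\partial$-section is not the right statement; what one must vary is the \emph{operator}. One studies the family $J\mapsto D_{u,J}$ (equivalently its normal component acting on sections of the normal bundle of $V_1$) and differentiates it in the direction $Y$. That derivative involves the first normal derivatives of $Y$ along $V_1$, which are unconstrained by $Y|_{TV_1}=0$; prescribing the $1$-jet of $Y$ in the normal directions, supported near a point of $V_1$ not contained in any other $V_j$ (this is exactly where the paper's remark about the existence of such a point enters), realizes an arbitrary compactly supported zeroth-order perturbation of the normal operator. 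Running Sard--Smale on this family of Fredholm operators then yields generic surjectivity when $k(A)\ge 0$ and generic injectivity when $k(A)<0$. Your duality/unique-continuation skeleton can be salvaged, but it must be applied to this second-order variation of $J$, not to the first-order one, which is identically zero here.
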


The proof of this lemma follows exactly as the proof of Lemma A.2, \cite{DL}, as the necessary perturbations occur in a neighborhood of a point on $V_1$ which is not contained in any other $V_j$.  Our conditions ensure that such a point exists.

The key point of Lemma 2.2 is that, in spite of the non-genericity of almost complex structure in $\cJ_V$, we may at least require that
non-generic curves do not have nontrivial deformations.  This is recapped in the following:

\begin{lem}\label{jV}Assume $A=[V_i]$ for some $i$.  Let $\Omega$ denote a set of $k(A)$ distinct
points in $M$.
\begin{enumerate}
\item $k(A)\ge 0$: Denote the set of pairs
$(J,\Omega)\in \mathcal J_V\times M^{k(A)}$
by $\mathcal I$. Let $\mathcal J_{[ V]}$
be the subset of pairs $(J,\Omega)$ which are
nondegenerate for the class $A$ in the sense of Taubes
\cite{T4}. Then $\mathcal J_{[ V]}$ is dense in
$\mathcal I$.

\item $k(A)<0$: There exists a set of second category
$\mathcal J_{[ V]}\subset \mathcal J_V$ such that there exist no pseudoholomorphic
deformations of $V$ and there are no other
pseudoholomorphic maps in class $A$ except possibly components of $V$.
\end{enumerate}
\end{lem}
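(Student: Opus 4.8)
The plan is to deduce both statements from the local surjectivity/injectivity results of Lemma \ref{genericV} by a standard Sard--Smale argument, carefully separating the two types of points in $\mathcal U$. First I would fix a $j$ for which $\mathcal J_V^{j}\ne\emptyset$, so that Lemma \ref{genericV} is applicable. The essential observation is the decomposition $\mathcal J_V=\cup_{j}\mathcal J_V^{j}$: any pseudoholomorphic representative of $A=[V_i]$ either is an embedding of $V_i$ itself (or of $V_j$ when $[V_i]=[V_j]$), which is handled by Lemma \ref{genericV}, or it contains a point not lying on $V$, in which case the perturbation argument of Lemma \ref{genericA} applies verbatim since the perturbations can be localized at such a point. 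Thus the analysis splits cleanly into ``along $V$'' and ``off $V$'' contributions.

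For part (1), with $k(A)\ge 0$, I would run the universal moduli space argument. On the off-$V$ stratum the universal linearized operator is surjective by the argument underlying Lemma \ref{genericA}, so Sard--Smale produces a set of second category of $(J,\Omega)$ for which these curves are nondegenerate in Taubes' sense. On the along-$V$ stratum, Lemma \ref{genericV}(1) gives a second-category set $\mathcal G_V^{j}$ in $\mathcal J_V^{j}$ on which the linearization at the embedding of $V_i$ is surjective, hence nondegenerate. The subtlety is that the along-$V$ stratum is fibered over the lower-dimensional pieces $\mathcal J_V^{j}$ rather than over all of $\mathcal J_V$, so I would only claim \emph{density} of $\mathcal J_{[V]}$ in $\mathcal I$ (as stated), intersecting the off-$V$ second-category set with the union over $j$ of the along-$V$ dense sets. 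The upshot is that for $(J,\Omega)$ in a dense subset, every embedded representative of $A$ is nondegenerate.

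For part (2), with $k(A)<0$, the argument is cleaner because there are no point constraints ($\Omega=\emptyset$) and one seeks only injectivity of the linearization, which rules out nontrivial deformations. Lemma \ref{genericV}(2) furnishes a second-category set $\mathcal G_V^{j}\subset\mathcal J_V^{j}$ on which the linearization at the embedding of $V_i$ is injective, so $V_i$ admits no pseudoholomorphic deformations; for candidate curves carrying a point off $V$, the argument of Lemma \ref{genericA} shows that a generic $J$ admits no such embedded curve at all, since negative Gromov dimension together with surjectivity of the universal operator forces the relevant moduli space to be empty. Intersecting these second-category conditions over the finitely many relevant $j$ yields the desired $\mathcal J_{[V]}\subset\mathcal J_V$ of second category.

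The main obstacle I anticipate is bookkeeping rather than analysis: the non-genericity of $\mathcal J_V$ means one cannot perturb $J$ freely near $V$, so the two strata must be treated by genuinely different mechanisms and then reconciled. In particular, because the along-$V$ perturbations live only within the lower slices $\mathcal J_V^{j}$, I expect this is precisely why part (1) yields only density while part (2) yields second category --- and I would take care that the statement of density in (1) is exactly what the stratified argument supports, rather than over-claiming a residual set.
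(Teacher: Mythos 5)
Your proposal is correct and follows essentially the same route as the paper's own (sketched) argument: decompose $\mathcal J_V\times M^{k(A)}$ into the slices $\mathcal J_V^{j}\times M^{k(A)}$, treat embeddings of components of $V$ via Lemma \ref{genericV} and curves carrying a point off $V$ via the mechanism of Lemma \ref{genericA} (Lemma A.1 of \cite{DL}), then combine slice by slice. You also correctly identify why part (1) yields only density rather than a residual set, which is precisely the point on which the paper's sketch turns.
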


The general tactic for a proof of this lemma is as follows:  Recall that
\[
\mathcal J_V\times M^{k(A)}=\sqcup_{j}\left(\mathcal J_V^{j}\times M^{k(A)}\right).
\]
Now find a dense subset of $\mathcal J_V^{j}\times M^{k(A)}$ (when non-empty).  Lemma \ref{genericV} provides for a suitable subset $\mathcal G_V^{j}\subset \mathcal J_V^{j}$ of second category for each embedding of a component of $V$ ensuring that the differential operators at such an embedding have the appropriate behavior.  Taking the intersection of all such sets produces a set $\mathcal G_V$ which is still of second category in $\mathcal J_V^{j}$.  Now consider only almost complex structures $J\in \mathcal G_V$ to understand the behavior of $\mathcal U$ at points which have a point in the image off of $V$.  The methods of the proof of Lemma A.1, \cite{DL} apply in this setting.

%


\subsection{Existence of Symplectic Submanifold}

In this section we state and justify a result analogous to Lemma
2.14, \cite{DL}.   Let $V$ be a realization of some homological
configuration $G$. Note that Lemma 2.14, \cite{DL}, can be used to
provide a $\omega$-symplectic submanifold intersecting some $V_i$ as
needed, however it is not immediatley clear why this curve must
intersect the other $V_i$ also locally positively and transversally.
In particular, the restriction of almost complex structures from
$\mathcal J_{V_i}$ to $\mathcal J_V$ must be justified.  This has
been prepared in the previous section and at all points in the proof
of Lemma 2.14, \cite{DL}, these results should be inserted.

We begin with the following observation.

\begin{lemma}\label{posA}
Let $(M,\omega)$ be a symplectic manifold with $b^+(M)=1$, $W$  a connected embedded symplectic submanifold and $A\in H_2(M,\mathbb Z)$.  Assume that $(A-K_\omega)\cdot [W]>0$, $A\cdot A\ge 0$ and $A\cdot [\omega]\ge 0$.  Then
\begin{enumerate}
\item $A\cdot [W]\ge 0$ and
\item if $A\cdot [W]=0$, then either $[W]\cdot[W]=0$ and $A=\lambda[W]$ up to torsion or $W$ is an exceptional sphere.
\end{enumerate}
\end{lemma}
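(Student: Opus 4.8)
The plan is to prove both statements by a dichotomy on the sign of the self-intersection $[W]\cdot[W]$, using two essentially independent tools: the light cone lemma, which is available precisely because $b^+(M)=1$ forces the intersection form to have signature $(1,b^-)$, and the symplectic adjunction formula $K_\omega\cdot[W]+[W]\cdot[W]=2g(W)-2$, valid for the embedded symplectic surface $W$. Since $W$ is symplectic we have $[\omega]\cdot[W]>0$, and since $[\omega]\cdot[\omega]>0$ the class $[\omega]$ lies in the positive cone; I will use it to orient the closed forward cone $\bar{\mathcal C}^+=\{x:\ x\cdot x\ge 0,\ x\cdot[\omega]\ge 0\}$. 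It is harmless to assume $A\neq 0$, since the conclusions are vacuous otherwise.

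First suppose $[W]\cdot[W]\ge 0$. Then $[W]\in\bar{\mathcal C}^+$, and the hypotheses $A\cdot A\ge 0$, $A\cdot[\omega]\ge 0$ place $A\in\bar{\mathcal C}^+$ as well; the light cone lemma then yields $A\cdot[W]\ge 0$, which is (1). For (2), assume in addition $A\cdot[W]=0$. If $[W]\cdot[W]>0$, then $[W]$ is timelike and $A\in[W]^\perp$, a negative-definite subspace, so $A\cdot A\le 0$; combined with $A\cdot A\ge 0$ this forces $A=0$, contrary to assumption. Hence $[W]\cdot[W]=0$, and in this null case the orthogonal complement of $[W]$ meets $\bar{\mathcal C}^+$ only in the ray through $[W]$ (this is exactly where signature $(1,b^-)$ enters), so $A=\lambda[W]$ in $H^2(M;\mathbb R)$, i.e. $A=\lambda[W]$ up to torsion. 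This is the first alternative of (2).

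Now suppose $[W]\cdot[W]<0$, where the light cone lemma is unavailable. Instead I combine the hypothesis $(A-K_\omega)\cdot[W]>0$ with adjunction: $A\cdot[W]>K_\omega\cdot[W]=2g(W)-2-[W]\cdot[W]$, and since all quantities are integers, $A\cdot[W]\ge 2g(W)-1-[W]\cdot[W]$. When $g(W)\ge 1$, or when $g(W)=0$ and $[W]\cdot[W]\le -2$, the right-hand side is strictly positive, so $A\cdot[W]>0$ and (1) holds with strict inequality, leaving no equality case. The only remaining possibility is $g(W)=0$ with $[W]\cdot[W]=-1$, where the bound reads $A\cdot[W]\ge 0$; an embedded symplectic sphere of square $-1$ is by definition an exceptional sphere, so whenever $A\cdot[W]=0$ with $[W]\cdot[W]<0$ we land in this case, giving (1) and the second alternative of (2) simultaneously.

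The arithmetic in the negative case is routine; the substance lies in the forward-cone analysis. The step I expect to require the most care is the null case $[W]\cdot[W]=0$ in part (2): one must show that the orthogonal complement of a nonzero forward null class meets the closed forward cone only along its own ray, a feature of Lorentzian signature $(1,b^-)$ that fails once $b^+>1$, so the hypothesis $b^+=1$ is used exactly here. One should also be explicit about passing from the real proportionality $A=\lambda[W]$ (with $\lambda\in\mathbb Q$, since both classes are integral) to the integral statement ``up to torsion,'' and, as noted, about discarding the trivial class $A=0$.
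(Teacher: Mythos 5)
Your proposal is correct and follows essentially the same route as the paper's proof: the light cone lemma (using $b^+=1$) handles the case $[W]\cdot[W]\ge 0$, including the null-ray proportionality in part (2), while the adjunction formula combined with $(A-K_\omega)\cdot[W]>0$ handles $[W]\cdot[W]<0$ and isolates the exceptional-sphere alternative. The only difference is presentational: you run the adjunction estimate directly where the paper argues by contradiction from $A\cdot[W]<0$.
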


\begin{proof}
If $[W]\cdot [W]\ge 0$, then $A\cdot [W]\ge 0$ by the light cone lemma (Lemma 3.1, \cite{TJLL}).

Now consider $[W]\cdot [W]<0$.  Let $(A-K_\omega)\cdot [W]>0$ and assume further that $A\cdot [W]<0$.  Then
\[
K_\omega\cdot [W] <A\cdot [W]<0.
\]
As $W$ is a connected embedded symplectic submanifold, it satisfies the adjunction equality which implies
\[
[W]\cdot [W]+2-2g=-K_\omega\cdot [W]>0
\]
and thus $[W]\cdot [W]>2g-2$.  Therefore $[W]\cdot [W]\ge 0$ unless $g=0$ and $[W]\cdot [W]=-1$.
%
Thus $[W]$ is  an exceptional sphere.  Then $-1=K_\omega\cdot [W] <A\cdot [W]<0$, contradicting $A\cdot [W]<0$.

This proves the non-negativity statement of the lemma.

Assume that $A\cdot [W]=0$.  Then by the light cone lemma $[W]\cdot [W]\le 0$.   If $[W]\cdot[W]=0$, then, again by the light cone lemma, $A=\lambda[W]$ up to torsion.  Otherwise $W$ is an exceptional sphere.

\end{proof}

The following is a version of this statement for exceptional spheres.

\begin{lemma}\label{posexc}
Let $(M,\omega)$ be a symplectic manifold with $b^+(M)=1$, $W$  an embedded symplectic submanifold and $A\in \mathcal E_\omega$ an exceptional sphere.  Let $(A-K_\omega)\cdot [W]>0$.  Then
\begin{enumerate}
\item $A\cdot [W]\ge 0$ unless $A=[W]$ and
\item if $A\cdot [W]=0$, then there exists an exceptional sphere in the class of $A$ which is disjoint from $W$.
\end{enumerate}
\end{lemma}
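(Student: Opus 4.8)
The plan is to mirror the structure of Lemma~\ref{posA}, using the adjunction equality for $A$ (now available because $A\in\mathcal E_\omega$ is an exceptional sphere, hence $A\cdot A=-1$ and $K_\omega\cdot A=-1$) as the new ingredient that replaces the unrestricted class in the previous lemma. First I would dispose of the non-negativity claim by the light cone lemma whenever $[W]\cdot[W]\ge 0$, exactly as before. The genuinely new case is $[W]\cdot[W]<0$ together with the assumption $A\cdot[W]<0$. Here I would combine the hypothesis $(A-K_\omega)\cdot[W]>0$ with adjunction applied to $W$ to force $[W]$ to be an exceptional sphere, just as in the proof of Lemma~\ref{posA}. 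The extra subtlety, absent in Lemma~\ref{posA}, is that now \emph{both} $A$ and $[W]$ are exceptional classes, so I would need to invoke the pairing properties of exceptional spheres: two distinct exceptional classes $E,E'\in\mathcal E_\omega$ satisfy $E\cdot E'\ge 0$ (they admit disjoint or transversally positively intersecting representatives), and $E\cdot E=-1$. This immediately yields $A\cdot[W]\ge 0$ unless $A=[W]$, which is precisely the stated escape clause $A=[W]$ in part~(1).

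For the equality case $A\cdot[W]=0$, the plan is as follows. By the light cone lemma (or by the non-negativity just established) we are in the situation where $A\ne[W]$ and $A\cdot[W]=0$. Using that $A$ is an exceptional class, I would appeal to the structure theory of exceptional spheres in symplectic $4$-manifolds with $b^+=1$: for any exceptional class $A\in\mathcal E_\omega$ one may choose an embedded $\omega$-symplectic representative, and moreover for a class pairing to zero with a given symplectic submanifold $W$ one can arrange this representative to be disjoint from $W$. Concretely, I would pick $J\in\mathcal J_W$ making $W$ pseudoholomorphic, use Taubes--Seiberg--Witten theory to produce an embedded $J$-holomorphic sphere $C$ in class $A$, and then observe that since $C\cdot[W]=A\cdot[W]=0$ and both are $J$-holomorphic, positivity of intersections forces $C\cap W=\emptyset$ (any intersection point would contribute a strictly positive local intersection number). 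This is the cleanest route and leverages exactly the machinery emphasized in the discussion preceding Theorem~\ref{stability}.

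The step I expect to be the main obstacle is the disjointness conclusion in part~(2). The difficulty is that producing an embedded symplectic (equivalently $J$-holomorphic for generic admissible $J$) representative of $A$ in class $\mathcal E_\omega$ is standard, but ensuring it is simultaneously $J$-holomorphic for a $J$ making $W$ holomorphic requires that the relevant Seiberg--Witten/Gromov invariant of $A$ be detected by a $J\in\mathcal J_W$, not merely by a generic $J$. This is exactly the kind of non-genericity issue that the previous subsection (Lemmas~\ref{genericA}--\ref{jV}) was built to handle: one must argue that within $\mathcal J_W$ there is a $J$ realizing the exceptional class by an embedded curve, and that positivity of intersections then applies. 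I would therefore route the argument through Lemma~\ref{jV} (applied with the roles adapted so that $W$ plays the part of the fixed configuration and $A$ the class being represented), which guarantees existence of a pseudoholomorphic representative of $A$ for $J\in\mathcal J_W$ while keeping $W$ holomorphic. Once both curves are $J$-holomorphic with zero algebraic intersection, local positivity of intersections immediately gives the desired disjoint representative, completing the proof.
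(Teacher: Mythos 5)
There is a genuine gap in your treatment of part (1) in the case $[W]\cdot[W]\ge 0$. You propose to dispose of the non-negativity claim ``by the light cone lemma, exactly as before,'' but the light cone lemma applies only to a pair of classes both lying in the closure of the positive cone: in Lemma \ref{posA} this was legitimate because $A\cdot A\ge 0$ and $A\cdot[\omega]\ge 0$ were hypotheses there. Here $A\in\mathcal E_\omega$ has $A\cdot A=-1$, so $A$ is not in the closure of the positive cone and the light cone lemma says nothing about the sign of $A\cdot[W]$, even when $[W]\cdot[W]\ge 0$. The paper's proof replaces this step by a pseudoholomorphic argument: since $A$ is an exceptional class it is GT-basic, so for \emph{any} almost complex structure making $W$ pseudoholomorphic the class $A$ has a connected, possibly nodal and possibly multiply covered, $J$-holomorphic representative; every image component meets $W$ locally positively, and components that cover $W$ itself contribute $m[W]\cdot[W]\ge 0$ precisely because $[W]\cdot[W]\ge 0$ in this case. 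That is the argument you need here, and it is not interchangeable with the light cone lemma. Your handling of the case $[W]\cdot[W]<0$ (adjunction forces $W$ to be an exceptional sphere, then the pairing properties of distinct exceptional classes, i.e.\ Lemma 3.5 of \cite{TJLL}) matches the paper.

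For part (2) your route is viable and close in spirit to the paper's, with two remarks. First, the relevant genericity statement is Lemma \ref{genericA} rather than Lemma \ref{jV}: in part (2) one necessarily has $A\ne[W]$ (since $A\cdot[W]=0\ne -1=A\cdot A$), and \ref{jV} is the statement for the class of a component of the configuration. Second, the paper does not directly produce an embedded representative for a constrained $J$; it takes the connected nodal representative, writes $A=\sum A_i+m[W]$, uses $A\cdot[W]=0$ and non-negativity of each term to conclude every $A_i$ is disjoint from $W$ and (by connectedness) $m=0$, and only then perturbs $J$ away from $W$ to reduce to a single embedded component; when $W$ is itself exceptional it instead quotes Lemma 3.5 of \cite{TJLL}. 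Your direct approach would work once the genericity machinery is correctly invoked, but as written the proposal fails at the first step of part (1).
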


\begin{proof}
Notice that $A$ is GT-basic as it is an exceptional sphere.

Assume that $[W]\cdot[W]\ge 0$.  Then for any almost complex
structure making $W$ pseudoholomorphic, we can find a connected
pseudoholomorphic representative for $A$.  This curve may have many
components connected by nodes, some multiply covered, but each image
curve must intersect $W$ locally positively.  In particular, this
representative of $A$ can have components covering $W$, however
these also contribute only positively to $A\cdot [W]$.  Therefore
$A\cdot [W]\ge 0$.

Let $[W]\cdot[W]<0$.  As in Lemma \ref{posA}  , the assumption
$(A-K_\omega)\cdot [W]>0$ and $A\cdot [W]<0$ implies that $W$ is an
exceptional sphere.   Lemma 3.5, \cite{TJLL}, ensures that $A\cdot
[W]\ge 0$ unless $A=[W]$.

Let $A\cdot [W]=0$.  If $[W]\cdot[W]\ge 0$, then by the above
argument a connected $J$-holomorphic representative of $A$ can be
found such that $A=\sum A_i+ m[W]$.

Since each component $A_i$ intersects $W$ non-negatively, by pairing
with $[W]$, one sees that $A_i$ are indeed disjoint from $W$.  The
connectedness assumption thus implies $m=0$. A standard genericity
argument shows by perturbing $J$ away from $W$ we may assume there
is only one component among $A_i$ which is non-empty, giving the
desired exceptional sphere. If $W$ is an exceptional sphere, then
Lemma 3.5, \cite{TJLL}, provides for the existence of a
representative of $A$ disjoint from $W$.

\end{proof}

The converse of these results, i.e. that $A\cdot [W]\ge 0$ implies
$(A-K_\omega)\cdot [W]>0$, need not be true.  Let $M=S^2\times
\Sigma_3$, $\Sigma_3$ a  genus 3 surface.  Consider the standard
basis of $H_2(M,\mathbb Z)$ and any symplectic form with
$K_\omega=4F-2S$.  Let $[W]=S-F$ and $A=S+F$.  Then $A\cdot [W]=0$
but $(A-K_\omega)\cdot [W]=-6$.  Notice that $[W]$ is representable
by a symplectic submanifold of genus 3 for some symplectic form with
this canonical class.

The following result is an extension of Lemma 2.14, \cite{DL} from
a submanifold to a curve configuration.  The proof is largely
identical hence we only give an outline with appropriate details
relevant to multiple components.

\begin{lem} \label{rel submanifold} Fix a symplectic form $\omega$
on $M$ with $b^+(M)=1$ such that  $V$ is a curve configuration.  For
any $A\in H_2(M;\mathbb Z)$ with
\begin{gather*}
A\cdot E >0\mbox{ for all } E\in \mathcal E_\omega,\\
 A\cdot A>0,\hspace{3mm} A\cdot [\omega]>0,\\
(A-K_{\omega})\cdot [\omega]>0,\hspace{3mm}
(A-K_{\omega})\cdot (A-K_{\omega})>0,\\ (A-K_{\omega})\cdot
[V_i]>0\mbox{ for all } i\in I,
\end{gather*}
there exists a connected embedded $\omega$-symplectic submanifold $C$ in the
class $A,$ intersecting $V$ $\omega$-orthogonally and positively.
\end{lem}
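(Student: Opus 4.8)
The plan is to follow the strategy of Lemma 2.14 in \cite{DL}, adapting each step so that the resulting curve meets \emph{all} of the components $V_i$ positively rather than a single submanifold. The hypotheses $A\cdot A>0$, $A\cdot[\omega]>0$ and $A\cdot E>0$ for all $E\in\mathcal E_\omega$ should guarantee, via Taubes--Seiberg--Witten theory (as packaged in \cite{DL}), that $A$ is represented by some $\omega$-symplectic submanifold; the conditions $(A-K_\omega)\cdot[\omega]>0$ and $(A-K_\omega)\cdot(A-K_\omega)>0$ ensure that after passing to $A-K_\omega$ (or using the inflation/enlargement of $A$) one stays in the relevant cone where the existence results apply. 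First I would invoke the generic almost complex structure results of the previous subsection, namely Lemma \ref{genericA} and Lemma \ref{jV}, to select $J\in\mathcal J_V$ which is simultaneously nondegenerate for the class $A$ and for which the components $V_i$ remain $J$-holomorphic. This is precisely the content prepared above: restricting from $\mathcal J_{V_i}$ to $\mathcal J_V$ is legitimate because the relevant genericity survives on the intersection $\cap_i\mathcal J_{V_i}$.

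Next I would produce the connected embedded representative $C$ of $A$ as a $J$-holomorphic curve for such a generic $J\in\mathcal J_V$, exactly as in \cite{DL}. Since both $C$ and each $V_i$ are $J$-holomorphic for the \emph{same} $J$, their intersections are automatically isolated, positive and transverse after a small generic perturbation, and one can arrange $\omega$-orthogonality by the standard local normal-form argument near each intersection point. The key extra input needed beyond the single-submanifold case is that $C$ cannot be forced to contain, or coincide with, one of the components $V_i$: here the positivity conditions $(A-K_\omega)\cdot[V_i]>0$ for all $i$ are essential. These are precisely the hypotheses feeding Lemma \ref{posA} and Lemma \ref{posexc}, which translate $(A-K_\omega)\cdot[V_i]>0$ into the intersection-theoretic nonnegativity $A\cdot[V_i]\ge 0$ and rule out the degenerate possibilities (either $A=\lambda[V_i]$ or $V_i$ exceptional with $A$ absorbing it). I would apply those two lemmas componentwise to conclude that the geometric intersection of $C$ with each $V_i$ is governed by the homological number $A\cdot[V_i]\ge 0$, so no component is spuriously covered or shared.

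The main obstacle, and the point deserving the most care, is controlling the intersection of $C$ with the components $V_i$ of negative self-intersection, where $A\cdot[V_i]$ could a priori be negative for the produced representative even though the desired curve should meet $V_i$ positively. This is exactly where the configuration case departs from the single-submanifold case of \cite{DL}: one must ensure that the $J$-holomorphic representative does not develop a component lying along some negative $V_i$. The resolution is to run the genericity of Lemma \ref{jV} to forbid low-dimensional (GT-non-basic) pieces from appearing, and then to invoke Lemma \ref{posexc} in the exceptional case to split off any $V_i$-covering component and absorb it disjointly. After handling each $V_i$ this way, connectedness of $C$ together with the positivity of all pairings $A\cdot[V_i]$ yields the single connected embedded representative meeting $V$ positively and $\omega$-orthogonally. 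I therefore expect essentially no new analysis: the novelty lies entirely in verifying that the intersection bookkeeping with the multiple negative components is consistent, which is what the preliminary Lemmas \ref{posA} and \ref{posexc} were designed to supply.
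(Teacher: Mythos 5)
Your overall strategy matches the paper's: reduce to the genericity statements (Lemmas \ref{genericA}, \ref{genericV}, \ref{jV}), use Lemma \ref{posA} to convert $(A-K_\omega)\cdot[V_i]>0$ into $A\cdot[V_i]>0$ (with the exceptional case covered by the hypothesis $A\cdot E>0$), produce an embedded representative following Lemma 2.14 of \cite{DL}, and finish with the perturbation results of \cite{LU} and \cite{G} to obtain transversality and $\omega$-orthogonality. That much is right and is exactly how the paper proceeds.

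However, there is a genuine gap at the step you yourself flag as the main obstacle. You propose to exclude components of the holomorphic representative lying along negative components $V_i$ by ``running the genericity of Lemma \ref{jV}'' and then ``invoking Lemma \ref{posexc} to split off any $V_i$-covering component.'' Lemma \ref{posexc} does not apply here: it is a statement about classes $A\in\mathcal E_\omega$, whereas in this lemma $A\cdot A>0$; it cannot be used to absorb multiple covers of negative $V_i$. The paper's actual mechanism is quantitative: one writes the Taubes representative as
\[
A=\sum_{k([V_i])\ge 0}m_i[V_i]+\sum_{k([V_i])< 0}m_i[V_i]+\sum_i b_iB_i+\sum_i a_iA_i,
\]
isolates the genuinely multiply-covered negative part $V_{mult}$, and proves the Gromov-dimension inequality
\[
2k(A)-2k'(A-V_{mult})=(A-K_\omega)\cdot V_{mult}+\mbox{(non-negative terms)}>0,
\]
combined with $\sum k(A_i)\le k'(A-V_{mult})$, to force either $V_{mult}=0$ or $A=V_{mult}$; the latter case is then handled separately by choosing $\Omega$ off $V$ and using Lemmas \ref{genericA} and \ref{jV}. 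This dimension count is where the hypothesis $(A-K_\omega)\cdot[V_i]>0$ does its real work, beyond merely feeding Lemma \ref{posA}, and it is absent from your argument. You also omit the edge cases $A=m[V_0]$ with $k([V_0])=0$ in the first case, which the paper rules out explicitly using $A\cdot A>0$ and $(A-K_\omega)\cdot[V_0]>0$. So the claim that ``essentially no new analysis'' is needed is not justified as written.
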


\bpf The assumptions
\begin{gather*}
A\cdot E >0\mbox{ for all } E\in \mathcal E_\omega,\\
 A\cdot A>0,\hspace{3mm} A\cdot [\omega]>0,\\
(A-K_{\omega})\cdot [\omega]>0,\hspace{3mm}
(A-K_{\omega})\cdot (A-K_{\omega})>0,
\end{gather*}
together with $b^+=1$ ensure that for generic almost complex structures $A$ admits a connected embedded pseudoholomorphic representative (see \cite{TJLL}).

By Lemma \ref{posA}, the assumption $(A-K_{\omega})\cdot[V_i]>0$ together with $A\cdot A>0$ and $A\cdot[\omega]>0$ ensures that $A\cdot[V_i]>0$ unless possibly if $V_i$ is an exceptional sphere.  In the latter case, $A\cdot [V_i]>0$ by assumption. Therefore the results of the previous section on the genericity of almost complex structures can be applied to $A$ and its components.

Standard arguments (see for example the proof of Lemma 2.14, \cite{DL}) lead to  the following decomposition for
the class $A$:
\[
A=\sum_{k([V_i])\ge 0}m_i[V_i]+\sum_{k([V_i])< 0}m_i[V_i]+\sum_ib_iB_i+\sum_ia_iA_i
\]
where
\begin{enumerate}
\item $B_i\in \mathcal E_\omega$,
\item  all intersections of distinct classes are non-negative,
\item $A_i\cdot A_i\ge 0$ and $k(A_i)\ge 0$ and
\item all sums are finite.
\end{enumerate}

Assume that the second summand (over $k([V_i])< 0$) is empty.  Then consider any $V_i$ with $A\cdot [V_i]>0$ and fix this as $V_i=V_0$ ( in particular, if $A=m[V_i]$, choose this particular $i$).  Move all of the other $[V_i]$-terms into either the $B_i$ or the $A_i$ summand as appropriate (this can be done as $k([V_i])\ge 0$).  Now repeat the argument in Case 1 of the proof of Lemma 2.14, \cite{DL}.  As in that proof, if $A\ne m[V_0]$ or $A=m[V_0]$ and $k([V_0])>0$, then the proof is complete.  The remaining case is $A=m[V_0]$ and $k([V_0])=0$, which implies that either $m=1$ or $[V_0]^2=0$.  Notice that if $[V_0]^2=0$, then $A\cdot [V_0]=0$, contradicting our assumption that $A\cdot A>0$.    If $m=1$, then $A=[V_0]$ and thus $(A-K_\omega)\cdot [V_0]=2k([V_0])=0$, contradicting our assumptions.

This provides for an embedded $J$-holomorphic curve $\tilde C$ representing $A$ with a single non-multiply covered component intersecting $V_0$ positively where $J$ is now chosen appropriately from $\mathcal J_V$.  This however also implies that $\tilde C$ intersects all $V_i$ locally positively.  When $A\ne m[V_0]$, then also, by our choice of $V_0$, $A\ne n[V_i]$ for all $i$, and hence we can ensure that $\tilde C$ is distinct from any $V_i$.  When $k([V_0])>0$, then by an appropriate choice of points $\Omega_{k(A)}$ we can again ensure that $\tilde C$ is distinct from any $V_i$.

Each of the members of $V$ is distinct, hence all intersection points are isolated.  Apply Lemma 3.2 and Prop. 3.3, \cite{LU}, to perturb only $\tilde C$ to a pseudoholomorphic curve $C'$, while leaving each $V_i$ unchanged,  such that $C'$ intersects the pseudoholomorphic curve family locally positively and transversally.  Now perturb $C'$ further to a $J$-holomorphic curve $C$ which is $\omega$-orthogonal to $V$.  This involves a local perturbation around the intersection points and can be done in such a way as to ensure that distinct intersections stay distinct, see \cite{G}.

Assume now that the second summand is not empty.  We rewrite the class $A$ as follows.  First, move all of the terms in $\sum_{k([V_i])\ge 0}m_i[V_i]$ into either the $B_i$ or the $A_i$ summand, as done in the first case.  Secondly, distinguish in $\sum_{k([V_i])< 0}m_i[V_i]$ those classes corresponding to components of the curve in a class $m_i[V_i]$ which are not multiple covers of $V_i$ and those which are multiple covers.  As noted in \cite{DL}, the former all correspond to curves which underlie the genericity results discussed previously.  A generic choice of almost complex structure in $\mathcal J_V$ thus either removes such curves (if $k(m_i[V_i])<0$) or they can be included in the $A_i$ or $B_i$  sum (if $k(m_i[V_i])\ge 0)$.  Denote the remaining terms
\[
V_{mult}=\sum_{\substack{k([V_i])< 0,\\ V_i \;\; mult.\;\;cover}}m_i[V_i].
\]

Consider now $\tilde A=A-V_{mult}=\sum a_iA_i+\sum b_iB_i$.  The arguments in \cite{Bi99} or \cite{DL} continue to hold albeit with $mZ$ resp. $mV$ replaced by $V_{mult}$.  As in \cite{Bi99}, we obtain the estimate  $\sum k(A_i)\le k'(\tilde A)$.  Moreover,
\[
2k(A)-2k'(\tilde A)=\underbrace{(A-K_\omega)\cdot V_{mult}}_{>0\mbox{ by assumption}}+ \mbox{ non-negative terms}
\]

Hence $k(A)>\sum k(A_i)$ and thus either $A=V_{mult}$ or $V_{mult}=0$.  In the latter case the result follows from arguments as above.  If $A=V_{mult}$, then note that
\[
2k(A)=A\cdot A-K\cdot A=\sum_{\substack{k([V_i])< 0,\\ V_i \; mult.\;\;cover}} m_i(A-K)\cdot[V_i]>0.
\]
Thus choose a point not on $V$.  Then by Lemma \ref{genericA} and
\ref{jV} together with the assumptions on $A$ there  exist $(J,\Omega)$ such that $A$ is represented by
an embedded curve meeting $V$ locally positively, and, as before,
this curve can be made $\omega$-orthogonal to $V$ by the results in
\cite{LU} and \cite{G}.

\epf

\bcor\label{c:inflation} For any $\w$, $A$ and $V$ as in Lemma
\ref{rel submanifold}, there is a family of symplectic forms
$\{\w_t\}_{0\le t\le1}$ such that $\w_0=\w$,
$[\w_t]=[\w]+tPD([A])$ and $V$ is a curve configuration with respect to $\omega_t$.

\ecor

Notice that the conditions
\begin{equation}\label{e:conditions}\begin{gathered}
A\cdot E >0\mbox{ for all } E\in \mathcal E_\omega,\\
 A\cdot A>0,\hspace{3mm} A\cdot [\omega]>0\\
(A-K_{\omega})\cdot [\omega]>0,\hspace{3mm}
(A-K_{\omega})\cdot (A-K_{\omega})>0
\end{gathered}
\end{equation}

from Lemma \ref{rel submanifold} on $A$
ensure that $A$ has a $J$-holomorphic representative for any $J$. In
fact, they ensure that $A$ is a GT-basic class, see \cite{TJLL}.  Moreover, when $b^+=1$, this class is representable by a connected curve.

In particular, the proof above makes explicit use of only
$A\cdot [V_i]\ge 0$ and $(A-K_\omega)\cdot [V_i]>0$; any GT-basic classes satisfying these conditions will verify the lemma.

Specifically, if $A\cdot A=-1$, then it is necessary to assume that no component of $V$ has $[V_i]=A$ or a conclusion as in Lemma \ref{rel submanifold} must be false.  However, this is implied by the assumptions $(A-K_{\omega})\cdot
[V_i]>0$ and $A\ne[V_i]$  for all $ i\in I$, see Lemma \ref{posexc}.  Therefore the proof of Lemma \ref{rel submanifold} is immediately applicable, albeit with the slight change in Case 1 that $A=B_i$ is now allowed.  We state this as a lemma:


\begin{lem} \label{rel exceptional} Fix a symplectic form $\omega$ on $M$ such that $V$ is a curve configuration.  Assume that  $A\in\mathcal E_\omega$ and $A\ne [V_i]$ for any $i\in I$.  Furthermore, assume that
\begin{gather*}
 (A-K_{\omega})\cdot
[V_i]>0\mbox{ for all } i\in I.
\end{gather*}
Then there exists a connected embedded $\omega$-symplectic submanifold $C$ in the
class $A$ intersecting $V$ $\omega$-orthogonally and positively.
\end{lem}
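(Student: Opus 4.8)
The plan is to reduce to the proof of Lemma \ref{rel submanifold}, noting that although the hypothesis $A\cdot A>0$ of that lemma now fails (here $A\cdot A=-1$), the only places where positivity was genuinely used were to guarantee a connected embedded pseudoholomorphic representative and to supply the non-negativity $A\cdot[V_i]\ge 0$ needed by the genericity results of Section \ref{s:tech}. Both of these are recoverable from the exceptional-sphere hypothesis together with Lemma \ref{posexc}, and the remark following Corollary \ref{c:inflation} already signals that the proof transcribes with the single change that $A=B_i$ is allowed in Case 1.

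First I would record the two inputs. Since $A\in\mathcal E_\omega$, the class $A$ is GT-basic, so for generic $J$ it admits an embedded pseudoholomorphic representative, and because $b^+(M)=1$ this representative is connected. Next, for each $i\in I$ the hypotheses $(A-K_\omega)\cdot[V_i]>0$ and $A\ne[V_i]$ let me apply Lemma \ref{posexc}(1) with $W=V_i$, yielding $A\cdot[V_i]\ge 0$. This is precisely the positivity required to invoke Lemmas \ref{genericA}, \ref{genericV} and \ref{jV} for $A$ and for every class appearing in its decomposition, so the restriction of almost complex structures from $\mathcal J_{V_i}$ to $\mathcal J_V$ is legitimate.

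With these in hand I would run the decomposition argument of Lemma \ref{rel submanifold} essentially verbatim. A generic $J\in\mathcal J_V$ produces
\[
A=\sum_{k([V_i])\ge 0} m_i[V_i]+\sum_{k([V_i])<0} m_i[V_i]+\sum_i b_iB_i+\sum_i a_iA_i,
\]
with $B_i\in\mathcal E_\omega$, all cross-intersections non-negative, and $A_i\cdot A_i\ge 0$, $k(A_i)\ge 0$. The only modification is in Case 1: because $A\cdot A=-1$, the surviving component representing $A$ may now be one of the exceptional summands $B_i$ rather than one of the $A_i$. This is exactly the expected outcome, since $A$ is an indecomposable exceptional class. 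The hypothesis $A\ne[V_i]$ guarantees the chosen representative $\tilde C$ is distinct from every $V_i$, while $A\cdot[V_i]\ge 0$ together with local positivity of intersections forces $\tilde C$ to meet each $V_i$ non-negatively.

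Finally I would perform the same two perturbations as in Lemma \ref{rel submanifold}: apply Lemma 3.2 and Prop. 3.3 of \cite{LU} to perturb only $\tilde C$, leaving each $V_i$ fixed, to a pseudoholomorphic $C'$ meeting the configuration locally positively and transversally, and then the local model of \cite{G} to make $C'$ $\omega$-orthogonal to $V$ while keeping distinct intersection points distinct. The subtlety I expect to require the most care is the borderline case $A\cdot[V_i]=0$ for some $i$, where one cannot simply perturb to a transverse positive intersection. Here Lemma \ref{posexc}(2) supplies a representative of $A$ disjoint from that $V_i$, so by a genericity argument away from $V$ the exceptional sphere can be chosen to avoid $V_i$ entirely, which remains compatible with $\omega$-orthogonality at the other components. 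Arranging these disjointness conditions simultaneously for all such $i$ is the one point that goes slightly beyond a direct transcription of Lemma \ref{rel submanifold}.
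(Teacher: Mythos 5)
Your proposal is correct and follows essentially the same route as the paper, which proves this lemma simply by observing (in the paragraph preceding its statement) that the hypotheses $(A-K_\omega)\cdot[V_i]>0$ and $A\ne[V_i]$ together with Lemma \ref{posexc} supply the needed non-negativity $A\cdot[V_i]\ge 0$, so the proof of Lemma \ref{rel submanifold} applies verbatim with the one change that $A=B_i$ is now allowed in Case 1. Your additional discussion of the borderline case $A\cdot[V_i]=0$ via Lemma \ref{posexc}(2) is a sensible elaboration of a point the paper leaves implicit, but it does not change the argument.
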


\subsection{The Relative Symplectic Cone}  Let $\Omega(M)$ denote the space of orientation-compatible symplectic forms on $M$.
The symplectic cone $\mathcal C_M$ is the image of the cohomology class map
 \[
\begin{array}{ccc}
\Omega(M)&\rightarrow& H^2(M,\mathbb R)\\\omega&\mapsto &[\omega].
\end{array}
\]
For a smooth connected surface $V$, the relative symplectic cone $\mathcal C^V_M\subset \mathcal C_M$
is the set of classes of symplectic forms making $V$ a symplectic submanifold.
Since $V$ is $\omega-$symplectic,  by Theorem 2.13 in \cite{DL}, $\mathcal C^V_M
$ contains the cone
\[
\mathcal C_{M,
K_{\omega}}^{A}=\{\alpha=[\omega'] |\;   \omega' \mbox{ symplectic with  $K_{\omega'}=K_{\omega}$, } \alpha\cdot A>0 \},
\]
where $A=[V]$.

Now let $V=\cup_{i\in I} V_i$ be a collection of connected embedded
curves.  One may similarly define $\cC_M^V$. Let $K$ be a symplectic
canonical class for $M$ and define
\[
\mathcal D_K^V=\left\{[\omega]\in\mathcal C_M\;|\;\;[\omega]\cdot[V_i]>0\mbox{  for all }i\in I,\;K_\omega=K\right\}.
\]
This is the set of classes in the $K_\omega$-symplectic cone which
pair positively with each component in the curve configuration. By
definition,
\[
\mathcal D_K^V=\cap_{i\in I}\mathcal C^{[V_i]}_{M,K}.
\]

Note that this does not imply the existence of a symplectic form
$\omega$ making $V$ a curve configuration.

\begin{theorem}\label{relcone}
Let $M$ be a $4$-manifold with $b^+(M)=1$ and $\{V_i\}_{i\in I}$ a family of submanifolds of $M$ such that there exists a symplectic form $\omega$ on $M$ making $V=\cup V_i$ into a curve configuration.  Then
\[
\mathcal D_{K_\omega}^V\subset \mathcal C^V_M.
\]
In particular, for every $\alpha\in \mathcal D_{K_\omega}^V$ there exists a symplectic form $\tau$ in the class $\alpha$ making $V$ into a curve configuration.  Moreover, $\tau$ is deformation equivalent to $\omega$ through forms making $V$ a curve configuration.
\end{theorem}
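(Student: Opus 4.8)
The plan is to reduce the statement to the single-surface relative cone result (Theorem 2.13 in \cite{DL}) via an inflation procedure, using Corollary \ref{c:inflation} as the fundamental step that preserves the entire configuration $V$ rather than a single component. I would proceed by fixing a target class $\alpha\in\mathcal D_{K_\omega}^V$ and constructing a path of symplectic forms from $\omega$ to a form $\tau$ in class $\alpha$, each making $V$ a curve configuration simultaneously. The key observation is that $\mathcal D_{K_\omega}^V$ is a convex open cone (it is a finite intersection of the cones $\mathcal C^{[V_i]}_{M,K}$, each of which is open and convex as the intersection of a half-space with the $K$-symplectic cone), so for any $\alpha$ in it the straight-line segment $[\omega]+t(\alpha-[\omega])$, suitably reparametrized, stays inside $\mathcal D_{K_\omega}^V$.

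First I would verify that inflation along a well-chosen auxiliary class $A$ moves $[\omega]$ in essentially any direction permitted within the cone. Concretely, by Corollary \ref{c:inflation}, for any class $A$ satisfying the hypotheses of Lemma \ref{rel submanifold}, there is a family $\{\omega_t\}$ with $[\omega_t]=[\omega]+t\,PD(A)$ keeping $V$ a curve configuration; in particular $K_{\omega_t}=K_\omega$ throughout, since deformation through symplectic forms preserves the canonical class on a $b^+=1$ manifold. The crucial point to check is that the classes $A$ producing these inflation directions positively span (together with $[\omega]$ itself, via rescaling) a neighborhood of any direction in the cone $\mathcal D_{K_\omega}^V$. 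Because the defining conditions \eqref{e:conditions} on $A$ together with $(A-K_\omega)\cdot[V_i]>0$ are open conditions cutting out a full-dimensional cone, and because large positive multiples of $[\omega]$ and of any $\alpha\in\mathcal D_{K_\omega}^V$ satisfy them (the self-intersection and $K_\omega$-pairing inequalities become dominant under scaling once $b^+=1$ forces the positive cone structure), I can realize the needed direction $\alpha-[\omega]$, up to positive scaling and the freedom to rescale $\omega$, as such an inflation class.

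The technical heart is then a continuation or openness-plus-connectedness argument: I would show the set of $t$ for which the segment point $[\omega_t]$ lies in $\mathcal C^V_M$ and is reachable from $\omega$ by a deformation through configuration-preserving forms is both open and closed in $[0,1]$. Openness follows from Corollary \ref{c:inflation} applied at each reached form (inflating in the residual direction), together with Moser stability for the surface-preserving forms. Closedness is the delicate part and I expect it to be the main obstacle: one must ensure no component $V_i$ degenerates in the limit and that the compatible-$J$ condition (4) of Definition \ref{d:ConfR} survives, which requires controlling the areas $\omega_t(V_i)$ away from zero — guaranteed precisely by the standing hypothesis $[\omega_t]\cdot[V_i]>0$ on $\mathcal D_{K_\omega}^V$. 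Once both are established, connectedness of the interval yields $\tau$ in class $\alpha$ making $V$ a curve configuration, and the deformation equivalence of $\tau$ and $\omega$ through configuration-preserving forms is exactly the path just constructed. The subtlety I would watch most carefully is that inflation along $A$ only guarantees positivity and the configuration property when the full list of hypotheses in Lemma \ref{rel submanifold} holds at \emph{every} intermediate form, so I must confirm these inequalities are preserved along the segment — which again follows from convexity of the cone and the fact that $K_{\omega_t}$ is constant.
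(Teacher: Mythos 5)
You have identified the right inflation class --- essentially $A=l\alpha-PD[\omega]$ for $l$ large, which is exactly the class the paper uses --- but the way you deploy it leaves two gaps. First, the open-plus-closed continuation argument is both unnecessary and, as written, incomplete. One does not need to inflate ``infinitesimally at each reached form'': a single application of Lemma \ref{rel submanifold} at $\omega$ produces an embedded surface $C$ in class $A$ meeting $V$ orthogonally, and the Gompf symplectic sum of $(M,C)$ with a section of an $S^2$-bundle (Thm. 1.4 of \cite{G}) then yields the entire family $\{\omega_t\}$ with $[\omega_t]=[\omega]+tPD(A)$ for all $t\ge 0$, keeping $V$ a curve configuration throughout; at $t=1$ one lands on $l\alpha$ and rescales. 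So there is nothing to continue. Your closedness step, by contrast, is genuinely not established: keeping the areas $\omega_t([V_i])$ bounded away from zero does not by itself produce a limiting symplectic form making $V$ a configuration (one would need a compactness statement for the forms themselves, not merely for their cohomology classes), and you offer no such argument while flagging it as ``the main obstacle.'' As proposed, the proof does not close.

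Second, inflation only moves $[\omega]$ along real multiples of an \emph{integral} class, so the straight-line segment from $[\omega]$ to an arbitrary $\alpha\in\mathcal D_{K_\omega}^V\subset H^2(M,\mathbb R)$ cannot be traversed by inflations unless $\alpha$ is rational, and even then only after first perturbing and rescaling $\omega$ so that $[\omega]$ is itself integral (an openness-of-the-configuration-condition step the paper performs at the outset). The paper proves the statement for integral $e\in\mathcal D_{K_\omega}^V$ by the one-shot inflation above and then treats general $\alpha$ by repeating the limiting argument of Theorem 2.13 in \cite{DL}; your proposal is silent on both points.
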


\begin{proof}

Fix a symplectic form $\omega$ making $V$ into a curve
configuration.   We may assume that $[\omega]\in H^2(M, \mathbb Z)$: since making $V_i$ into a symplectic curve is an open condition and we have only finitely many components of $V$, we consider the intersection of these open sets.  In this intersection there must be a symplectic form $\beta$ with $[\beta]\in H^2(M,\mathbb Q)$ making $V$ into a curve configuration.  Now rescale to get $\omega$.

Let $e\in \mathcal D_{K_\omega}^V\cap H_2(M,\mathbb Z)$.  Then the
class $A=le-PD[\omega]$
satisfies the assumptions of Lemma \ref{rel submanifold} for
sufficiently large $l$.  Thus there exists an $\omega$-symplectic
submanifold $C$ intersecting $V$ locally positively and
$\omega$-orthogonally, with class $[C]=A$.

Let $N$ be an $S^2$-bundle over a surface of genus
$g_\omega(A)=\frac{1}{2}(A\cdot A+K_\omega \cdot A)+1$ and $S$ be a
section with $S\cdot S=-A\cdot A$.  Now apply the pair-wise sum of
Thm. 1.4, \cite{G}, to $(M,C)$ and $(N,S)$.  The thus generated
manifold $X$ is diffeomorphic to $M$.  This symplectic sum produces
a family of deformation equivalent symplectic forms $\omega_t$ on
$M$ in the class $[\omega]+tA$ for $t\ge 0$ such that $V$ is a
curve configuration with respect to $\omega_t$. Thus $[\omega_1]=le$
and the class $e$ is representable by a symplectic form making $V$
into a curve configuration.

Now repeat the argument as in the proof of Theorem 2.13, \cite{DL} for general $e\in \mathcal D_{K_\omega}^V$.

\end{proof}

\begin{remark}  The results of this section are similar to Theorem 1.2.7 (the second part),
1.2.12 and Corollary 1.2.13, \cite{M3} and it seems instructive to
compare the two situations for the reader's convenience.  The key
difference is in the conditions we imposed in Lemma \ref{rel
submanifold}, compared to the second part of Theorem 1.2.7 in
\cite{M3} (we do not have a clear idea about the relations between
the conditions therein and ours).

Our set of assumptions, directly adapted from \cite{DL}, offer
several aspects of convenience.  On the one hand, they replace
rational/ruled assumptions in Proposition 3.2.3 and 5.1.6 in
\cite{M3} so that we have results for manifolds with $b^+=1$
with our assumptions. On the other hand, since this set of
conditions is automatically satisfied when $A$ is a positive class
pairing with all components in the configurations positively and
raised to a high multiple, they are particularly suitable for
performing inflation and allows for slightly more flexibility.  Hence
our singular set places no restrictions on the intersections such as being
transverse between components.

Another difference between the two results stem from the almost
complex structures to be considered.  In this paper we assume that
there exists an almost complex structure making each $V_i$
pseudoholomorphic at the same time. This statement concerns only the
submanifolds themselves and the almost complex structure outside can
be generically chosen. In \cite{M3} the authors consider adapted
almost complex structures which places conditions on a fibered
neighborhood of the configuration $\mathcal S$.  This is crucial for
the geometric constructions therein, which was used to simplify
 the ``B" part of the curve in the decomposition (3.1.2) in
 \cite{M3}.

What we did not deal with in the current paper is the family
inflation (Thm 1.2.12, \cite{M3}), which probably requires similar techniques as in
\cite{M3}.

Furthermore,  the statement of Thm \ref{relcone} is the same as Prop. 1.2.15(i), \cite{M3}.  However, due to the differences in the sets under consideration, as described above, Thm \ref{relcone} is in a more general setting, allowing more general manifolds and configurations.  The proofs also differ slightly:  \cite{M3} use inflation along nodal curves,
we only need to consider embedded curves.

Both results are an extension of Thm. 2.13, \cite{DL}, which is for a single symplectic surface.

\end{remark}


\section{Stability of symplectic curve configurations}\label{nodal}

Suppose that $V$ is a symplectic surface in a symplectic 4-manifold
$(M,\omega)$.   Then we can consider the stability of this surface
under  (not necessarily continuous) variations of the symplectic
structure.  Our main result concerns deformations of the symplectic
structure.
This is the setting of Theorem \ref{stability} and is discussed in
\ref{21}.   Note that $V$ and $\tilde V$ are not just homologous,
but isotopic.

In nice cases, we also address the stability for arbitrary
symplectic structure $\tilde \omega$. The issue which arises in this
context is that it is in general not understood how to go from one
deformation class of symplectic structures to another.  Two cases in which this is explicitly understood are the following.

\begin{enumerate}

\item  The special family of manifolds with $\kappa=-\infty$: For such
manifolds, $V$ and $\tilde V$ will be diffeomorphic, we treat this case in \ref{22}.

\item The special family of GT basic classes:  Well known results imply that any surface $V$ arising in this context is stable.

\end{enumerate}

\subsection{Smoothly isotopic surfaces under deformation--Theorem \ref{stability}}\label{21}

With the preparatory work of the previous section, we are now ready to prove Theorem \ref{stability}, which we recall here.

\begin{theorem}\label{41}Let $(M,\omega)$ be a symplectic manifold with $b^+(M)=1$ and $G$ a homological configuration represented by  a curve configuration $V$.
Then $V$ is $\omega$-stable. Moreover, $\tilde V$ can be chosen such that each component of $\tilde V$ is smoothly isotopic to the corresponding (given by $G$) component in $V$.
\end{theorem}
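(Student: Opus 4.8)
The plan is to reduce the statement to the relative cone result Theorem~\ref{relcone} together with Moser's theorem, the only genuinely new input being an isotopy statement for cohomologous forms. First I would record that the canonical class is a deformation invariant: along a smooth family $\{\omega_t\}$ connecting $\omega$ to $\tilde\omega$, the Chern class $c_1(TM,J_t)$ of a compatible almost complex structure varies continuously inside the integral lattice $H^2(M,\Z)$, hence is locally constant, so $K_{\tilde\omega}=K_\omega$. Combined with the $G$-positivity hypothesis $[\tilde\omega]\cdot[V_i]=\tilde\omega(A_i)>0$ for all $i$, this is exactly the assertion that $[\tilde\omega]\in\mathcal D^V_{K_\omega}$.

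Next I would invoke Theorem~\ref{relcone}: since $b^+(M)=1$ and $V$ is a curve configuration for $\omega$, we have $\mathcal D^V_{K_\omega}\subset\mathcal C^V_M$, so the class $\alpha=[\tilde\omega]$ is realized by a symplectic form $\tau$ with $[\tau]=[\tilde\omega]$ and $K_\tau=K_\omega$, for which the \emph{same} submanifolds $V_i$ still form a realization of $G$. Here it is important that the inflation construction underlying Theorem~\ref{relcone} (via Lemma~\ref{rel submanifold} and Corollary~\ref{c:inflation}) leaves $V$ literally fixed as a point set throughout, so no homological or intersection data is disturbed and a simultaneous compatible almost complex structure persists. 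At this stage $V$ realizes $G$ with respect to $\tau$, and $\tau$ is cohomologous to the target form $\tilde\omega$.

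The final step is to transport this configuration from $\tau$ to $\tilde\omega$ itself. The two forms are cohomologous and, being each deformation equivalent to $\omega$ (the family $\{\omega_t\}$ on the one hand, and the deformation through curve-configuration forms supplied by Theorem~\ref{relcone} on the other), are deformation equivalent to one another. On a $4$-manifold with $b^+=1$ I would appeal to the principle that a symplectic deformation between cohomologous forms can be upgraded to an isotopy, producing a path of cohomologous symplectic forms from $\tau$ to $\tilde\omega$. Moser's theorem then yields a diffeotopy $\{\phi_u\}$ with $\phi_0=\id$ and $\phi_1^{*}\tilde\omega=\tau$. Setting $\tilde V_i=\phi_1(V_i)$ gives the desired realization: $\phi_1$ acts trivially on $H_2$, so $[\tilde V_i]=A_i$; it preserves all intersection multiplicities and the emptiness of triple intersections; and the pushforward $(\phi_1)_*J$ is a $\tilde\omega$-compatible almost complex structure making all $\tilde V_i$ simultaneously holomorphic. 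Finally $u\mapsto\phi_u(V_i)$ is a smooth isotopy from $V_i$ to $\tilde V_i$, which delivers the ``Moreover'' clause componentwise.

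I expect this last step to be the crux. Everything up to the production of $\tau$ is a direct application of Theorem~\ref{relcone}, but identifying $\tau$ with the prescribed form $\tilde\omega$ (rather than merely with a cohomologous one) is where the geometry of $b^+=1$ manifolds is essential: it is precisely here that one must rule out the possibility that cohomologous forms lying in a common deformation class fail to be isotopic, and this is what forces one to rely on the available uniqueness-up-to-isotopy results for symplectic forms on manifolds with $b^+=1$.
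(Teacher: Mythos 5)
Your proposal is correct and follows essentially the same route as the paper's proof: apply Theorem~\ref{relcone} to produce a form $\tau$ cohomologous to $\tilde\omega$ (and deformation equivalent to $\omega$) making $V$ a curve configuration, then invoke McDuff's result that on a $4$-manifold with $b^+=1$ deformation equivalent cohomologous symplectic forms are isotopic, and finish with Moser's lemma to transport $V$ to a configuration $\tilde V$ for $\tilde\omega$. Your identification of the isotopy-upgrade step as the crux matches the paper exactly, where it is supplied by the cited result of McDuff.
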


\begin{proof}

The assumption on $V$ implies by Theorem \ref{relcone} that
$\mathcal D_{K_\omega}^V\subset \mathcal C^V_M$.   Since $\tilde
\omega$ is deformation equivalent to $\omega$ and pairs positively
with $A$, it follows that   $[\tilde\omega]\in \mathcal
D_{K_\omega}^V\subset \mathcal C^V_M$, ie. there is a $V-$symplectic
form $\tau$ cohomologous to $\tilde \omega$.

Notice that by Theorem \ref{relcone}  the  $V-$relative symplectic forms are deformation equivalent to
$\omega$. Thus $\tau$ can be assumed to be  deformation
equivalent to $\omega$.
In \cite{M1} it is shown, that when $b^+=1$, any deformation
equivalent cohomologous  symplectic forms are isotopic. It follows
that $\tau$ and $\tilde \omega$ are isotopic. Applying Moser's
Lemma, we obtain a $\tilde \omega-$symplectic curve configuration $\tilde V$
smoothly isotopic to $V$.
\end{proof}

\begin{remark}\hfill
\begin{enumerate}
\item Note that it is not necessary to postulate that the deformation is
through symplectic forms $\omega_t$ such that $[\omega_t]\cdot [V]>0$.

\item We do not claim that any $\tilde \omega-$surface in the class $A$ is
smoothly isotopic to $V$. In fact,  even for a fixed symplectic structure, there are plenty of
non-uniqueness results (see for example \cite{FS}, \cite{TP}, \cite{TP2}, \cite{TP3}).
However, we have the following observation.

\end{enumerate}

\end{remark}

\begin{cor} Suppose $\omega$ and $\tilde \omega$ are two deformation equivalent symplectic forms on a 4-manifold $M$ with $b^+(M)=1$. If $A\in H_2(M,\mathbb Z)$ is a homology class
pairing positively with both $\omega$ and $\tilde \omega$, then
there is a 1-1 correspondence  of smooth isotopy classes of connected $\omega-$ and $\tilde \omega-$symplectic  surfaces in the class $A$.
\end{cor}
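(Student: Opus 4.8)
The plan is to construct the $1$-$1$ correspondence by showing that every connected $\omega$-symplectic surface in the class $A$ gives rise, via the stability mechanism of Theorem \ref{stability}, to a connected $\tilde\omega$-symplectic surface smoothly isotopic to it, and vice versa, and that this assignment is well-defined on smooth isotopy classes. First I would fix a connected $\omega$-symplectic surface $V$ representing $A$. Taking the trivial homological configuration $G$ consisting of a single vertex labelled by $A$, the surface $V$ is a curve configuration realizing $G$ with respect to $\omega$. Since $\tilde\omega$ is deformation equivalent to $\omega$ and $A\cdot[\tilde\omega]>0$ by hypothesis, the form $\tilde\omega$ is $G$-positive, so Theorem \ref{41} applies: there exists a $\tilde\omega$-symplectic surface $\tilde V$ in the class $A$ that is smoothly isotopic to $V$. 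This defines a map $\Phi$ from $\omega$-symplectic surfaces in class $A$ to $\tilde\omega$-symplectic surfaces in class $A$.

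The next step is to verify that $\Phi$ descends to smooth isotopy classes. Suppose $V_0$ and $V_1$ are smoothly isotopic $\omega$-symplectic surfaces in class $A$; I must check that $\Phi(V_0)$ and $\Phi(V_1)$ are smoothly isotopic. Here the key point is that the construction in Theorem \ref{41} produces $\tilde V$ from $V$ through an explicit chain: a relative deformation of symplectic forms keeping $V$ symplectic (Theorem \ref{relcone}), followed by the isotopy of cohomologous deformation-equivalent forms from \cite{M1}, followed by Moser's lemma. Each stage produces a $\tilde V$ smoothly isotopic to the starting $V$, so $\Phi(V_i)$ is smoothly isotopic to $V_i$ for $i=0,1$; since $V_0$ is smoothly isotopic to $V_1$ by assumption, transitivity of smooth isotopy gives that $\Phi(V_0)$ and $\Phi(V_1)$ are smoothly isotopic. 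This shows $\Phi$ is well-defined on isotopy classes.

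Finally I would construct the inverse. Because the hypothesis is symmetric in $\omega$ and $\tilde\omega$ (both deformation equivalent, both pairing positively with $A$), the identical argument with the roles of $\omega$ and $\tilde\omega$ interchanged yields a map $\Psi$ from $\tilde\omega$-symplectic surfaces in class $A$ to $\omega$-symplectic surfaces in class $A$, descending to isotopy classes. To see $\Psi\circ\Phi=\mathrm{id}$ on isotopy classes, note that for any $\omega$-symplectic $V$ the surface $\Phi(V)$ is smoothly isotopic to $V$, and $\Psi(\Phi(V))$ is smoothly isotopic to $\Phi(V)$, hence to $V$; since $\Psi(\Phi(V))$ is an $\omega$-symplectic surface in class $A$ isotopic to the $\omega$-symplectic surface $V$, the two represent the same isotopy class. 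Symmetrically $\Phi\circ\Psi=\mathrm{id}$, so $\Phi$ is a bijection.

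The main obstacle I anticipate is purely the bookkeeping of well-definedness: one must be careful that the output surface $\tilde V$ of Theorem \ref{41} is genuinely controlled up to smooth isotopy by the input $V$, and not merely up to homology. The content that makes this work is precisely the "smoothly isotopic" refinement in the statement of Theorem \ref{41}, which in turn rests on the isotopy result of \cite{M1} for cohomologous deformation-equivalent forms when $b^+=1$; everything else is formal manipulation of isotopy classes. No genericity or transversality input beyond Theorem \ref{41} is needed, so I expect the proof to be short once the symmetry and transitivity observations are assembled correctly.
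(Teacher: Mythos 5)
Your proposal is correct and follows exactly the route the paper intends: the corollary is stated as an immediate consequence of Theorem \ref{41}, and the correspondence is precisely the one you describe, namely that a smooth isotopy class of connected surfaces in class $A$ contains an $\omega$-symplectic representative if and only if it contains a $\tilde\omega$-symplectic one, with both directions supplied by the ``smoothly isotopic'' refinement of Theorem \ref{41} and the symmetry of the hypotheses. The bookkeeping with $\Phi$ and $\Psi$ is sound, since each map sends a surface to one in its own smooth isotopy class.
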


As an immediate corollary of this theorem and the light cone lemma we obtain:

\begin{cor}
 In the situation of Theorem \ref{41}, if $A_i\cdot A_i\geq 0$ for all $i\in|G|$, then $\tilde V$ exists for any $\tilde \omega$ deformation
equivalent to $\omega$.

\end{cor}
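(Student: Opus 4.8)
The plan is to reduce everything to Theorem \ref{41} by showing that its $G$-positivity hypothesis is automatic once every $A_i$ has non-negative self-intersection. Recall that Theorem \ref{41} already produces the desired $\tilde V$ for any $\tilde\omega$ deformation equivalent to $\omega$ which is $G$-positive, i.e. which satisfies $\tilde\omega(A_i)>0$ for all $i$. So it suffices to prove that \emph{every} symplectic form deformation equivalent to $\omega$ pairs strictly positively with each $A_i$, and then invoke Theorem \ref{41} verbatim.

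First I would set up the light cone picture coming from $b^+(M)=1$. The positive cone $\mathcal P=\{a\in H^2(M,\mathbb R): a\cdot a>0\}$ has exactly two connected components; let $\mathcal P^+$ be the one containing $[\omega]$, which lies in $\mathcal P$ since $[\omega]^2>0$. Because $V_i$ is an $\omega$-symplectic submanifold we have $\omega(A_i)>0$, and by hypothesis $A_i\cdot A_i\ge 0$; together these place the nonzero class $A_i$ in the closed forward cone $\overline{\mathcal P^+}$. Next I would transport this to $\tilde\omega$: a deformation from $\omega$ to $\tilde\omega$ is a smooth path $\{\omega_t\}$ of symplectic forms, so the induced path $t\mapsto[\omega_t]$ is continuous, lies entirely in $\mathcal P$ (each $\omega_t$ has positive square), and begins in $\mathcal P^+$; by connectedness it cannot cross into the other component, hence $[\tilde\omega]\in\mathcal P^+$.

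With both classes located, I would apply the light cone lemma (Lemma 3.1, \cite{TJLL}) to the pair $[\tilde\omega]$ and $A_i$: one class lies strictly inside $\mathcal P^+$ while the other is a nonzero element of $\overline{\mathcal P^+}$, and the lemma gives $\tilde\omega(A_i)>0$. This yields $G$-positivity of $\tilde\omega$, and Theorem \ref{41} then immediately supplies the curve configuration $\tilde V$. The only genuinely delicate point is the \emph{strictness} of the inequality $\tilde\omega(A_i)>0$, as opposed to mere non-negativity: here one uses that $[\tilde\omega]$ is strictly timelike ($[\tilde\omega]^2>0$) together with $A_i\ne 0$ and $A_i\cdot A_i\ge 0$, since a class pairing to zero with a timelike forward class would be forced to be spacelike or zero. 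This strictness, and the fact that the deformation stays within a single component of $\mathcal P$, are exactly what make the corollary \emph{immediate} from Theorem \ref{41} and the light cone lemma, requiring no further machinery.
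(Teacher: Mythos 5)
Your proposal is correct and follows exactly the route the paper intends: the paper derives this corollary as an "immediate corollary of this theorem and the light cone lemma," i.e. by using $A_i\cdot A_i\ge 0$, $\omega(A_i)>0$, and the connectedness of the forward positive cone to conclude that every deformation-equivalent $\tilde\omega$ is automatically $G$-positive, and then applying Theorem \ref{41}. Your spelled-out verification of the strict inequality $\tilde\omega(A_i)>0$ is precisely the content the paper leaves implicit.
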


If $M$ has Kodaira dimension $\kappa(M)=-\infty$, then the deformation class of $\omega$ is determined by the canonical class $K_\omega$
(see \cite{M1}, \cite{KKP}  for rational,
\cite{TJLL2} for irrational ruled manifolds).  This immediately implies the following:

\begin{cor}\label{irrcor}
 In the situation of Theorem \ref{41}, suppose further that $M$ has $\kappa(M)=-\infty$. Then $\tilde V$ exists
for any $G-$positive $\tilde \omega$ with $K_{\tilde \omega}=K_{\omega}$.
In particular, if $A_i\cdot A_i\geq 0$ for all $i\in|G|$, then $\tilde V$ exists
for any $\tilde \omega$ with $K_{\tilde \omega}=K_{\omega}$.

\end{cor}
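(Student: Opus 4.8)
The plan is to deduce this corollary from Theorem \ref{41} by upgrading the hypothesis of deformation equivalence to the weaker condition $K_{\tilde\omega}=K_\omega$. The single substantive input is the classification of symplectic deformation classes on manifolds of Kodaira dimension $-\infty$: for such $M$, the deformation class of a symplectic form is completely determined by its canonical class (\cite{M1}, \cite{KKP} in the rational case, \cite{TJLL2} in the irrational ruled case). Thus the first step is simply to record that $K_{\tilde\omega}=K_\omega$ forces $\tilde\omega$ to be deformation equivalent to $\omega$; all of the geometric content has already been established in Theorem \ref{41}.

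For the main assertion, I would argue as follows. Let $\tilde\omega$ be any $G$-positive symplectic form with $K_{\tilde\omega}=K_\omega$. By the deformation-class classification just cited, $\tilde\omega$ is deformation equivalent to $\omega$. Since $\tilde\omega$ is $G$-positive by hypothesis, Theorem \ref{41} applies verbatim and produces a curve configuration $\tilde V$ realizing $G$ with respect to $\tilde\omega$, with each component smoothly isotopic to the corresponding component of $V$. This gives the first claim.

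For the \emph{in particular} statement, the point is that the $G$-positivity hypothesis becomes automatic once $A_i\cdot A_i\ge 0$ for all $i$. I would proceed in the correct logical order: first use $K_{\tilde\omega}=K_\omega$ to conclude, as above, that $\tilde\omega$ is deformation equivalent to $\omega$, say through a smooth family $\{\omega_t\}$ with $\omega_0=\omega$ and $\omega_1=\tilde\omega$. Each $[\omega_t]$ lies in the forward positive cone, since $[\omega_t]^2>0$ and the family is continuous, so the light cone lemma (Lemma 3.1, \cite{TJLL}) applies: for each $i$, because $A_i\cdot A_i\ge 0$ and $\omega(A_i)>0$, the pairing $A_i\cdot x$ has constant sign on the forward cone and hence $\tilde\omega(A_i)>0$. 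Thus $\tilde\omega$ is automatically $G$-positive, and the assertion already proved (equivalently, the preceding corollary) yields the desired $\tilde V$. There is no serious obstacle in this argument; it is a formal combination of Theorem \ref{41}, the deformation-class classification for $\kappa=-\infty$, and the light cone lemma. The only subtlety is the ordering in the \emph{in particular} part: the light cone lemma may be invoked only after deformation equivalence to $\omega$ has been established, since it is the existence of a path inside the positive cone that guarantees the forward cone, and therefore positivity of $\tilde\omega$ on each $A_i$, is preserved.
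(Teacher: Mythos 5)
Your proposal is correct and follows exactly the paper's (implicit) argument: the deformation class of a symplectic form on a manifold with $\kappa(M)=-\infty$ is determined by its canonical class, so $K_{\tilde\omega}=K_\omega$ yields deformation equivalence and Theorem \ref{41} applies, while the \emph{in particular} clause follows from the light cone lemma exactly as in the preceding corollary. Your remark about establishing deformation equivalence before invoking the light cone lemma is a sensible clarification but does not constitute a departure from the paper's route.
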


The methods employed to prove Theorem \ref{stability} are rather
robust and allow some variation.   The following is an example.

\begin{theorem}\label{ADEstab}
Let $(M,\omega)$ be a symplectic manifold with $b^+(M)=1$ and $G$ a
homological configuration represented by a curve configuration
$V$.  Furthermore, let $\Sigma$ be a connected embedded
$\omega$-symplectic curve disjoint from $V$.  Let $\tilde \omega$ be
any symplectic form on $M$ such that the following hold:
\begin{enumerate}
\item  $\tilde \omega$ is deformation equivalent to $\omega$ through forms $\omega_t$ which leave $\Sigma$ symplectic and
\item  $\omega|_\Sigma=\tilde\omega|_\Sigma$.

\end{enumerate}
  Then there exists a curve configuration $\tilde V$ for
  $\tilde \omega$ such that $\tilde V$ is disjoint from $\Sigma$ and $\tilde V$ is smoothly isotopic (with respect to $G$) to $V$.

\end{theorem}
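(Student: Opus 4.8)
The plan is to deduce Theorem~\ref{ADEstab} from Theorem~\ref{41} applied to an enlarged configuration containing $\Sigma$, and then to upgrade the resulting isotopy to one that keeps $\Sigma$ fixed, the latter being where hypothesis~(2) enters. First I would adjoin $\Sigma$ to the configuration: let $\hat{G}$ be obtained from $G$ by adding one vertex labelled $[\Sigma]$ with no new edges. This is legitimate because $\Sigma$ is disjoint from $V$, so an almost complex structure in $\mathcal J_V$ may be modified in a neighborhood of $\Sigma$ away from $V$ to also make $\Sigma$ holomorphic; hence $\hat{V}=V\cup\Sigma$ is a curve configuration realizing $\hat{G}$ with respect to $\omega$. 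Since $\tilde\omega$ is deformation equivalent to $\omega$ we have $K_{\tilde\omega}=K_\omega$; since by~(1) the deformation keeps $\Sigma$ symplectic we have $[\tilde\omega]\cdot[\Sigma]>0$; together with the $G$-positivity of $\tilde\omega$ this places $[\tilde\omega]\in\mathcal D_{K_\omega}^{\hat{V}}$. Theorem~\ref{relcone} then yields a symplectic form $\tau$, cohomologous to $\tilde\omega$, making $\hat{V}$ a curve configuration and deformation equivalent to $\omega$ through forms for which $\hat{V}$ stays a curve configuration.

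Next I would normalize $\tau$ along $\Sigma$. Because $[\tau]=[\tilde\omega]$ we have $\int_\Sigma\tau=\int_\Sigma\tilde\omega$, and by~(2) this equals $\int_\Sigma\omega$. Choosing, via Moser's theorem on the surface $\Sigma$, a diffeomorphism of $\Sigma$ carrying $\tau|_\Sigma$ to $\omega|_\Sigma$, extending it to a diffeomorphism of $M$ supported in a neighborhood of $\Sigma$ disjoint from $V$, and replacing $\tau$ by its pullback, I may assume $\tau|_\Sigma=\tilde\omega|_\Sigma=\omega|_\Sigma$ while keeping $[\tau]=[\tilde\omega]$ and $\hat{V}$ a curve configuration.

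The decisive step, and the one I expect to be the main obstacle, is to construct a diffeomorphism $\phi$ isotopic to the identity with $\phi^*\tilde\omega=\tau$ and $\phi|_\Sigma=\mathrm{id}_\Sigma$. The non-relative statement is exactly the result of \cite{M1}: on a manifold with $b^+=1$, cohomologous deformation-equivalent forms are isotopic. I need this relative to $\Sigma$. Here the normalized form of~(2), namely $\tau|_\Sigma=\tilde\omega|_\Sigma$ together with the common normal bundle of $\Sigma$, lets me invoke the symplectic neighborhood theorem to find a diffeomorphism fixing $\Sigma$ and supported near it, after which $\tau=\tilde\omega$ on a tubular neighborhood $N$ of $\Sigma$. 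I would then run the inflation argument of \cite{M1} so that the isotopy connecting $\tau$ to $\tilde\omega$ is supported in $M\setminus N$; the real difficulty is to choose the inflation surfaces and cut-off functions disjoint from $\Sigma$, that is, to establish a version of the inflation lemma relative to $\Sigma$, so that the resulting $\phi$ is stationary on $N$ and in particular fixes $\Sigma$.

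Finally, set $\tilde{V}:=\phi(V)$. Since $V$ is $\tau$-symplectic and $\phi^*\tilde\omega=\tau$, the image $\tilde{V}$ is $\tilde\omega$-symplectic and realizes $G$, and the isotopy from $\phi_0=\mathrm{id}$ to $\phi_1=\phi$ exhibits each component of $\tilde{V}$ as smoothly isotopic, with respect to $G$, to the corresponding component of $V$. Because $\phi(\Sigma)=\Sigma$ and $V\cap\Sigma=\emptyset$, we conclude $\tilde{V}\cap\Sigma=\phi(V)\cap\phi(\Sigma)=\emptyset$, which is the assertion of the theorem.
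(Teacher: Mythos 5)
Your strategy coincides with the paper's: adjoin $\Sigma$ to the configuration, apply Theorem \ref{relcone} to $\hat V=V\cup\Sigma$ to get a form $\tau$ cohomologous to $\tilde\omega$ and deformation equivalent to $\omega$, normalize $\tau|_\Sigma$ by Moser on the surface, and then seek an isotopy from $\tau$ to $\tilde\omega$ that is relative to $\Sigma$. Up to that point your argument is sound and matches the paper step for step.

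The one genuine gap is exactly the step you yourself flag as ``the real difficulty'': you never actually establish the relative version of McDuff's deformation-to-isotopy result, i.e.\ that the cohomologous deformation from $\tau$ to $\tilde\omega$ can be replaced by an isotopy of cohomologous forms whose restriction to $\Sigma$ is controlled, so that a final application of Moser produces a diffeomorphism fixing $\Sigma$. Your proposed route --- matching $\tau$ and $\tilde\omega$ on a tubular neighborhood $N$ of $\Sigma$ via the symplectic neighborhood theorem and then rerunning the inflation of \cite{M1} with all inflation surfaces and cut-offs supported in $M\setminus N$ --- is plausible in outline but is not carried out, and it is not obvious that the inflation classes needed to interpolate between $[\alpha_t]$ and the constant class can always be represented by surfaces disjoint from $\Sigma$ (their pairing with $[\Sigma]$ need not vanish). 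The paper does not reprove this either; it closes the gap by citing Theorem 1.2.12 of McDuff--Opshtein \cite{M3}, which is precisely the relative (family) inflation statement producing a two-parameter family $\alpha_{st}$ with $\alpha_{1t}$ a cohomologous deformation restricting to the fixed form on $\Sigma$. So your proof becomes complete once you replace your sketched ``relative inflation lemma'' by an appeal to that result; as written, the central analytic input is asserted rather than proved.
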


\begin{proof}
Theorem \ref{relcone} provides a symplectic form $\tau$ deformation
equivalent to $\w$, which makes $\Sigma\sqcup V$ $\tau$-symplectic
and with $[\tau]=[\tilde\omega]$. As $\tau$ is deformation
equivalent to $\omega$, we obtain a family $\{\alpha_t\}$ of
symplectic forms from $\tau$ to $\tilde\omega$ which satisfy the
following:
\begin{enumerate}
\item $[\alpha_0]=[\tau]=[\tilde\omega]=[\alpha_1]$;
\item $\Sigma$ is $\alpha_t$-symplectic and
\item $\tau|_\Sigma=\tilde\omega|_\Sigma$

\end{enumerate}
Here (3) is achieved by Moser's method on $\Sigma$.  Then by Thm.
1.2.12, \cite{M3}, there is a family of symplectic forms
$\alpha_{st}$ ($s,t\in[0,1]$) such that $\alpha_{1t}$ is a
cohomologous deformation of $\tau$ to $\tilde \omega$ with
$\alpha_{1t}|_\Sigma=\tau|_\Sigma=\tilde\omega|_\Sigma=\omega|_\Sigma$.
Now apply the Moser Lemma again to obtain a Hamiltonian isotopy that is identity
on $\Sigma$, from which we produce $\tilde V$ as claimed.

\end{proof}

\subsection{Existence of Diffeomorphic surfaces}\label{22}

We briefly remark on the consequences of the results in previous section coupled with actions of diffeomorphism groups; notation introduced
here will be used throughout the rest of the paper.

Recall from \eqref{e:D(M)} that $D(M)$ is the image of the  group of diffeomorphisms Diff(M) in Aut($H_2(M,\mathbb Z)$).
$D(M)$ defines a group action on the set of symplectic canonical classes $\mathcal K$ of $M$.   When $M$ has $b^+(M)=1$, up to sign, $D(M)$ acts transitively on $\mathcal K$ (see \cite{TJLL}, \cite{TJLL2}). For symplectic manifolds with $\kappa(M)=-\infty$ this result can be improved:

\begin{lem}[\cite{TJLL2}, \cite{TJLL}] \label{trans}

If $M$ has Kodaira dimension $\kappa(M)=-\infty$, then the action of $D(M)$ on $\mathcal K$ is transitive.
Furthermore, $D(M)$ is generated by reflections on $(-1-)$ and $(-2)$-smooth spherical classes.  Concretely,
these are $E_i$ and $H-E_i-E_j-E_k$, $i\neq j\neq k\neq i$ for rational manifolds and $E_i$, $F-E_i-E_j$, $i\neq j$
for irrational manifolds.
\end{lem}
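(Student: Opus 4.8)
The two assertions are essentially independent, and in both I would reduce the geometry to lattice theory on $H_2(M,\mathbb Z)$ with its intersection form $Q_M$. For the transitivity, recall that for a general $b^+=1$ manifold it is already recorded above that $D(M)$ acts transitively on $\mathcal K$ \emph{up to sign}, so the only thing to upgrade when $\kappa(M)=-\infty$ is the removal of the sign. The plan is to observe that $\mathcal K\cap(-\mathcal K)=\emptyset$ in this case: if $K=K_\omega$ then $\kappa(M)=-\infty$ forces $K\cdot[\omega]<0$, while for a rational or ruled surface the anticanonical class pairs \emph{positively} with every class in the symplectic cone $\mathcal C_M$; hence $-K$ pairs positively with every symplectic class and, were it a canonical class $K_{\omega'}$, we would get $-K\cdot[\omega']=K_{\omega'}\cdot[\omega']<0$, a contradiction. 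Since any $\sigma\in D(M)$ is induced by a diffeomorphism, it permutes symplectic structures and so maps $\mathcal K$ into itself. Thus given $K,K'\in\mathcal K$ with $\sigma(K)=\pm K'$, the alternative $\sigma(K)=-K'$ would force $-K'\in\mathcal K$, contradicting the disjointness; hence $\sigma(K)=K'$ and the action is honestly transitive.

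For the generation statement I would first pin down $D(M)$ exactly. Since $M$ is non-minimal rational or ruled, $Q_M$ is indefinite, and by the realization theorem of Wall, together with the refinement of Friedman--Morgan showing that for these surfaces no stabilization is needed, every isometry of $(H_2(M,\mathbb Z),Q_M)$ preserving the orientation of the positive cone $\{x:x^2>0\}$ is induced by an orientation-preserving self-diffeomorphism; conversely any diffeomorphism preserves this cone-orientation. Hence $D(M)=O^+(H_2(M,\mathbb Z),Q_M)$, the group of cone-preserving isometries. This identification simultaneously shows that each reflection in a $(-1)$- or $(-2)$-spherical class lies in $D(M)$: such a reflection fixes the orthogonal complement of a negative vector and so preserves the positive cone (for instance $R_{E_i}\co E_i\mapsto -E_i$ fixing $H$, and $R_{H-E_i-E_j-E_k}$ the Cremona involution), and each listed class is represented by a smoothly embedded sphere. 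The minimal cases $\mathbb{CP}^2$, $S^2\times S^2$ and minimal ruled surfaces are then disposed of directly, $O^+$ being trivial or generated by the single swap $R_{A-B}$.

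It then remains to prove the purely lattice-theoretic fact that $O^+$ is generated by the reflections in the $(-1)$-classes (the exceptional classes, equivalent to $E_i$) and the $(-2)$-spherical classes (equivalent to $H-E_i-E_j-E_k$, and note that $E_i-E_j$ is itself such a class). The plan is a normal-form reduction: given $\sigma\in O^+$, first use reflections to move $\sigma(H)$, a square-$(+1)$ class in the forward cone, back to $H$ --- the square-one classes in the forward cone form a single orbit of the reflection subgroup --- which replaces $\sigma$ by an isometry fixing $H$ and hence acting on $H^\perp\cong k\langle-1\rangle$ by a signed permutation of the $E_i$; such signed permutations are generated by $R_{E_i}$ (sign changes) and $R_{E_i-E_j}$ (transpositions). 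The irrational ruled case is entirely parallel using the basis $\{S,F,E_1,\dots,E_n\}$, the reflections $R_{E_i}$ and $R_{F-E_i-E_j}$, and normalizing the image of the fibre class $F$ first.

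The \textbf{main obstacle} is the identification $D(M)=O^+$. Realizing arbitrary cone-preserving isometries by diffeomorphisms --- in particular the $(-1)$-reflections, which cannot be produced inside a single $\overline{\mathbb{CP}^2}$ summand and genuinely need the indefiniteness of the whole form --- rests on the deep smooth four-manifold topology of Wall and Friedman--Morgan, including the verification that no $S^2\times S^2$-stabilization is required for exactly these rational and ruled surfaces. By comparison, the Coxeter-type generation of $O^+$ by the root reflections and the disjointness $\mathcal K\cap(-\mathcal K)=\emptyset$ are elementary once that identification is in hand.
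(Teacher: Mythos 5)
This lemma is not proved in the paper at all --- it is imported verbatim from \cite{TJLL2} and \cite{TJLL} --- so the comparison is really between your argument and the (gauge-theoretic) proofs in those references. Unfortunately your argument has a genuine gap at its very first step. The claim that $\kappa(M)=-\infty$ forces $K_\omega\cdot[\omega]<0$ is false: the Kodaira dimension is computed on the \emph{minimal model}, and on blow-ups the sign of $K_\omega\cdot[\omega]$ is not controlled. Concretely, on $M=\mathbb{CP}^2\#50\overline{\mathbb{CP}^2}$ the class $e=H-\tfrac{1}{10}\sum_{i=1}^{50}E_i$ satisfies $e^2>0$ and pairs positively with every $K_{st}$-exceptional class, so it lies in the symplectic cone, yet $K_{st}\cdot e=-3+5=2>0$ (and $-K_{st}\cdot e<0$, so the companion claim that the anticanonical class pairs positively with the whole symplectic cone fails as well; it also fails for minimal irrational ruled surfaces $\Sigma_g\times S^2$, $g\ge 2$, with a small fibre). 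Hence $\mathcal K\cap(-\mathcal K)=\emptyset$ is not established. Worse, the disjointness is simply false for some manifolds the lemma covers: on $S^2\times S^2$ the product of the antipodal maps is an orientation-preserving diffeomorphism inducing $-\mathrm{id}$ on $H_2$, so $-K\in\mathcal K$ whenever $K\in\mathcal K$. (The paper itself uses, in Section 5, that the symplectic canonical classes of a rational surface occur with both signs of the $H$-coefficient.) So transitivity cannot be obtained by showing the sign ambiguity is vacuous; in the relevant cases the sign flip is itself realized inside $D(M)$, and proving this is part of the content of Li--Liu's theorem.

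The generation half has a related problem. The assertion ``conversely any diffeomorphism preserves this cone-orientation'' is precisely the hard upper bound on $D(M)$, it is false for $S^2\times S^2$ and minimal ruled surfaces (the antipodal-type maps above swap the two components of the positive cone), and it sits in direct tension with your own appeal to Wall's realization theorem: where that theorem applies (to $Y\#(S^2\times S^2)$, e.g. $\mathbb{CP}^2\#k\overline{\mathbb{CP}^2}$ for $k\ge 2$) it realizes \emph{all} isometries of the intersection form, including cone-reversing ones, so it cannot by itself deliver the equality $D(M)=O^+$ that your reduction needs. Pinning down which isometries are \emph{not} realized --- equivalently, how diffeomorphisms can move the canonical class --- is where the Taubes/Seiberg--Witten input of \cite{TJLL}, \cite{TJLL2} and the Friedman--Morgan analysis enters, and your write-up treats that as a routine converse rather than as the crux. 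The purely lattice-theoretic normal-form reduction in your last step is fine as far as it goes, but it is the least problematic part of the lemma.
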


This result reduces the problem for symplectic manifolds with $\kappa(M)=-\infty$ to understanding those
classes $A\in H_2(M)$ admitting symplectic representatives for symplectic forms
$\omega$ within a fixed symplectic canonical class $K\in \mathcal K$.

\begin{cor}\label{ex}
 Let $(M,\omega)$ be a symplectic manifold with $\kappa(M)=-\infty$ and $A\in H_2(M,\mathbb Z)$
a homology class admitting an $\omega$-symplectic surface $V$.  Then for every symplectic canonical class $K$ there exists a symplectic form $\tilde \omega$ with $K_{\tilde\omega}=K$ admitting a $\tilde\omega$-symplectic surface diffeomorphic to $V$.

\end{cor}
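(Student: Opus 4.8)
The plan is to obtain $\tilde\omega$ not by deforming $\omega$ but by transporting it through a single orientation-preserving diffeomorphism, and to carry the surface $V$ along with it. The stability results of the previous sections are not actually needed for this reduction: since the conclusion only asks for a surface \emph{diffeomorphic} to $V$ (its homology class is allowed to change), a direct pullback argument suffices, with Lemma \ref{trans} supplying the one nontrivial input. The complementary ``fixed canonical class'' half of the picture, recorded in Corollary \ref{irrcor}, is what requires the heavier machinery, but it plays no role here.

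First I would invoke Lemma \ref{trans}: because $\kappa(M)=-\infty$, the group $D(M)$ acts transitively on the set $\mathcal{K}$ of symplectic canonical classes. Both $K_\omega$ and the prescribed class $K$ lie in $\mathcal{K}$, so there is $\sigma\in D(M)$ carrying one to the other. Unwinding the definition \eqref{e:D(M)} of $D(M)$ together with the duality between the $f_*$-action on $H_2(M,\mathbb{Z})$ and the induced action on $H^2(M,\mathbb{Z})$, this yields a diffeomorphism $g\in\textit{Diff}^+(M)$ with $g^*K_\omega=K$. Next I would set $\tilde\omega=g^*\omega$. Since $g$ preserves orientation, $(g^*\omega)^2=g^*(\omega^2)>0$, so $\tilde\omega$ is again an orientation-compatible symplectic form. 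Choosing $J$ compatible with $\omega$, the pulled-back almost complex structure $dg^{-1}\circ J\circ dg$ is compatible with $\tilde\omega$, and naturality of the first Chern class under $dg\colon (TM,dg^{-1}\circ J\circ dg)\to g^*(TM,J)$ gives $K_{\tilde\omega}=g^*K_\omega=K$, as required. Finally, $g\colon (M,\tilde\omega)\to (M,\omega)$ is a symplectomorphism, so $g^{-1}(V)$ is a $\tilde\omega$-symplectic surface, and it is diffeomorphic to $V$ via $g$. This exhibits the desired pair $(\tilde\omega,g^{-1}(V))$.

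I expect the only genuinely substantive point to be the full transitivity asserted in Lemma \ref{trans}: it is precisely the hypothesis $\kappa(M)=-\infty$ that upgrades the generic ``transitive up to sign'' behaviour of $D(M)$ on $\mathcal{K}$ (for $b^+=1$) to honest transitivity, and this is exactly what allows an arbitrary target $K\in\mathcal{K}$ to be reached from $K_\omega$ by an orientation-preserving diffeomorphism. The remaining work—fixing the pullback/pushforward conventions so that the canonical class lands on $K$ rather than on some variant, and verifying that $g^{-1}(V)$ is symplectic—is routine bookkeeping.
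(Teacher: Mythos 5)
Your argument is correct and is essentially the paper's own proof: the paper likewise invokes Lemma \ref{trans} to find an element of $D(M)$ carrying $K$ to $K_\omega$, realizes it by a diffeomorphism, pulls back $\omega$, and transports $V$. You have merely spelled out the bookkeeping (orientation, compatible $J$, naturality of the canonical class) that the paper leaves implicit.
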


\begin{proof}
Lemma \ref{trans} provides for an element of $D(M)$ which takes $K$ to $K_\omega$.  This element covers a diffeomorphism; let $\tilde\omega$ be the pull-back of $\omega$ under this map.  The result then follows.

\end{proof}

For a general symplectic manifold, one may consider the following subset of $\Omega(M)$:

\begin{definition} 

Let $\mathcal D(\omega, A)\subset \Omega(M)$ be the set of symplectic forms on $M$ satisfying the following:  For every $\alpha\in \mathcal D(\omega, A)$ there is  a symplectic form  $\beta$ in the Diff$(M)-$orbit of $\alpha$ which has canonical class $K_\beta=K_\omega$ and $[\beta]\cdot A> 0$.

Denote by $\mathcal D^d(\omega,A)\subset \mathcal D(\omega,A)$ the set of classes such that $\beta$ is deformation equivalent to $\omega$.

\end{definition}

Thus $\mathcal D(\omega, A)$ is the orbit under the action of Diff($M)$ of the set $\{\beta\in\Omega(M)\;|\;K_\beta=K_\omega,\;[\beta]\cdot A>0\}$ whereas $\mathcal D^d(\omega,A)$ is the orbit of the path connected component of $\{\beta\in\Omega(M)\;|\;K_\beta=K_\omega,\;[\beta]\cdot A>0\}$ containing $\omega$.  The following results extend the stability results from just the orbit of $\omega$ to these larger sets when
 $\kappa(M)=-\infty$, as a consequence of Lemma \ref{trans}.

\begin{lem} Let $(M,\omega)$ be a symplectic manifold with $\kappa(M)=-\infty$.
Then $\mathcal D(\omega, A)=\mathcal D^d(\omega, A)$ and the restriction of the map
 \[
\begin{array}{ccc}
\Omega(M)&\rightarrow& H^2(M,\mathbb R)\\\omega&\mapsto &K_\omega
\end{array}
\]
to $\mathcal D(\omega, A)$ is onto the set of symplectic canonical
classes.

\end{lem}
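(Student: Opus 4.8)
The plan is to prove the two claimed statements separately, both as consequences of the transitivity of the $D(M)$-action established in Lemma \ref{trans}. First I would unwind the definitions. Recall that $\mathcal D(\omega,A)$ is the $\mathrm{Diff}(M)$-orbit of the set
\[
\mathcal S=\{\beta\in\Omega(M)\;|\;K_\beta=K_\omega,\;[\beta]\cdot A>0\},
\]
while $\mathcal D^d(\omega,A)$ is the $\mathrm{Diff}(M)$-orbit of the path-connected component $\mathcal S_0\subset\mathcal S$ containing $\omega$. The inclusion $\mathcal D^d(\omega,A)\subseteq\mathcal D(\omega,A)$ is immediate, so the content of the first claim is the reverse inclusion. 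The key input is that when $\kappa(M)=-\infty$, any two symplectic forms with the same canonical class are deformation equivalent (this is cited in the excerpt via \cite{M1},\cite{KKP},\cite{TJLL2} and used in Corollary \ref{irrcor}).

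To establish $\mathcal D(\omega,A)\subseteq\mathcal D^d(\omega,A)$, I would take $\alpha\in\mathcal D(\omega,A)$ and an associated $\beta\in\mathcal S$ in the $\mathrm{Diff}(M)$-orbit of $\alpha$ with $K_\beta=K_\omega$ and $[\beta]\cdot A>0$. Since $\kappa(M)=-\infty$, the forms $\beta$ and $\omega$ are deformation equivalent, and in fact the whole set $\mathcal S$ lies in a single deformation class. The point is to upgrade this to path-connectedness \emph{within} $\mathcal S$: one needs a path of symplectic forms from $\beta$ to $\omega$ that preserves both $K_\bullet=K_\omega$ and positivity against $A$. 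Any deformation automatically preserves the canonical class (it varies continuously in the integral lattice $H^2(M,\mathbb Z)$ and hence is locally constant), so the first condition is free. The positivity condition $[\omega_t]\cdot A>0$ along the path can be arranged because the deformation class, having fixed $K$, is determined by the symplectic cone which is convex in each $K$-chamber; more concretely, the set $\{[\omega']:K_{\omega'}=K_\omega\}$ is a connected (indeed convex, by the structure of the $K$-symplectic cone for $\kappa=-\infty$) open subset of $H^2(M,\mathbb R)$, and the half-space $\{\,\cdot\,A>0\}$ intersects it in a connected set containing both $[\beta]$ and $[\omega]$. Thus $\beta\in\mathcal S_0$, so $\alpha\in\mathcal D^d(\omega,A)$, giving equality.

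For the surjectivity statement, I would argue directly from Lemma \ref{trans}. Let $K'$ be any symplectic canonical class of $M$. By transitivity of the $D(M)$-action on $\mathcal K$, there is $\sigma\in D(M)$ with $\sigma(K_\omega)=K'$, and $\sigma=f_*$ for some $f\in\mathrm{Diff}^+(M)$. Set $\beta'=(f^{-1})^*\omega$; then $K_{\beta'}=\sigma(K_\omega)=K'$, and since $A$ can be replaced by its image under the induced action (or one simply notes $[\beta']\cdot (\sigma A)=[\omega]\cdot A>0$), the form $\beta'$ witnesses that $\alpha':=(f^{-1})^*\omega$, or a suitable diffeomorphic translate, lies in $\mathcal D(\omega,A)$ and maps to $K'$ under the canonical-class map. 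Hence every symplectic canonical class is hit.

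The main obstacle I anticipate is not the surjectivity, which is a clean consequence of transitivity, but the path-connectedness step in the first claim: one must be careful that the deformation equivalence of $\beta$ and $\omega$ coming from \cite{M1},\cite{KKP},\cite{TJLL2} can be realized by a path staying inside $\mathcal S$ rather than merely inside $\Omega(M)$. The cleanest way to handle this is to invoke the explicit description of the $K$-symplectic cone for manifolds with $\kappa(M)=-\infty$ (rational or ruled), where this cone is connected and the forms within a fixed $K$-class are all mutually deformation equivalent through forms preserving $K$; intersecting with the open half-space $[\,\cdot\,]\cdot A>0$ then keeps connectedness since $\omega$ and $\beta$ both lie in it. I would make sure to cite the relevant structural result on the symplectic cone rather than leaving the connectedness of $\mathcal S_0$ implicit.
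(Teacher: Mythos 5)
The paper states this lemma without proof, as an immediate consequence of Lemma \ref{trans} and the fact (\cite{MP94}) that for $\kappa(M)=-\infty$ any two symplectic forms with the same canonical class are deformation equivalent; your proposal fleshes out exactly that intended argument, so it is essentially the paper's approach. The only quibbles are cosmetic: in the surjectivity step the witness placing $(f^{-1})^*\omega$ in $\mathcal D(\omega,A)$ is simply $\omega$ itself (no need to pass to $\sigma A$), and your extra care about path-connectedness within $\mathcal S$ goes beyond what the paper's literal definition of $\mathcal D^d(\omega,A)$ ("$\beta$ deformation equivalent to $\omega$") requires, though it is warranted by the paper's own paraphrase in terms of path components.
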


Theorem \ref{stability} can be used to obtain the
 following general existence principle for manifolds with $b^+(M)=1$.

\begin{prop}\label{expri} Let $(M,\omega)$ be a symplectic manifold with $b^+(M)=1$ and $A\in H_2(M,\mathbb Z)$
a homology class admitting an $\omega$-symplectic surface $V$.
 Let  $\tilde \omega\in\mathcal D^d(\omega, A)$.  Then there exists an $ \tilde \omega-$symplectic surface $\tilde V$ which is diffeomorphic  to $V$.

\end{prop}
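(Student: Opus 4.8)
The plan is to reduce everything to Theorem \ref{stability} by using the diffeomorphism hidden in the definition of $\mathcal D^d(\omega,A)$. By hypothesis $\tilde\omega\in\mathcal D^d(\omega,A)$, so unwinding the definition there is a diffeomorphism $\psi\in\Diff(M)$ and a symplectic form $\beta=\psi^*\tilde\omega$, lying in the $\Diff(M)$-orbit of $\tilde\omega$, such that $K_\beta=K_\omega$, $[\beta]\cdot A>0$, and $\beta$ is deformation equivalent to $\omega$. I record that these are precisely the ingredients needed to apply the stability theorem to the trivial one-vertex configuration.

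First I would regard the single connected embedded $\omega$-symplectic surface $V$ in class $A$ as a curve configuration realizing the homological configuration $G$ consisting of a single vertex labelled by $A$; condition (d) of Definition \ref{d:ConfR} is automatic for one symplectic surface, since any compatible $J$ can be perturbed to make $V$ $J$-holomorphic. Since $[\beta]\cdot A>0$, the form $\beta$ is $G$-positive, and it is deformation equivalent to $\omega$ by the choice above. Theorem \ref{stability} then supplies a $\beta$-symplectic curve configuration $\tilde V'$ realizing $G$, that is, a connected embedded $\beta$-symplectic surface in class $A$, which is moreover smoothly isotopic to $V$.

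Next I would transport $\tilde V'$ back to $\tilde\omega$ via $\psi$. Because $\beta=\psi^*\tilde\omega$, the diffeomorphism $\psi$ carries $\beta$-symplectic submanifolds to $\tilde\omega$-symplectic ones: at each point $p\in\tilde V'$ and for $v,w\in T_p\tilde V'$ one has $\tilde\omega_{\psi(p)}(d\psi_p v, d\psi_p w)=\beta_p(v,w)$, so the nondegeneracy of $\beta|_{\tilde V'}$ passes to $\tilde\omega|_{\psi(\tilde V')}$. Hence $\tilde V:=\psi(\tilde V')$ is a $\tilde\omega$-symplectic surface. Finally $\tilde V$ is diffeomorphic to $\tilde V'$ through $\psi$, while $\tilde V'$ is smoothly isotopic, and in particular diffeomorphic, to $V$; composing, $\tilde V$ is diffeomorphic to $V$, as required.

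The argument is short because the genuine content lives in Theorem \ref{stability}; the only point requiring care is the bookkeeping around the $\Diff(M)$-orbit. The main (and rather mild) obstacle is to confirm that the form $\beta$ provided by the definition of $\mathcal D^d(\omega,A)$ really does meet the deformation-equivalence and $G$-positivity hypotheses of Theorem \ref{stability}, and that pushing a symplectic submanifold forward under $\psi$ preserves the symplectic condition — both of which follow immediately once the definitions are unwound, so no further analytic input beyond the stability theorem is needed.
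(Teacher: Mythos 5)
Your proposal is correct and follows essentially the same route as the paper's own proof: pick a form $\beta$ in the $\Diff(M)$-orbit of $\tilde\omega$ that is deformation equivalent to $\omega$ and pairs positively with $A$, apply Theorem \ref{stability} to obtain a $\beta$-symplectic surface smoothly isotopic to $V$, and transport it to $\tilde\omega$ by the diffeomorphism. The extra bookkeeping you supply (the one-vertex configuration and the pushforward computation) is exactly what the paper's terser argument leaves implicit.
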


\begin{proof}  Let $\alpha$ be any symplectic form in the Diff$(M)$-orbit of $\tilde\omega$ such that $\alpha$ is deformation equivalent to $\omega$  and $[\alpha]$ pairs positively with $A$.  Then by Theorem \ref{stability} there exists an $\alpha$-symplectic submanifold $V_\alpha$ smoothly isotopic to $V$.  Combining this with the diffeomorphisms taking $\alpha$ to $\tilde\omega$, the result follows.
\end{proof}

Under the additional assumption that $\kappa(M)=-\infty$, two
symplectic forms with a common canonical class are deformation
equivalent \cite{MP94}, hence the above result can be sharpened.

\begin{lem}\label{l:3.11}
Let $(M,\omega)$ be a symplectic manifold  with $\kappa(M)=-\infty$
and $A\in H_2(M,\mathbb Z)$ a homology class admitting an $\omega$-symplectic
surface $V$. Let  $\tilde \omega\in\mathcal D(\omega, A)$.  Then
there exists an $ \tilde \omega-$symplectic surface $\tilde V$ which
is diffeomorphic  to $V$.
%

\end{lem}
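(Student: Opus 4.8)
The plan is to reduce Lemma \ref{l:3.11} directly to Proposition \ref{expri}, by observing that under $\kappa(M)=-\infty$ the hypothesis $\tilde\omega\in\mathcal D(\omega,A)$ already forces $\tilde\omega$ into the smaller set $\mathcal D^d(\omega,A)$. Recall that Proposition \ref{expri} requires only $b^+(M)=1$ together with membership in $\mathcal D^d(\omega,A)$, and that every symplectic $4$-manifold with $\kappa(M)=-\infty$ (rational or irrational ruled) satisfies $b^+(M)=1$. So the entire content of the lemma is the upgrade from $\mathcal D(\omega,A)$ to $\mathcal D^d(\omega,A)$, after which Proposition \ref{expri} finishes the argument verbatim.

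First I would invoke the preceding lemma, which asserts precisely the equality $\mathcal D(\omega,A)=\mathcal D^d(\omega,A)$ when $\kappa(M)=-\infty$. For the reader it is worth unwinding why this holds. By definition, for $\tilde\omega\in\mathcal D(\omega,A)$ there is a form $\beta$ in the $\mathrm{Diff}(M)$-orbit of $\tilde\omega$ with $K_\beta=K_\omega$ and $[\beta]\cdot A>0$, whereas membership in $\mathcal D^d(\omega,A)$ demands in addition that $\beta$ be deformation equivalent to $\omega$. The decisive input is the theorem of \cite{MP94}: when $\kappa(M)=-\infty$, any two symplectic forms sharing a canonical class are automatically deformation equivalent. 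Since $K_\beta=K_\omega$, this forces $\beta$ to be deformation equivalent to $\omega$, so the two defining conditions coincide and $\tilde\omega\in\mathcal D^d(\omega,A)$.

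With $\tilde\omega\in\mathcal D^d(\omega,A)$ established, I would simply apply Proposition \ref{expri} to the class $A$ and its given $\omega$-symplectic representative $V$, producing a $\tilde\omega$-symplectic surface $\tilde V$ diffeomorphic to $V$. Concretely this amounts to choosing an orbit representative $\alpha$ of $\tilde\omega$ that is deformation equivalent to $\omega$ and pairs positively with $A$, running Theorem \ref{stability} along that deformation to obtain an $\alpha$-symplectic surface smoothly isotopic to $V$, and then pushing it forward by the diffeomorphism carrying $\alpha$ to $\tilde\omega$.

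I do not expect a genuine obstacle here: the argument is a short formal reduction, and its only deep ingredient—the deformation-equivalence theorem of \cite{MP94}—is already packaged into the preceding lemma. The single point that should be stated with care is that $\kappa(M)=-\infty$ guarantees $b^+(M)=1$, since this is exactly what licenses the appeal to Proposition \ref{expri}; this is standard for rational and ruled surfaces and requires no separate work.
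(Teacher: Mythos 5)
Your proposal is correct and matches the paper's intended argument: the paper states Lemma \ref{l:3.11} immediately after observing, via \cite{MP94}, that $\kappa(M)=-\infty$ forces $\mathcal D(\omega,A)=\mathcal D^d(\omega,A)$, so the lemma is exactly Proposition \ref{expri} applied after this identification. Your unwinding of why the two sets coincide and the remark that $\kappa(M)=-\infty$ gives $b^+(M)=1$ are the same (implicit) steps the paper relies on.
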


Again the light cone lemma allows us to formulate a simple
corollary when $\kappa(M)=-\infty$.

\begin{cor}
 If $A\cdot A \geq 0$, then for any  symplectic form $\tilde \omega$,
there exists an $ \tilde \omega-$symplectic surface $\tilde V$ which is  diffeomorphic to $V$.

\end{cor}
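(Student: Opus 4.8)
The plan is to deduce this corollary directly from Lemma \ref{l:3.11} by showing that the hypothesis $A\cdot A\ge 0$ forces \emph{every} symplectic form $\tilde\omega$ to lie in $\mathcal D(\omega,A)$. Recall that $\tilde\omega\in\mathcal D(\omega,A)$ means there is a symplectic form $\beta$ in the $\Diff(M)$-orbit of $\tilde\omega$ with $K_\beta=K_\omega$ and $[\beta]\cdot A>0$. Once such a $\beta$ is produced, Lemma \ref{l:3.11} applies verbatim and yields the desired $\tilde\omega$-symplectic surface $\tilde V$ diffeomorphic to $V$.

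First I would arrange the canonical class. Since $\kappa(M)=-\infty$, Lemma \ref{trans} tells us that $D(M)$ acts transitively on the set $\mathcal K$ of symplectic canonical classes. Exactly as in the proof of Corollary \ref{ex}, transitivity provides an orientation-preserving diffeomorphism $\phi$ for which $\beta:=\phi^*\tilde\omega$ is a symplectic form in the $\Diff(M)$-orbit of $\tilde\omega$ with $K_\beta=K_\omega$. It remains only to verify the positivity $[\beta]\cdot A>0$.

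This is where the hypothesis $A\cdot A\ge 0$ and the light cone lemma enter. Because $A$ is represented by an $\omega$-symplectic surface, $A\neq 0$ and $[\omega]\cdot A>0$. Since $M$ has $b^+(M)=1$, the intersection form has type $(1,b^-)$, so $\{x:x\cdot x>0\}$ has two components; the standing convention that all forms in $\Omega(M)$ are orientation-compatible places $[\omega]$, $[\tilde\omega]$, and hence $[\beta]=\phi^*[\tilde\omega]$ (as $\phi$ preserves orientation), in one and the same component, the forward cone $C^+$, where each class is strictly positive. The conditions $A\cdot A\ge 0$ and $[\omega]\cdot A>0$ put $A$ in the closed forward cone $\overline{C^+}\setminus\{0\}$. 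The light cone lemma (Lemma 3.1, \cite{TJLL}) then gives $[\beta]\cdot A\ge 0$, with equality only if both classes are null and proportional; since $[\beta]\cdot[\beta]>0$, we obtain the strict inequality $[\beta]\cdot A>0$.

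Combining the two steps, $\beta$ lies in the $\Diff(M)$-orbit of $\tilde\omega$, has $K_\beta=K_\omega$, and satisfies $[\beta]\cdot A>0$, so $\tilde\omega\in\mathcal D(\omega,A)$; Lemma \ref{l:3.11} now finishes the proof. The only genuinely delicate point is the cone bookkeeping of the previous paragraph: one must be sure that $A$ and $[\beta]$ sit in the \emph{same} component of the positive cone, so that the light cone lemma yields a positive rather than a negative pairing, and that the non-strict inequality can be upgraded to a strict one. Both follow from $[\omega]\cdot A>0$ together with $[\beta]\cdot[\beta]>0$, so no further hypothesis beyond $A\cdot A\ge 0$ is needed.
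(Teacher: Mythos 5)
Your overall route is exactly the paper's: the corollary sits immediately after Lemma \ref{l:3.11} with the one‑line indication that the light cone lemma supplies the missing hypothesis when $A\cdot A\ge 0$, i.e.\ one shows that every $\tilde\omega$ lies in $\mathcal D(\omega,A)$ by first normalizing the canonical class via Lemma \ref{trans} (as in Corollary \ref{ex}) and then extracting $[\beta]\cdot A>0$ from the light cone lemma. The one place your write‑up is not sound as stated is the claim that orientation‑compatibility alone places $[\omega]$, $[\tilde\omega]$ and $[\beta]$ in the same component of the positive cone. That is false: $-\omega$ is an orientation‑compatible symplectic form whose class lies in the opposite component, and orientation‑preserving diffeomorphisms (e.g.\ complex conjugation on $\CP^2$, which acts as $-\id$ on $H^2$) can interchange the two components. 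What actually pins $[\beta]$ to the forward component of $[\omega]$ is the normalization $K_\beta=K_\omega$: since $\kappa(M)=-\infty$, two symplectic forms with the same canonical class are deformation equivalent \cite{MP94}, so $[\beta]$ is joined to $[\omega]$ by a path of classes of positive square and therefore lies in the same component --- equivalently, this is the paper's observation that $\mathcal D(\omega,A)=\mathcal D^d(\omega,A)$ in this setting. With that substitution your light‑cone bookkeeping ($A$ in the closed forward cone, $[\beta]^2>0$, hence the pairing is strictly positive) is correct, and the corollary follows from Lemma \ref{l:3.11} exactly as you say.
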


\section{ Lagrangian ADE-configurations}\label{s:ADE}

In this section, as an application of the stability results, we explain how to obtain Lagrangian
ADE-configurations . This
is closely related to the conifold transition, which we will review
in Section \ref{s:conifold}, where a slight refinement of the
deformation type result in \cite{OO05} is shown. A stability result
of Lagrangian ADE-configurations will be explained in
\ref{s:conifold}.  This will eventually lead to a proof of Corollary
\ref{c:ADELag}.

\subsection{Conifold transitions and stability of Lagrangian configurations}\label{s:conifold}

 By definition, in real dimension 4, an
\textit{ADE-configuration of Lagrangian spheres} is a plumbing of
Lagrangian spheres as $A_n$, $n\geq1$; $D_n$, $n\geq4$; or $E_{6,7,8}$
Dynkin diagrams.  These are the \textit{smoothing} of simple singularities of
type $C^2/\Gamma$, where $\Gamma$ is a finite subgroup of $SU(2)$.
On the other hand, one may perform a minimal \textit{resolution} of such
singularities, which incurs a tree-like configuration of
$(-2)$-rational curves, which is of the same diffeomorphism type as
the Lagrangian plumbings in smoothings.  One may replace a
neighborhood of a smoothing by the resolution, or vice versa. Such a
surgery is called a \textit{conifold transition}. See \cite{IR}, \cite{OO05}
for more background on conifold transitions over surfaces.

Note that performing the conifold transition as a symplectic cut-and-paste
surgery has the following features:

\begin{enumerate}[(i)]
   \item If there are $\w$ and $\wt\w$-Lagrangian ADE-configurations $\L=\bigcup_i\{L_i\}$ and $\wt\L=\bigcup_i\{\wt L_i\}$, respectively, then one may choose an appropriate neighborhood of $N$ and $\wt N$ for $\L$ and $\wt\L$ to perform conifold transitions.  Symplectically the conifold transitions remove $N$ and $\wt N$ and replace them by a neighborhood of a symplectic ADE-configuration of (-2)-spheres.  By the Lagrangian neighborhood theorem for configurations (Proposition 7.3 of \cite{IR}) such a symplectic configuration can be chosen isomorphic for both surgeries on $\L$ and $\wt\L$.  In other words, suppose $\w'$ and $\wt\w'$ are the symplectic forms after conifold transitions, where
       $\bigcup_i\{V_i\}$ and $\bigcup_i\{\wt V_i\}$ are the symplectic configurations, then one may choose the surgeries so that $\w'(V_i)=\wt\w'(V_i)$ for all $i$.


   \item Moreover, let $\{[L_i]\}_{i=1}^n$ span a subspace $\cL\subset H_2(M,\R)$.  One may consider its orthogonal complement $\cL^\perp$ under Poincare pairing.  The conifold transition changes the symplectic form, adopting notation from the previous paragraph, in such a way that $\w|_{\cL^\perp}=\w'|_{\cL^\perp}$.  This applies equally well in the other direction of the transition, that is, when changing
        a resolution $\{V_i\}$ to a smoothing.  As a consequence, if $[\w]=[\wt\w]$ and they each admit a symplectic ADE-configuration $\bigcup_i\{V_i\}$ and $\bigcup_i\{\wt V_i\}$ so that $[V_i]=[\wt V_i]$, after changing both configurations to smoothings the new symplectic forms are again cohomologous.

\end{enumerate}

In our situation, we would like to understand the connection between
conifold transition and symplectic deformations.  Symplectically, Ohta and Ono
showed in \cite{OO05} that
any weak/strong symplectic filling of
the link $(\cL,\lambda)$ of an ADE-singularity has a unique
symplectic deformation type, while the deformation is along a family of
weak/strong symplectic fillings.  Here $\lambda$ is the contact form on
the link $\cL$.  We refer readers to \cite{OO05} (or some standard
reference on contact geometry and symplectic fillings, e.g.
\cite{Et04}) for relevant definitions.

Because of the local feature of conifold transitions, it is rather
conceivable that it can be achieved by a compactly supported
symplectic deformation.  In particular this is true for an $A_1$
smoothing in view of symplectic cuts.  Unfortunately we are unable
to prove this: note that this is not a local question, for example,
one cannot obtain a compactly supported symplectic deformation in
$T^*S^2$ so that the zero section becomes symplectic while a fiber
is preserved as a Lagrangian plane due to homological obstructions.
However, we show the following variant of Ohta and Ono's result.  We
emphasize in this result that there is no guarantee that $\w_1$ is the
symplectic form obtained by conifold transition (as a surgery).

\blem\label{l:con=def}

Let $(W,\w)$ be a neighborhood of an ADE-symplectic configuration
$V$ which is a strong filling of $(\cL,\lambda)$.  Then there is a
compactly supported symplectic deformation $\{\w_t\}_{0\le t\le1}$
on $V$, so that $\w_0=\w$ and $V\subset (W, \w_1)$ is a Lagrangian
ADE-plumbing.

The reverse procedure also exists, that is, one has a compactly
supported deformation which transforms smoothings into resolutions.
\elem

\bpf

We only prove the direction from resolution to smoothings, the other
direction is identical.

First perform a conifold transition to $V$ which incurs a symplectic
manifold $(\wt W,\wt\w)$ diffeomorphic to $W$ with a smoothing
configuration $\wt V$.  One identifies $W$ and $\wt W$ smoothly so
that $\wt V$ is identified to $V$, hence the result is
a symplectic form $\w'$ on $W$ so that $V$ is a $\w'$-Lagrangian
configuration and $\w=\w'$ near $\partial W$
since conifold transition only happens in
the interior.

From \cite{OO05}, we have a deformation $\{\Omega_t\}$ which is a
symplectic deformation from $\w$ to $\w'$, where $\Omega_t$ are all
strong fillings of $(\cL,\lambda)$.  By definition, this means in a
collar neighborhood $U$ of $\cL$, with $U\cap V=\emptyset$,
$\Omega_t=d\lambda_t$, and $\lambda_t$ is an extension of
$\lambda$ on $\cL$.  Take $\wt X_t$ so that $i_{\wt
X_t}\Omega_t=\frac{d}{dt}\lambda_t$.  Cut off $\wt X_t$ so that one
obtains $X_t$ which is supported in $U$ and equals $\wt X_t$ in a
smaller $U'\subset U$.  Note that the right hand side vanishes
identically on $\cL$, the flow of $X_t$ is supported away from $\cL$
and creates a family of diffeomorphisms $\varphi_t$ such that
$(\varphi_t)_*\Omega_t=\w$ in $U'$.  Hence
$\{(\varphi_t)_*(\Omega_t)\}_{0\le t\le 1}$ is a compactly supported
deformation of $\w$, while $\w_1=(\varphi_1)_*(\Omega_1)$ contains a
Lagrangian configuration, since $(\varphi_t)_*\Omega_t=\Omega_t$ in the complement of $U$

\epf


With this understood, we may show:

\bthm\label{t:stabilityADE} ADE-configurations of Lagrangian spheres
have the stability property in symplectic manifolds $M$ with
$b^+(M)=1$. Moreover, if $D\subset M$ is a smooth
symplectic divisor, then the stability holds in its complement.\ethm

\bpf We give the proof in the presence of $D$, the case when $D$ is
empty is only easier.  Given a symplectic manifold $(M,\w)$, suppose
it has an ADE-configuration $\L=\bigcup_{i=1}^n\{L_i\}$ consisting
of Lagrangian spheres $L_i$ in $M\backslash D$.  Consider
$\widetilde\w$ deformation equivalent to $\w$ through a compactly
supported deformation family in $M\backslash D$, where
$\wt\w([L_i])=0$.  We would like to show that there exists an
ADE-plumbing of Lagrangian spheres in the complement of $D$.


We proceed as follows.  Apply first Lemma \ref{l:con=def}
to a neighborhood $\cN$ of $\L$, which turns it into a resolution.
This results in a new symplectic form
$\w'$, as well as a symplectic configuration $\bigcup_i V_i$.  Note
that by choosing $\cN$ sufficiently small, one may assume $\w$ and
$\w'$ are $C^0$-close, which is equivalent to saying $\w'|_{\cL}$
being small.

When $\epsilon=||\w-\w'||_{C^0}$ is sufficiently small,
one allows a symplectic deformation from $\wt\w$ to $\wt\w'$, so that
$[\wt\w']|_{\cL}=[\w']|_{\cL}$,
$[\wt\w']|_{\cL^\perp}=[\wt\w]|_{\cL^\perp}$ and $\epsilon>||\wt\w-\wt\w'||_{C^0}$.
This can be achieved by packing-blowup correspondence \cite{MP94} for the following reason.
Both $\cL$ and $\cL^\perp$ are spanned by subcollections in $\{H,E_1,\dots,E_n\}$, while one has the freedom
to adjust the symplectic areas of each:
sizes of ball-packings corresponding to symplectic areas of $E_i$ which can be adjusted slightly
by the continuity of packing, while the area of $H$ can be adjusted by a global rescaling.  Also, note that
when $\epsilon$ is sufficiently small, $D$ is preserved as a symplectic divisor.

 Now apply the Stability Theorem \ref{ADEstab} for the symplectic configuration $\bigcup_i V_i$
 from $\w'$ to $\wt\w'$ and divisor $D$ as $\Sigma$.  This implies the existence of a symplectic configuration with
 respect to $\wt\w'$ in the complement of $D$.  One can then use Lemma \ref{l:con=def} in a reverse direction on this
 configuration to obtain a smoothing (Lagrangian configuration of spheres) in $M$ with a certain symplectic form $\wt\w''$.
 Note that $\wt\w$ and $\wt\w''$ are deformation equivalent by concatenating the symplectic
 deformation from $\wt\w$ to $\wt\w'$, and they are cohomologous by (i) and (ii) (because this reverse
 conifold transition only ``erases" the symplectic form on $\cL$ and leaves $\cL^\perp$ invariant).

 Applying Thm. 1.2.12, \cite{M3}, one may deform such a symplectic
 deformation to an isotopy of symplectic forms $\Omega_t$, $\Omega_0=\wt\w$ and
 $\Omega_1=\wt\w''$ by symplectic inflations while preserving $D$ as a symplectic divisor.
 Along this isotopy of symplectic forms, $D$ has constant symplectic area.
 Therefore, one may choose a diffeomorphism $\tau_t$ supported near $D$,
 so that $\tau_t^*(\Omega_t)$ is constant on $D$.
 Now Moser's method on $\tau_t^*(\Omega_t)$ yields an isotopy
 $\phi_t$ which is identity restricted to $D$, where $(\phi\circ\tau_1)^*(\Omega_1)=\wt\w$.
 Then the $\phi_1$-image of the constructed $\wt\w''$-Lagrangian configuration is as desired in the complement of $D$.

\epf

\subsection{Existence}

In \cite{LW}, the second and third authors derived a necessary and
sufficient condition for $A\in H_2(\CP^2\# k\overline{\CP}^2,\Z)$ to
admit a Lagrangian spherical representative: this holds if and only
if $A$ is $D(M)$-equivalent to $E_1-E_2$ or $H-E_1-E_2-E_3$ and
$[\omega]\cdot A=0$.  With the stability result above, we may
improve the existence part into existence of ADE-smoothings.
\cite{OO05} explained how to compactify an ADE-type smoothing into a
rational manifold of diffeomorphism type
$\CP^2\#(n+3)\overline{\CP^2}$.  After compactification, a
symplectic neighborhood of the Lagrangian configuration can be
recovered by removing a set of smooth symplectic divisors from the
rational surface.  The homology classes of these divisors are listed
as follows:

\begin{itemize}
  \item $A_n: H, H-E_1-\dots-E_{n+1}$;
  \item $D_n: E_1, E_2-E_1, H-E_1-E_2-E_3, 2H-E_1-E_2-E_4-\dots-E_{n+3}$;
  \item $E_n: E_1, E_2-E_1, E_3-E_2-E_1, 3H-2E_3-E_4-\dots-E_9(-E_{10}-E_{11})$.
\end{itemize}

Note that for the case of $A_n$ we have used a particularly simple
set of divisors slightly different from that in \cite{OO05}, where
we have $\CP^2\#(n+1)\overline{\CP^2}$ as the ambient rational
surface.  The corresponding homology classes of the Lagrangian
ADE-configurations are given as follows:

$$A_n: \xymatrix{
&{}_{E_1-E_2}\hskip 2mm\bullet\ar@{-}[r]
&{\underset{E_2-E_3}{\bullet}}\ar@{-}[r] &\cdots\ar@{-}[r]
&{\underset{E_{n-1}-E_{n}}{\bullet}}\ar@{-}[r] &\bullet_{\hskip2mm
E_{n}-E_{n+1}}}$$

$$D_n: \xymatrix@C=.9cm@R=0.2cm{
& &\overset{E_4-E_5}{\bullet}\ar@{-}[d] \\
&{}_{-H_{456}}\hskip 2mm\bullet\ar@{-}[r]
&{\underset{E_6-E_4}{\bullet}}\ar@{-}[r]
&\underset{E_7-E_6}{\bullet}\ar@{-}[r]&\cdots\ar@{-}[r]
&\bullet_{\hskip2mm E_{n+3}-E_{n+2}}}$$

\begin{equation*}
E_{6(7,8)}: \xymatrix@C=.5cm@R=0.2cm{
& & &\overset{E_4-E_7}{\bullet}\ar@{-}[d] \\
&{}_{-H^2_{4-9}}\hskip 1mm\bullet\ar@{-}[r]
&{\underset{H_{479}}{\bullet}}\ar@{-}[r]
&\underset{E_6-E_7}{\bullet}\ar@{-}[r]&\underset{E_5-E_6}{\bullet}\ar@{-}[r]
&\underset{E_8-E_5}{\bullet}(\ar@{-}[r]&\underset{H_{89(10)}}{\bullet}\ar@{-}[r]&\underset{E_{10}-E_{11}}{\bullet})}
\end{equation*}

Here $H_{ijk}$ and $H^2_{4-9}$ are shorthand for $H-E_i-E_j-E_k$ and $2H-E_4-\dots-E_9$, respectively.
Motivated by these explicit identifications, we define:

\begin{definition}
  A set of homology classes $\{l_i\}_{i=1}^n\subset H_2(\CP^2\# n\overline{\CP}^2,\Z)$ is a
  {\textbf{homological Lagrangian ADE-configuration}}
  if there is a $D(M)$-equivalence $\tau$ on $H_2(\CP^2\# k\overline{\CP}^2)$ so that
  $\{\tau(l_i)\}_{i=1}^n$ are of the form specified above and $\w(l_i)=0$.
\end{definition}

We are now ready to prove Corollary \ref{c:ADELag}, which we recall below.

\bcor\label{c:existenceADE} In rational or ruled 4-manifolds, any homological Lagrangian
ADE-configuration $\{l_i\}_{i=1}^n$ admits a Lagrangian
ADE-configuration representative. In the case of
$A_n$-configurations, one may require the configuration lie in
$M\backslash D$, where $D$ is a symplectic divisor disjoint from a
set of embedded symplectic representatives of $\{E_i\}_{i=1}^{n+1}$.

\ecor

\bpf Notice first that any reflection along a $-2$-sphere  is the homological action of
a diffeomorphism, therefore we may assume the homological
configuration is precisely of the form specified in \cite{OO05}.
Choose an arbitrary symplectic form $\w'$ so that our designated
classes admit a symplectic ADE-configuration representative (see \cite{OO05} for an example).  Then one may
obtain a Lagrangian ADE-configuration by a conifold transition, by
changing the symplectic form to some $\w''$.  Note that $\w$ and
$\w''$ are symplectic deformation equivalent (as is the case for any
symplectic form in rational manifolds with the same canonical
class), our existence result is an immediate consequence of Theorem
\ref{t:stabilityADE}.

For the $A_n$ case, we refine our embedding of the Lagrangian
configuration as follows.  Blow down all $E_i$, $i\le n+1$ and
shrink the resulting balls to a very small equal size, then isotope
them into a Darboux neighborhood.  Upon blowing back up these small
balls one obtains a symplectic form on $M$ with an open set
symplectomorphic to $B^4\#(n+1)\overline{\CP^2}$, where all
exceptional spheres have the same symplectic area.  This open
set contains a Lagrangian $A_n$-configuration, see for example, the construction
 in Section 2 of \cite{W13}.  While the isotopy above can be
chosen disjoint from $D$, the deformation is supported disjoint from
$D$, as well.  Therefore, one may apply the stability result in Theorem
\ref{t:stabilityADE} above.

\epf

\bpf[Proof of Corollary \ref{c:An-b1} and \ref{c:An}]  Note that
we may reduce Corollary \ref{c:An-b1} to the case of \ref{c:An}, that is,
 when the packing is supported away from the isotropic skeleton.
 To see this, rescale the symplectic form on $M$ so that $\w(E_i)$ are rational numbers.
 Then choose a deformation so that the symplectic form of the minimal model of $M$
 has rational period, this can be done due to the openness of the non-degeneracy condition.
 One then shrinks all embedded balls corresponding to
 each exceptional sphere (including those not listed as $E_i$, but consisting basis elements in $H_2(M)$) to a very small
 volume and then move them away from an isotropic skeleton of the minimal model
 of $M$.  The blow-up along such small balls thus gives a form $\w'$ which is deformation equivalent to
 the original symplectic form.  If one has an $A_n$-Lagrangian configuration
 for $\w'$, then the stability for manifolds with $b^+=1$ in
 Theorem \ref{t:stabilityADE} concludes Corollary \ref{c:An-b1}.


Therefore, it suffices to find a Lagrangian $A_n$-configuration in
the complement of the isotropic skeleton when the minimal model of
$M$ has rational period (case of Corollary \ref{c:An}).  Biran
\cite[Theorem 1.A]{Bi01} showed that this complement is
symplectomorphic to a standard symplectic disk bundle $E$ modelled
on the normal bundle of a Donaldson hypersurface.  One may then
compactify this disk bundle $E$ into a symplectic ruled surface $E'$
by slightly deforming the symplectic form and adding a symplectic
divisor at infinity (equivalently, do a symplectic cut near the
boundary).  Upon blowing up, one may apply the existence result
Corollary \ref{c:ADELag} with $D$ as the added infinity divisor. The
corollary is thus concluded by embedding the complement of $D$ back
into the complement of the isotropic skeleton.

\epf

\section{Spheres in Rational Manifolds}\label{s-4}

In this section, we prove the classification result Theorem \ref{t:spheres}, using the following strategy:
\begin{itemize}
\item we first provide a classification of homology classes that satisfy the imposed constraints, see Section \ref{s:Hom-4};
\item secondly, we show that all classes obtained in this way are symplectically representable by a connected $\omega$-symplectic
sphere for some symplectic structure $\omega$, using the so-called \textit{tilted transport} of Section \ref{s:Con-4};
\item finally, we apply the results in Section \ref{nodal} to extend the result
to all classes satisfying conditions (1)-(3) in Spec. \ref{spec:isphere}.
\end{itemize}

In Section \ref{s:-1-2-3} we also include a complete account for symplectic
$-1,-2,-3$-spheres for completeness; these results mostly follow
from earlier work, see \cite{L1,TJLL,LW}.

\subsection{Homology classes of smooth $-4$ spheres}\label{s:Hom-4}

Consider a class $A\in H_2(M,\mathbb Z)$ for $M=\mathbb CP^2\#k\overline{\mathbb CP^2}$.  In the standard basis we write $A=aH-\sum_{i=1}^k b_iE_i$.  Such a class is called {\it reduced} if\begin{itemize}
\item $b_1\ge b_2\ge ...\ge b_k\ge 0$ and
\item $a\ge b_1+b_2+b_3$.
\end{itemize}

The following lemma gives a complete list of non-reduced classes for smooth $-4$ spheres.

\begin{lemma}\label{l:hom}
Let $M=\mathbb CP^2\#k\overline{\mathbb CP^2}$ with $k\ge 1$ and $A\in H_2(M,\mathbb Z)$.  Assume that $A\cdot A=-4$.  Then, up to $D(M)$-equivalence, $A$ is a reduced class (when $k\ge 3$),  or  one of the following: $$ -H+2E_1-E_2, \quad 2E_1, \quad 2(H-E_1-E_2),  \quad H-E_1-..-E_5.$$

\end{lemma}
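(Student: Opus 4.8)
The plan is to treat this as a purely lattice-theoretic normal-form problem for the odd unimodular form $\langle 1\rangle\oplus k\langle -1\rangle$ on $H_2(M,\mathbb{Z})$, and to reduce an arbitrary $A$ with $A\cdot A=-4$ using the generators of $D(M)$ supplied by Lemma \ref{trans}: the sign reversals $E_i\mapsto -E_i$ (reflections in the $(-1)$-classes), the Cremona transformations $R_{H-E_i-E_j-E_k}$ (reflections in the $(-2)$-classes), and the permutations of the $E_i$ (realized by diffeomorphisms, and also reflections in the $(-2)$-classes $E_i-E_j$). Writing $A=aH-\sum_i b_iE_i$, I would first apply sign reversals to arrange $b_i\ge 0$ and a permutation to order $b_1\ge b_2\ge\cdots\ge b_k\ge 0$; throughout, the quantity $a^2-\sum_i b_i^2=A\cdot A=-4$ is preserved.

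The engine of the reduction is the Cremona move on the three largest coefficients, which sends $a\mapsto a'=2a-b_1-b_2-b_3$. This strictly decreases the nonnegative integer $a$ whenever $a<b_1+b_2+b_3$ (so the class is not reduced) while $2a\ge b_1+b_2+b_3$; after re-sorting and re-normalizing signs the process must terminate. I would then read off the terminal configurations. A class is reduced once $a\ge b_1+b_2+b_3$ (a notion requiring $k\ge 3$). The only way to remain stuck with $a\ge 0$ and not reduced is the strict obstruction $b_1+b_2+b_3>2a$, and the Cauchy--Schwarz estimate $b_1+b_2+b_3\le\sqrt{3\sum_i b_i^2}=\sqrt{3(a^2+4)}$ forces $a\le 3$ there; a direct check eliminates $a=2,3$, leaving $a\in\{0,1\}$. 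Solving $a^2-\sum_i b_i^2=-4$ in these cases yields the finite list $2E_1$ and $E_1+E_2+E_3+E_4$ (for $a=0$), and $H-2E_1-E_2$ and $H-E_1-\cdots-E_5$ (for $a=1$). Separately, when $k$ is too small to supply three indices one cannot apply the move even at the boundary $b_1+b_2+b_3=2a$; this produces the extra representative $2(H-E_1-E_2)$ at $k=2$, which is carried to $2E_1$ once $k\ge 3$.

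Finally I would collapse this finite list onto the four stated representatives, allowing a single ``overshoot'' Cremona that drives $a$ negative. Explicit short chains give $H-2E_1-E_2\sim -H+2E_1-E_2$ and $E_1+E_2+E_3+E_4\sim -H+2E_1-E_2$ (e.g. for the latter, Cremona on $\{1,2,3\}$ followed by Cremona on $\{1,2,4\}$ lands on $-H+2E_1-E_2$ after relabelling), so every primitive terminal configuration except $H-E_1-\cdots-E_5$ reduces to $-H+2E_1-E_2$. The non-primitive classes $2E_1$ and $2(H-E_1-E_2)$ remain separate, since divisibility by $2$ is an invariant of $D(M)\subset\mathrm{GL}(H_2(M,\mathbb{Z}))$, and they coincide only for $k\ge 3$. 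To see that $-H+2E_1-E_2$ and $H-E_1-\cdots-E_5$ are genuinely distinct, rather than redundant, at the smallest relevant $k$, I would invoke the characteristic invariant: $H-E_1-\cdots-E_5$ has all coordinates odd, hence is characteristic precisely when $k=5$, whereas $-H+2E_1-E_2$ is never characteristic (it has an even coordinate), and being characteristic is preserved by $D(M)$; for $k>5$ the class $H-E_1-\cdots-E_5$ ceases to be characteristic and merges with $-H+2E_1-E_2$, matching the coincidence pattern recorded after Theorem \ref{t:spheres}.

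I expect the main obstacle to lie in the bookkeeping at the boundary of the reduction rather than in any single hard idea. One must handle the regimes $a<0$ and small $k$ (where no triple of indices is available, so a Cremona cannot be performed) uniformly with the generic reduction, verify that the enumeration of obstructed configurations for the finitely many small values of $a$ is genuinely exhaustive, and check that each collapsing chain is valid in the stated $k$-range. Establishing completeness of the four-class list together with the correct small-$k$ existence-and-coincidence pattern is the delicate step; the termination and the generic Cremona reduction are routine once the monovariant $a$ is in place.
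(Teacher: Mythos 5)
Your main line for $k\ge 3$ is essentially the paper's own argument: normalize by sign changes and permutations, drive $a$ down with the Cremona reflection $a\mapsto 2a-b_1-b_2-b_3$, and enumerate the stuck configurations. The paper delegates the reduction to Lemma 3.4 of \cite{LL2} and works with the weaker terminal condition $b_1^2+b_2^2+b_3^2-4\le a^2\le\tfrac{3}{4}(b_1^2+b_2^2+b_3^2)$, which is why its list of terminal tuples is longer than yours (it also records $(2,2,1,1,1,1)$, $(2,2,2)$, $(3,2,2,2,1)$, $(3,3,2)$, which sit on or past the boundary $b_1+b_2+b_3=2a$ and collapse onto your four after one more move); your sharper criterion $b_1+b_2+b_3>2a$ together with Cauchy--Schwarz correctly kills $a=2,3$, and I have checked your collapsing chains $H-2E_1-E_2\sim -H+2E_1-E_2$ and $E_1+\cdots+E_4\sim(3,2,2,2,1)\sim -H+2E_1-E_2$, as well as the divisibility and characteristic-vector invariants and the $k=5$ versus $k\ge6$ coincidence pattern. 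All of this agrees with the paper.

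The gap is at $k=2$. The generators you take from Lemma \ref{trans} --- sign reversals $E_i\mapsto -E_i$, permutations, $a\mapsto -a$, and the reflections $R_{H-E_i-E_j-E_k}$ --- include no move that changes $|a|$ when only two exceptional classes are present, yet there are infinitely many square $-4$ classes there that are not on your list: for every $b\ge 3$ the class $bH-bE_1-2E_2$ has square $-4$, and $|a|=b$ is an invariant of the subgroup you can actually apply. So "the only extra feature at $k=2$ is the representative $2(H-E_1-E_2)$" does not prove the $k=2$ case. What is missing is the reflection along the exceptional class $H-E_1-E_2$ (a smooth $(-1)$-sphere, so its reflection lies in $D(M)$), which sends $a\mapsto 3a-2b_1-2b_2$ and strictly decreases $|a|$ whenever $a<b_1+b_2<2a$; iterating reduces every class to one with $2a\le b_1+b_2$, which is exactly the normalization of Lemma 1 of \cite{LL4} that the paper cites, and the finite check then yields precisely $2E_1$, $2(H-E_1-E_2)$ and $-H+2E_1-E_2$. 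With that one extra generator your argument closes; without it the $k=2$ case remains open.
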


\begin{proof}
For $k=1$, $(aH-bE_1)^2=a^2-b^2=-4$ implies that only $\pm 2E_1$ is possible.

For $k=2$, Lemma 1, \cite{LL4} reduces the problem to classes with $2a\le b_1+b_2$.
Thus $3b_1^2-2b_1b_2+3b_2^2\le 16$ and the only possible classes are $D(M)$-equivalent
to $2E_1$, $2(H-E_1-E_2)$ and $-H+2E_1-E_2$.

For $k\ge 3$, as in Lemma 3.4, \cite{LL2}, it can be shown using reflections along $-2$-spheres $H-E_i-E_j-E_k$ that either $A$ is
$D(M)$-equivalent to a reduced class or to one that satisfies
\[
b_1^2+b_2^2+b_3^2-4\le a^2\le \frac{3}{4}(b_1^2+b_2^2+b_3^2).
\]
In addition to this inequality, $a^2-\sum b_i^2=-4$ and $b_1\ge b_2\ge ...\ge b_k\ge 0$.  The solutions to this system, written in short as $(a,b_1,b_2,..,)$, are:
\[
(0,1,1,1,1,0..), (1,1,1,1,1,1,0,..), (0,2,0,..), (1,2,1,0,..),
\]
\[
(2,2,1,1,1,1,0,..), (2,2,2,0,..), (3,2,2,2,1,0,..), (3,3,2,0,..).
\]
Under the $D(M)$-action, $(0,2,0,..)\leftrightarrow (2,2,2,0,..)$, $(1,1,1,1,1,1,0,..)$ is in a class of its own when $k=5$. The other classes are all equivalent, and when $k\ge 6$, $(1,1,1,1,1,1,0,..)$ is included as well.

\end{proof}

\subsubsection{Symplectic Genus}

In order to address the reduced classes in Lemma \ref{l:hom}, we first
briefly describe a general obstruction to the existence of smooth /symplectic surfaces in a symplectic manifold.

Clearly, for a class $A\in H_2(M,\mathbb Z)$ of a symplectic manifold $(M,\omega)$ to be represented by a symplectic surface, there must exist $\alpha\in\mathcal C_M$ with $\alpha\cdot A>0$.  Let $\mathcal K$ denote the set of symplectic canonical classes.  Consider the following set:
\[
\mathcal K_A=\{K\in\mathcal K\;|\;\exists \alpha\in\mathcal C_M: K_\alpha=K, \alpha\cdot A>0\}.
\]
To each $K\in \mathcal K_A$, define $\eta_K(A)=\frac{1}{2}(K\cdot A+A\cdot A)+1$.  Finally, define the {\it symplectic genus} to be
\[
\eta(A)=\max_{K\in\mathcal  K_A}\eta_K(A).
\]
Note that there is no guarantee that $\eta(A)\ge 0$.  If $K\in\mathcal  K_A$ is
some symplectic canonical class such that $\eta(A)=\eta_K(A)$, we obtain the inequality
\[
\tilde K\cdot A\le K\cdot A
\]
for any $\tilde K\in\mathcal  K_A$.  Moreover, Lemma 3.2, \cite{LL2} shows that $\eta(A)$ has the following properties:\begin{enumerate}
\item The symplectic genus $\eta(A)$ is no larger than the minimal genus of $A$.  Moreover, if $A$ is represented by a connected symplectic surface, then the minimal genus and the symplectic genus coincide.
\item The symplectic genus is invariant under the action of Diff(M).
\end{enumerate}
Notice that the first condition ensures that the symplectic genus is well-defined as well as providing an obstruction to the existence of a smooth / symplectic curve.

For reduced classes $A$ in non-minimal rational or ruled manifolds, Lemma 3.4, \cite{LL2}, proves that $K_{st}\in \mathcal K_A$.  Thus we obtain the following:

\begin{lemma}\label{l:Kbound}
Let $M$ be a non-minimal rational or ruled manifold and $A\in H_2(M,\mathbb Z)$. Assume that $A$ is reduced and $A$ can be represented by a smooth sphere. Then  $K_{st}\cdot A\le -2-A^2$.
\end{lemma}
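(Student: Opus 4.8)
The plan is to read the bound straight off the theory of symplectic genus developed in the preceding paragraphs, since the hypotheses supply exactly the two ingredients that combine to give it. The underlying mechanism is the upper bound of the symplectic genus by the minimal genus (property (1)) together with the fact, just cited, that $K_{st}$ is an admissible canonical class for a reduced $A$.

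First I would invoke the fact recorded in the paragraph before the statement (Lemma 3.4, \cite{LL2}) that for a reduced class $A$ in a non-minimal rational or ruled manifold one has $K_{st}\in\mathcal K_A$. Because $\eta(A)$ is defined as a maximum of $\eta_K(A)$ over all $K\in\mathcal K_A$, this membership immediately yields the lower bound
\[
\eta(A)\;\ge\;\eta_{K_{st}}(A)\;=\;\tfrac{1}{2}\bigl(K_{st}\cdot A+A\cdot A\bigr)+1.
\]

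Next I would use the hypothesis that $A$ is represented by a smooth sphere. A genus-zero representative exists, and the minimal genus is non-negative, so the minimal genus of $A$ equals $0$. Property (1) of the symplectic genus then gives $\eta(A)\le 0$. Chaining this with the previous display produces $\tfrac{1}{2}(K_{st}\cdot A+A^2)+1\le 0$, which rearranges to $K_{st}\cdot A\le -2-A^2$, as claimed.

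I do not expect a genuine obstacle here: essentially all of the content has been front-loaded into the definition of $\eta$, into the cited membership $K_{st}\in\mathcal K_A$ for reduced classes, and into the comparison of symplectic genus with minimal genus. The only points meriting a sentence of care are confirming that ``represented by a smooth sphere'' forces the minimal genus to be exactly $0$ (which follows from non-negativity of the genus, so that the $\le 0$ direction suffices), and making explicit that reducedness is precisely the hypothesis under which $K_{st}\in\mathcal K_A$ is available, so the argument does not apply verbatim to non-reduced classes.
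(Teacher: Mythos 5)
Your argument is correct and is precisely the one the paper intends: the lemma is stated as an immediate consequence of $K_{st}\in\mathcal K_A$ for reduced classes (Lemma 3.4 of \cite{LL2}) combined with the bound of the symplectic genus by the minimal genus, which is $0$ for a class represented by a smooth sphere. The chain $\tfrac{1}{2}(K_{st}\cdot A+A^2)+1=\eta_{K_{st}}(A)\le\eta(A)\le 0$ is exactly what the paper's ``Thus we obtain the following'' is pointing to.
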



\noindent\textbf{Example.} The class $e=(11,6,6,6,1,..,1)\in H_2(\mathbb CP^2\#18\overline{\mathbb CP^2},\mathbb Z)$ satisfies $e\cdot e=-4$ and the
adjunction equality for an embedded sphere for some $K\in\mathcal K$.  However, it is $D(M)$-equivalent to the reduced class $e_r=(4,1,..,1)$, which has symplectic genus $1$, hence
cannot be represented by a smooth embedded sphere.  It then follows from stability that the same must hold for $e$.  Note also that $e$ is not
\textit{Cremona equivalent} (reflections with respect to only (-2)-spherical classes) to a reduced class, this consideration distinguishes
this from the approach of \cite{LW}.

\subsubsection{Reduced $-4$ classes} \label{s:-4red}

We will now begin a study of the possible reduced classes.  Let $M=\mathbb CP^2\#k\overline{\mathbb CP^2}$ with $k\ge 1$ and $A\in H_2(M,\mathbb Z)$. Assume that $A$ is reduced,  $A\cdot A=-4$ and $A$ can be represented by a smooth sphere. Thus Lemma \ref{l:Kbound}  implies that
 $K_{st}\cdot A\le 2$.  Concretely, for some $d\in\mathbb Z$, $d\le 2$ and $\tau\ge 0$, the array of coefficients $(a,b_1,b_2,...,b_k)\in \mathbb \Z^k$ of such a class $A$ solves:

\begin{align}
\label{e:3a}&3a=\sum_{i=1}^k b_i-d\\
\label{e:a2} &a^2=\sum_{i=1}^k b_i^2-4\\
\label{e:aineq}&a\ge b_1+b_2+b_3\\
\label{e:bconstraints}&b_i\ge b_{i+1}\ge \tau,\hskip2mm i=1,\dots,k-1.
\end{align}

Notice that for a class to have negative self-intersection and be reduced, we must have $k\ge 10$.
The role of $\tau$ will become transparent in the proof, we will only consider cases with
$\tau\in\{1,2,3\}$ (the resulting equations are in fact not exclusive).
In summary, the standing assumptions for the set of equations \eqref{e:3a}-\eqref{e:bconstraints} are:

\begin{equation}\label{e:standingAss}d\le2, d\in\Z,\hskip 2mm k\ge10,\mbox{  and  } \tau\in\{1,2,3\}.\end{equation}

The goal is now to show that $d=K_{st}\cdot A=2$ is the only
possibility, even when we relax the condition to allow
$(a,b_1,b_2,...,b_k)\in \mathbb \R^{k+1}$.  For this we need to
describe a rearrangment operation which will allow us to rule out these cases.

\begin{lemma}\label{l:reduce}
Assume a solution $(a,b_1,\dots,b_{k})\in \mathbb \R^{k+1}$ to \eqref{e:3a}-\eqref{e:bconstraints} exists when either
\begin{itemize}
\item $\tau=1$ and $k\ge 11$ or
\item $\tau\in\{2,3\}$ and $k=10$ or
\item $\tau=1$, $d<2$ and $k=10$.
\end{itemize}
  Then there exists a solution $(a,b_1',...,b_{k}')\in \mathbb \R^{k+1}$ to \eqref{e:3a}-\eqref{e:bconstraints} which further satisfies:

\begin{align}
 \label{e:aeq}a&= b'_1+b'_2+b'_3 \\
 b'_2&=\dots=b'_{4+r-1}=B \\
 b'_{4+r+1}&=\dots=b'_{k}=\tau \\
 \label{e:b'}\tau&\le b'_{4+r}=b'\leq B
\end{align}
where $0\le r\le k-4$.
\end{lemma}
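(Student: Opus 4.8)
The plan is to regard the system \eqref{e:3a}--\eqref{e:bconstraints} as a constrained moment problem in which $a$ (and hence $d$) is held fixed. Once $a$ and $d$ are fixed, equations \eqref{e:3a} and \eqref{e:a2} say precisely that $\sum_i b_i = 3a+d$ and $\sum_i b_i^2 = a^2+4$ are prescribed. Thus the set $\mathcal S$ of vectors $(b_1,\dots,b_k)\in\R^k$ satisfying these two moment identities together with the ordering and floor conditions \eqref{e:bconstraints} and the inequality \eqref{e:aineq} is a compact subset of $\R^k$ (the intersection of a sphere, a hyperplane, and finitely many half-spaces), and it is nonempty by hypothesis. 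The lemma asks us to exhibit a point of $\mathcal S$ of the rigid ``three-level'' shape \eqref{e:aeq}--\eqref{e:b'}: a top entry $b_1'$, a block of equal entries $B$, a single transitional entry $b'\in[\tau,B]$, and a block of entries equal to the floor $\tau$.

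The mechanism I would use is a \emph{moment-preserving rearrangement}. Given three coordinates $b_i\ge b_j\ge b_l$ that are not yet pinned at the extremes, the triples with the same value of $b_i+b_j+b_l$ and of $b_i^2+b_j^2+b_l^2$ form a circle, and flowing along it in the appropriate direction preserves both global moments while ``polarizing'' the triple — pushing two values together (a merge) or one value down to the floor $\tau$ — until some inequality in \eqref{e:bconstraints} becomes active. Each such move keeps membership in $\mathcal S$ and strictly decreases the number of distinct values occurring strictly between $\tau$ and the current top block. Iterating, the tail $b_2,\dots,b_k$ collapses to at most three levels: the common top value $B$, the floor value $\tau$, and at most one leftover value $b'$ with $\tau\le b'\le B$. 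This produces exactly the block pattern of \eqref{e:aeq}--\eqref{e:b'}, with the integer $r$ recording the size of the $B$-block; the count $0\le r\le k-4$ then follows by bookkeeping the $k$ entries.

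It remains to arrange the equality \eqref{e:aeq}, i.e.\ to drive \eqref{e:aineq} to $a=b_1'+b_2'+b_3'$. The idea is that polarizing the tail releases a definite amount of second-moment budget, which can be fed into the top three coordinates: one runs the moves so that $b_1+b_2+b_3$ increases monotonically while both moments stay fixed, stopping as soon as the value $a$ is reached. Equivalently, dropping \eqref{e:aineq} for a moment, the continuous function $b_1+b_2+b_3$ takes a value $\le a$ at the given solution and a value $\ge a$ at the maximally top-heavy configuration $b_1=b_2=b_3$; the intermediate value theorem on the (connected, in the relevant range) moment set then produces a point with $b_1+b_2+b_3=a$, after which the remaining coordinates are polarized as above. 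Once \eqref{e:aeq} holds one has $b_2'=b_3'=B$, so $b_1'=a-2B$, and the constraint $b_1'\ge B$ reduces to $B\le a/3$.

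The step I expect to be the genuine obstacle is this last one: guaranteeing that \eqref{e:aineq} can actually be saturated while simultaneously keeping $b_k\ge\tau$ and ending with only a \emph{single} intermediate value $b'$. Both features can fail if the tail is too short or the floor too high, and this is exactly where the trichotomy of hypotheses ($\tau=1$ with $k\ge 11$; $\tau\in\{2,3\}$ with $k=10$; and $\tau=1,\ d<2$ with $k=10$) must be invoked: they ensure the $\tau$-block and the $B$-block are long enough to supply the required variance transfer and that the resulting transitional value $b'$ lands in the admissible window $[\tau,B]$ with $0\le r\le k-4$. I would verify the endpoint inequalities $B\le a/3$ and $\tau\le b'\le B$, and the admissible range of $r$, directly from the two-level moment equations $b_1'+2B=a$, $b_1'+(r+2)B+b'+(k-r-4)\tau=3a+d$, and $(b_1')^2+(r+2)B^2+(b')^2+(k-r-4)\tau^2=a^2+4$, which is a short computation once the structural reduction above is in place.
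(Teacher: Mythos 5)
Your overall strategy --- rearrange the $b_i$ into a canonical block shape and finish with an intermediate value argument --- is the right one, but there is a genuine gap, and it sits exactly where you say you ``expect the genuine obstacle'' to be. The entire content of the lemma is the quantitative step you defer: showing that the rearrangement can be made to terminate with \eqref{e:aineq} saturated, rather than with the tail bottoming out at $\tau$ while $a>b'_1+b'_2+b'_3$ still holds, and showing that the second-moment equation \eqref{e:a2} can then actually be met in the structured form. Both points are precisely where the three case hypotheses enter, through explicit inequalities: in the paper, the ``tail bottoms out first'' alternative leads to \eqref{case2}, which is checked to have no admissible solution in each hypothesis regime, and the solvability of \eqref{e:a2} in block form is established by exhibiting the concrete comparison point $\mathbf{s}''=(a'',b_1'',\tau,\dots,\tau)$ with $b_1''=\tfrac{k-7}{2}\tau-\tfrac{d}{2}$, $a''=b_1''+2\tau$, and verifying $F(\mathbf{s}'')\ge 0$ and $b_1''\ge\tau$ there. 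A proof omitting these computations has not used the hypotheses at all, and they are essential: the paper's subsequent remark notes that for the excluded case $\tau=1$, $d=2$, $k=10$ the procedure does not terminate as described.

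Your rearrangement mechanism is also harder to control than you allow, and differs from the paper's in a way that matters. You insist on preserving both moments at every step (three-coordinate ``circle'' moves) and on keeping $a$ fixed. The paper instead uses two-coordinate transfers $(b_i,b_j)\mapsto(b_i+c,b_j-c)$ that preserve only \eqref{e:3a} and strictly increase $\sum b_i^2$, so that \eqref{e:a2} is deliberately relaxed to the inequality \eqref{e:E2'I} during the rearrangement; the equality is restored only at the end by taking the segment in $\R^{k+1}$ between the rearranged point $\mathbf{s}'$ (where $F\le 0$) and $\mathbf{s}''$ (where $F\ge 0$) --- note that $a$ varies along this segment --- and using convexity of all the side constraints to keep it admissible. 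That relaxation is what makes polarizing the tail easy. With both moments pinned, your claim that one can monotonically increase $b_1+b_2+b_3$ up to $a$ while polarizing the tail and respecting the ordering and floor constraints is not justified, and your IVT endpoint (that some ``maximally top-heavy'' configuration has $b_1+b_2+b_3\ge a$) is exactly the unproven quantitative assertion again. I would adopt the relax-then-restore scheme, or else supply the optimization argument that the maximum of $b_1+b_2+b_3$ over your fixed-moment set reaches $a$ under each of the three hypotheses.
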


\bpf
We first describe a rearrangement operation on a solution to \eqref{e:3a}-\eqref{e:bconstraints} which changes the $b_i$ while leaving $a$ unchanged and preserving all but \eqref{e:a2}.  Suppose ${\bf s}=(a,b_1,\dots,b_{k})$ is a solution to \eqref{e:3a}-\eqref{e:bconstraints}.  Assume $b_i>b_j>0$.  Then for $c\in\mathbb R^+$, replace $(b_i, b_j)$ by $(b_i+c,b_j-c)$.  This operation clearly leaves \eqref{e:3a} unchanged and, by properly choosing $c$, preserves \eqref{e:aineq} and \eqref{e:bconstraints}.  We will always assume that $c$ has been chosen in this manner. After such an operation, $b_i^2+b_j^2$ will increase at least by $2c^2$.

Now apply this operation repeatedly choosing  $b_i\in\{b_1, b_2, b_3\}$  and $b_j\in\{b_4,..,b_{k}\}$.  One arrives at one of the following scenarios:\begin{itemize}
\item $a=\tilde b_1+\tilde b_2+\tilde b_3$ or
\item $a>\tilde b_1+\tilde b_2+\tilde b_3$ and $\tilde b_4=\dots=\tilde b_{k}=\tau$.
\end{itemize}

In the first case, one then further rearranges $\tilde b_1$ with $\tilde b_2$ until $\tilde b_2=\tilde b_3$.  Then rearrange $\tilde b_4$ with the rest until $\tilde b_4=\tilde b_3$ or $\tilde b_5=\cdots=\tilde b_{k}=\tau$.
If $\tilde b_4=\tilde b_3$, do further rearrangements so that $\tilde b_5=\tilde b_4$, etc.  In the second case, rearrange $\tilde b_2$ and $\tilde b_3$ with $\tilde b_1$ until $\tilde b_2=\cdots=\tilde b_{k}=\tau$.

The end result is a new sequence

$${\bf s'}=(a,b'_1,\dots,b'_{k})$$

that satisfies \eqref{e:3a}, \eqref{e:aeq}-\eqref{e:b'} for some $0\le r\le k-4$ as well as one of the following:
\begin{enumerate}
\item $a= b'_1+ b'_2+ b'_3$ or
\item $a> b'_1+2\tau$ and $ b'_2=\dots= b'_{k}=\tau$.
\end{enumerate}
Notice that ${\bf s'}$ will not necessarily satisfy \eqref{e:a2}, instead one has

\beq\label{e:E2'I} a^2\le \sum b'^2_i-4 \eeq

{\bf Case 1:} Consider $a=b'_1+ b'_2+ b'_3$.  The function $F({\bf s})=a^2-b_1^2-\dots-b_{k}^2+4$ satisfies $F({\bf s'}) \leq 0$.

Let $b_1''=\frac{k-7}{2}\tau-\frac{d}{2}$, $a''=b_1''+2\tau$ and  ${\bf s''}=(a'', b_1'', \tau, .., \tau)$.   Then in all cases to be considered  we have
\[
b_1''\ge \tau\mbox{  and  }F({\bf s''})\ge 0.
\]
Thus ${\bf s''}$ satisfies \eqref{e:3a}, \eqref{e:aeq}-\eqref{e:b'} just as ${\bf s'}$ does.

Therefore, the line segment between ${\bf s''}$ and ${\bf s'}$ in $\R^{k+1}$
must contain a solution to $F({\bf s})=0$. Moreover, such a solution must satisfy \eqref{e:3a} and \eqref{e:aeq}-\eqref{e:b'}
since all these conditions are convex and the endpoints of the chosen segment satisfy all these restrictions.

{\bf Case 2:} Consider the situation that we obtain a solution with $a> b'_1+2\tau$ and $ b'_2=\dots= b'_{k}=\tau$.  By solving  $3a=b'_1+(k-1)\tau-d$ for $(k-1)\tau$ and substituting this into \eqref{e:E2'I} and making use of $a>b'_1+2\tau$, we obtain
\begin{equation}\label{case2}
\left(b'_1+\frac{\tau}{2}\right)^2<\left(b'_1-\frac{\tau}{2}\right)^2+2\tau^2+d\tau-4
\end{equation}

When $\tau=1$ (independent of $k$ in fact), \eqref{case2} admits no solution $b'_1\ge 1$ when $d\le 2$.

Assume now that $\ \tau\in\{2,3\}$ and $  k=10$.  Then  \eqref{case2} simplifies to
\[
b_1\tau<\tau^2+\frac{d}{2}\tau-2.
\]
Assume that $\tau=2$.  Then $2\le b_1<1+\frac{d}{2}\le 2$, which is a contradiction.\\
Assume that $\tau=3$.  Then $3\le b_1<3+\frac{d}{2}-\frac{2}{3}$ which has no solution when $d\le 1$.
For $d=2$, consider again \eqref{e:3a}.  Solving for $b_1$ under the assumption $a> b'_1+2\tau$, one obtains $3\le b_1<\frac{5}{2}$, also
a contradiction.  Hence Case 2 never shows up and the proof is completed.

\epf

\begin{remark} Consider $d\le 1$ and replace $4$ by $3$ in \eqref{e:a2} to
consider classes with $A\cdot A=-3$.  Then  Lemma \ref{l:reduce}
continues to hold for $\tau=1$ and $k\ge 10$.  This can easily be
seen in Case 2, where in \eqref{case2} the final term changes to 3.
Moreover, in Case 1 the same point ${\bf s''}$ can be used.

It should be noted that in Case 1, the case $\tau=1$, $d=2$ and
$k=10$ does not work. It can be shown that in this setting the
procedure will not terminate with a solution as described.  The
reason for this becomes clear when one considers Lemmata \ref{l:b=1}
and \ref{l:b=2}.
\end{remark}

Making use of this process, we now begin to rule out certain reduced classes.

\begin{prop}\label{t:-4smooth}Let $M=\mathbb CP^2\#k\overline{\mathbb CP^2}$.  Then there exists no reduced class $A\in H_2(M,\mathbb Z)$ with $A\cdot A=-4$ and that $\min\{b_i\}\ge1$ in the following cases:
\begin{enumerate}
\item $k\ge 11$ and $K_{st}\cdot A\le 2$;
\item $k=10$ and $K_{st}\cdot A< 2$;
\item $k=10$, $K_{st}\cdot A = 2$ and $\min\{b_i\}\ge 3$.
\end{enumerate}

\end{prop}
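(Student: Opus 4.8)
The plan is to argue by contradiction in each of the three cases, using Lemma \ref{l:reduce} to pass to its normal form and then showing that \eqref{e:3a} and \eqref{e:a2} cannot hold simultaneously once the ordering constraints are imposed. Concretely, I would set $\tau=1$ in Cases (1)--(2) and $\tau=3$ in Case (3), so that the hypothesis on $\min\{b_i\}$ becomes $\min\{b_i\}\ge\tau$. Suppose a reduced class $A=aH-\sum_i b_iE_i$ with $A\cdot A=-4$ and $d:=K_{st}\cdot A$ as prescribed existed; then $(a,b_1,\dots,b_k)$ solves \eqref{e:3a}--\eqref{e:bconstraints}. The three cases are chosen to match precisely the three bullets in the hypothesis of Lemma \ref{l:reduce}, so that lemma produces a real solution $(a,b_1',\dots,b_k')$ of the special form \eqref{e:aeq}--\eqref{e:b'}: $r+2$ of the entries $b_2',\dots,b_k'$ equal a common value $B$, $k-4-r$ of them equal $\tau$, and a single intermediate entry equals $b'\in[\tau,B]$, with $a=b_1'+2B$ and $b_1'\ge B\ge b'\ge\tau$.

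I would then fix the integer $r$ and use \eqref{e:3a} (together with $a=b_1'+2B$) to solve for $b_1'$ as an affine function of $(B,b')$, substituting the result into the defect
\[
F:=a^2-\sum_i b_i^2+4,
\]
so that \eqref{e:a2} is exactly the equation $F=0$. With $b_1'$ eliminated, $F$ becomes a quadratic in $(B,b')$ whose Hessian is negative definite for $r\le 4$, while the constraints $\tau\le b'\le B$ together with $b_1'\ge B$ cut out a convex polygon; a concave function attains its minimum over such a region at a vertex. The polygon is a triangle, its vertices being the three points where two of the active constraints $b'=\tau$, $b'=B$, $b_1'=B$ meet, and the corresponding values of $F$ are, writing $N=(k-4-r)\tau-d$ and $P=(k-3-r)\tau-d$,
\[
(k-9)\tau^2-2d\tau+4,\qquad \frac{P^2}{6-r}-(k-3-r)\tau^2+4,\qquad \frac{N^2}{5-r}-(k-4-r)\tau^2+4.
\]
The aim is to verify that all three are strictly positive in each regime, whence $F>0$ on the whole polygon, contradicting $F=0$. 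The degenerate values $r=5,6$ (and $r>6$, which occur only when $k\ge 11$), where $F$ ceases to be strictly concave, I would handle separately: there $F$ is monotone in $B$ along the feasible segment and its infimum again collapses to $(k-9)\tau^2-2d\tau+4$, which is positive in all three regimes.

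The hardest step is Case (3). There $(k-9)\tau^2-2d\tau+4$ equals $1$ and the remaining vertices give values like $\tfrac{7}{6}$ and $\tfrac{6}{5}$, so the positivity margins are genuinely thin and one must run through every $r\in\{0,\dots,6\}$ rather than argue crudely. This is exactly the point at which the hypothesis $\min\{b_i\}\ge 3$ is indispensable: in the degenerate case $r=6$ one finds $F=(b'-2)^2$ along the critical segment $B=b'$, which vanishes precisely at $b'=2$. For $\tau=1$ or $\tau=2$ this value is admissible and produces the genuine reduced classes whose smallest coefficient is $2$ (for instance $2(3H-\sum_{i=1}^{10}E_i)$, all of whose $b_i$ equal $2$, realizing $F=0$), whereas requiring $\min\{b_i\}\ge 3$ forces $b'\ge 3$ and hence $F\ge 1>0$.

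In summary, the one serious obstacle is the finite but delicate vertex computation for $k=10$, $d=2$, $\tau=3$, where the bounds are barely positive; the cases with $k\ge 11$ or $d<2$ are comfortably positive and require only the same mechanism with slack to spare. A final bookkeeping check is to confirm that none of Cases (1)--(3) inadvertently falls into the excluded regime $\tau=1$, $d=2$, $k=10$ flagged in the remark preceding the statement, so that the appeal to Lemma \ref{l:reduce} is always legitimate.
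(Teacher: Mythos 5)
Your proposal is correct and shares the paper's overall skeleton: assume a solution exists, invoke Lemma \ref{l:reduce} with $\tau=1$ in cases (1)--(2) and $\tau=3$ in case (3) (the three bullets of that lemma are calibrated exactly so that the excluded regime $\tau=1$, $d=2$, $k=10$ never arises), and then rule out the resulting normal-form system by a finite case analysis on $r$. Where you genuinely diverge is in how that final elimination is organized. The paper eliminates $b_1'$ to obtain \eqref{e:11}/\eqref{e:simp} (resp.\ \eqref{e:checkKey}--\eqref{e:checkIneq} when $\tau=3$) and then, for each $r$ separately, completes squares or substitutes $b_1=B+\alpha$ to exhibit a manifestly positive quantity. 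You instead observe that the defect $F=a^2-\sum b_i^2+4$ is, after eliminating $b_1'$, a concave quadratic in $(B,b')$ for $r\le4$, and that $\tau\le b'\le B$ together with $b_1'\ge B$ cuts out a bounded convex triangle, so positivity need only be checked at three vertices; your vertex values $(k-9)\tau^2-2d\tau+4$, $P^2/(6-r)-(k-3-r)\tau^2+4$ and $N^2/(5-r)-(k-4-r)\tau^2+4$ are correct (I verified them, along with the thin Case (3) margins $1$, $7/6$, $6/5,\dots$), and the first vertex is exactly the paper's comparison point ${\bf s''}$. This buys a uniform treatment of $r\le4$ and a sharp account of where $\min\{b_i\}\ge3$ enters: on the degenerate $r=6$ segment $B=b'$ one finds $F=(b'-2)^2$, vanishing precisely at the genuine class $6H-2\sum E_i$ of type (3) with $a=2$. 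The cost is that $r\ge5$, where concavity and boundedness fail, still requires the same ad hoc sign arguments the paper carries out; your blanket phrase that $F$ is ``monotone in $B$'' there is not literally accurate (for $r=5$, $k=11$, $d=2$ the feasible set collapses to the segment $B=b'$ on which $F\equiv2$, and for $r>6$ the quadratic is convex in $B$), but each of these finitely many subcases does check out, so this is an imprecision of description rather than a gap in the argument.
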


Before we pass to the proof, let us briefly consider the ramifications of this result.
Recalling Lemma \ref{l:hom}, this result shows that when $k\ge 11$, we have no reduced classes with $A\cdot A=-4$ which can be represented by a smooth sphere.  Moreover, according to this result, when $k=10$,  if $A$ is to be represented by a smooth sphere, then $A$ must satisfy $K_{st}\cdot A=2$ and $b_{10}\in\{1,2\}$.  The latter cases will be considered after the proof.

\begin{proof}  We will proceed to show that there exists no solution to \eqref{e:3a}, \eqref{e:a2} and \eqref{e:aeq}-\eqref{e:b'} under the conditions given in the theorem.  The three cases correspond to
\begin{enumerate}
\item $k\ge 11$, $d\le 2$ and $\tau=1$;
\item $k=10$, $d< 2$ and $\tau=1$;
\item $k=10$, $d = 2$ and $\tau= 3$.
\end{enumerate}

To simplify notation, drop all ' in \eqref{e:aeq}-\eqref{e:b'}.

Consider first the case $\tau=1$ and $k\ge 11$.  Then as $a=b_1+2B$, we obtain
\begin{equation}\label{e:2b1}
2b_1=(r-4)B+b+k-4-r-d
\end{equation}
from \eqref{e:3a} and using this in \eqref{e:a2} it follows that
\begin{equation}\label{e:11}
0=(r-6)B^2+2Bb-b^2+(k-4-r-d)(2B-1)+4-d.
\end{equation}
Notice that $2Bb-b^2$ and $4-d$ are strictly positive.

{ $\bf 6\le r\le k-6$}: In this case, $(r-6)B^2$ and $(k-4-r-d)(2B-1)$ are non-negative, hence no solution exists.  As $k\ge 11$, we need to consider $k=11$ and $r=6$ separately:  \eqref{e:11} reduces to
\[
0=2Bb-b^2-2B+(2-d)2B+3=(B-1)^2-(B-b)^2+(2-d)2B+2
\]
from which it can be seen that no solution exists if $b\ge 1$.

{$\bf r=k-5>6$}:  When $r=k-5>6$, then \eqref{e:11} reduces to
\[
0=(k-12)B^2+2Bb-b^2+(B-1)^2+(2-d)2B+2
\]
which again has no solution.

{$\bf r=k-4$:}  \eqref{e:11} can be rewritten as
\[
0=(k-11)B^2+2Bb-b^2+(B-2)^2+3
\]
which admits no solution as $B\ge b\ge 1$.

For the following cases, determine
\[
k-4-r-d=2b_1-(r-4)B-b
\]
and insert into \eqref{e:11} to obtain
\begin{equation}\label{e:simp}
0=(2-r)B^2+4Bb_1-b^2-2b_1+(r-4)B+b+4-d
\end{equation}
Note that $2b_1-(r-4)B-b\ge 0$, and thus if $d=2$ and $r=5$ we must have $k\ge 11$.  This is the cause for the restriction to $d<2$ in the case $k=10$ and $\tau=1$.  Therefore, all of the following arguments continue to hold when $k=10$, $d<2$ and $\tau=1$.

{$\bf r=0$}:  In this case \eqref{e:simp} becomes
\[
0=2B^2+4b_1B-b^2-2b_1-4B+b+4-d=
\]
\[
=B^2-b^2+2Bb_1-2b_1+(B-2)^2+2Bb_1-d+b
\]
which admits no solution.

{$\bf r=1$}:  As before,  \eqref{e:simp} becomes
\[
0=B^2-b^2+2Bb_1-2b_1+2Bb_1-3B+b+2+2-d
\]
where $2Bb_1-3B+b+2\ge \frac{1}{2}$.  Hence no solution exists.

{$\bf r=2:$}  Again \eqref{e:simp} becomes
\[
0=4Bb_1-b^2-2b_1-2B+b+4-d
\]
which can be rewritten to show that no solution exists here either.

{$\bf r=3,4:$} Write $b_1=B+\alpha$ and insert into \eqref{e:simp}.  Then again it can be shown that no solution exists.

{$\bf r=5:$} Again write $b_1=B+\alpha$ and insert into \eqref{e:simp}.  When $\alpha\ge 2$ it easily follows that  there exists no solution.  Otherwise $|b_1-B|<\frac{1}{2}$ and using this in \eqref{e:simp} to succesively estimate the differences of the $B$ and $b_1$ terms it can again be shown that no solution exists.

This completes the case with $\tau=1$ and $k\ge 11$.  As noted before, the cases with $0\le r\le 5$, $k=10$, $d<2$ and $\tau=1$ have also been completed.  It remains to consider $r=6$ in this setting.

{$\bf r=6$, $\bf d<2$ and $\bf k=10$:}  \eqref{e:11} reduces to
\[
0=2Bb-b^2-2dB+4=(B-1)^2-(B-b)^2+2B(1-d)+3
\]
which admits no solution when $d\le 1$.

We now turn to $\tau=3$, $k=10$ and $d=2$.   Rewriting \eqref{e:3a}, \eqref{e:a2} and \eqref{e:aeq}-\eqref{e:b'} it follows that

\beq\label{e:checkKey} [(r-5)B^2+(32-6r)B+9r-50]-(B-b')^2=0;\eeq
\beq\label{e:checkIneq}2b_1=(r-4)B+b'+3(6-r)-2.\eeq

One can then check the compatibility of these equations, case-by-case, for $r$ from $0$ to $6$.  For $r=6$ one easily shows explicitly $B=\frac{b'+2}{2}<b'$ from the first equation.  For $r=5$, from \eqref{e:checkIneq}, $2b_1=B+b'+1$ so $B-b'\le 1$. This again contradicts \eqref{e:checkKey}.

For $r\le4$, from \eqref{e:checkIneq} one deduces $2b_1\le(r-3)b'+16-3r$, implying $b_1\le4$ when $r=4$ and $b_1\le\frac{7}{2}$ when $r\le3$.  The minimum of the $B$-quadratic expression in \eqref{e:checkKey} is taken at $B=3$ when $1\le r\le4$ and $B=\frac{7}{2}$ for $r=0$.
Also $-(B-b')^2\ge -1$ for $r=4$ and $\ge-\frac{1}{4}$ for $r\le3$. Each case will imply a positive minimum in \eqref{e:checkKey}, which concludes our proof.

\end{proof}

As noted before, when $k=10$ this result implies that  $A$ must satisfy $K_{st}\cdot A=2$ and $b_{10}\in\{1,2\}$.  We now show that if $b_{10}=1$ we obtain no solutions either.


\begin{lemma}\label{l:b=1}Let $M=\mathbb CP^2\#10\overline{\mathbb CP^2}$.  Then there exists no reduced class $A\in H_2(M,\mathbb Z)$ with $A\cdot A=-4$, $K_{st}\cdot A\le 2$ and $b_{10}=1$.

\end{lemma}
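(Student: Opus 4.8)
The plan is to first pin down the only surviving value of $d := K_{st}\cdot A$ and then solve the remaining Diophantine system after a change of variables that turns the two constraints into a single perfect square. Write $A = aH - \sum_{i=1}^{10} b_i E_i$ in the standard basis, so the hypotheses become \eqref{e:3a}--\eqref{e:bconstraints} with $k=10$ and $\tau=1$ (the hypothesis $b_{10}=1$ forces $\min_i b_i \ge 1$). Since $\min_i b_i \ge 1$, Proposition \ref{t:-4smooth}(2) already rules out every $d<2$, so it suffices to treat $d=2$. This is precisely the case $\tau=1$, $d=2$, $k=10$ that the rearrangement Lemma \ref{l:reduce} does \emph{not} cover (cf.\ the Remark following its proof), so a direct argument is needed here.

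The key move is a normalization. With $d=2$ the equations read $\sum_{i=1}^{10} b_i = 3a+2$ and $\sum_{i=1}^{10} b_i^2 = a^2+4$. Setting $m = a-6$ and $x_i = b_i-2$, a one-line computation yields the clean relations $\sum_{i=1}^{10} x_i = 3m$ and $\sum_{i=1}^{10} x_i^2 = (a-6)^2 = m^2$; equivalently $\sum_{i=1}^{10}(b_i-2)^2 = (a-6)^2$. Because $x_{10} = b_{10}-2 = -1$ is fixed, splitting off the last coordinate tightens this to $\sum_{i=1}^{9} x_i = 3m+1$ and $\sum_{i=1}^{9} x_i^2 = m^2-1$, while \eqref{e:aineq}--\eqref{e:bconstraints} become $x_1 \ge x_2 \ge \cdots \ge x_9 \ge -1$ together with the reducedness bound $x_1+x_2+x_3 \le m$.

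From here everything is elementary. Cauchy--Schwarz applied to $(x_1,\dots,x_9)$ gives $m^2-1 \ge (3m+1)^2/9$, forcing $m \le -2$. Combined with $x_1+x_2+x_3 \le m \le -2$ and $x_1 \ge x_2 \ge x_3 \ge -1$ this yields $3x_3 \le -2$, hence $x_3=-1$ and therefore $x_3 = x_4 = \cdots = x_9 = -1$. The system collapses to two unknowns: $x_1+x_2 = 3m+8$ and $x_1^2+x_2^2 = m^2-8$, so $x_1 x_2 = 4(m+3)^2$. Reality of $x_1,x_2$ forces the discriminant $(3m+8)^2-16(m+3)^2 = -7m^2-48m-80 \ge 0$, i.e.\ $m \in [-4,\,-20/7]$, whereas reducedness $x_1+x_2 \le m+1$ forces $3m+8 \le m+1$, i.e.\ $m \le -4$. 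Hence $m=-4$, giving the double root $x_1 = x_2 = -2$, that is $b_1 = b_2 = 0$, which contradicts $b_1 \ge b_2 \ge b_3 = 1$. So no such class exists.

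The only genuine content is finding the substitution $b_i \mapsto b_i-2$, $a \mapsto a-6$ that compresses the quadratic and linear constraints into the identity $\sum (b_i-2)^2 = (a-6)^2$; once it is in hand, the case-by-case analysis needed elsewhere is replaced by a single Cauchy--Schwarz estimate plus a two-variable quadratic. I expect the same normalization to drive the companion Lemma \ref{l:b=2}: there $x_{10}=0$, the split-off bound is no longer strict, and the argument does not close off, leaving exactly the solution $x_1 = \cdots = x_{10} = 0$, i.e.\ the class $(6,2,\dots,2)$ of type (3).
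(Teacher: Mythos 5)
Your proof is correct, but it takes a genuinely different route from the paper's. The paper's argument is a three-line truncation: since $b_{10}=1$, deleting the last coordinate leaves a reduced class $(a,b_1,\dots,b_9)$ in $H_2(\mathbb{CP}^2\#9\overline{\mathbb{CP}^2})$ of square $-4+1=-3$, which is impossible because every reduced class in at most nine blow-ups has non-negative square (e.g.\ $a^2\ge(b_1+b_2+b_3)^2\ge b_1^2+b_2^2+b_3^2+6b_3^2\ge\sum_{i=1}^9 b_i^2$); notably this does not even use the hypothesis $K_{st}\cdot A\le 2$. You instead dispose of $d<2$ via Proposition \ref{t:-4smooth}(2) and then attack $d=2$ directly through the substitution $x_i=b_i-2$, $m=a-6$, whose content is the identity $(A+2K_{st})^2=0$ on $\mathbb{CP}^2\#10\overline{\mathbb{CP}^2}$ (since $K_{st}^2=-1$, $A^2=-4$, $K_{st}\cdot A=2$); I checked the arithmetic — the Cauchy--Schwarz bound $m\le-2$, the forced values $x_3=\cdots=x_9=-1$, the reduction to $x_1+x_2=3m+8$, $x_1x_2=4(m+3)^2$, and the pincer $-4\le m\le -7/2$ — and it all closes correctly at the contradiction $b_1=b_2=0$. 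What your approach buys is a uniform normalization adapted to the anticanonical direction, which explains structurally why type (3) classes $-aK_9-2E_{10}$ appear; what it costs is length and the extra dependence on Proposition \ref{t:-4smooth}. One caveat on your closing remark: the same normalization applied to the $b_{10}=2$ case does \emph{not} leave only $x_1=\cdots=x_{10}=0$; for instance $(9,3,\dots,3,2)$ gives $m=3$, $x_1=\cdots=x_9=1$, $x_{10}=0$, consistent with the one-parameter family $-a(-3H+\sum_{i=1}^9E_i)-2E_{10}$, $a\ge2$, of Lemma \ref{l:b=2} (which the paper handles via the minimal-genus formula of \cite{BL1} rather than by this elementary route).
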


\begin{proof}
Assume such a class exists.  If $A=(a,b_1,..,b_9,1)$, then the class $(a,b_1,..,b_9)$ in $H_2(\mathbb CP^2\#9\overline{\mathbb CP^2},\mathbb Z)$ is reduced and has square $-3$.  However, an easy computation shows every reduced class in $\mathbb CP^2\#9\overline{\mathbb CP^2}$ has non-negative square.  Hence no such class exists.
\end{proof}

\begin{lemma}\label{l:b=2}
Assume that a reduced class $A\in H_2(\mathbb CP^2\#10\overline{\mathbb CP^2},\mathbb Z)$  satisfies $A\cdot A=-4$ and $K_{st}\cdot A= 2$ with the additional restriction that $b_{10}=2$.  Then $A=-a(-3H+\sum_{i=1}^9 E_i)-2E_{10}$ for some $a\in \mathbb N$ and $a\ge 2$.

\end{lemma}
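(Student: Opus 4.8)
The plan is to convert the two defining conditions into the numerical identities \eqref{e:3a} and \eqref{e:a2}, peel off the contribution of the last coordinate $b_{10}=2$, and then observe that the resulting data forces equality in the Cauchy--Schwarz inequality, which pins down the class completely.

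Concretely, I would write $A=aH-\sum_{i=1}^{10}b_iE_i$ with $b_1\ge\cdots\ge b_{10}=2\ge0$. The hypotheses $A\cdot A=-4$ and $K_{st}\cdot A=2$ are precisely \eqref{e:a2} and \eqref{e:3a} with $d=2$ and $k=10$, that is $\sum_{i=1}^{10}b_i=3a+2$ and $\sum_{i=1}^{10}b_i^2=a^2+4$. First I would subtract the contribution $b_{10}=2$ and $b_{10}^2=4$ of the last coordinate to reach the balanced pair $\sum_{i=1}^{9}b_i=3a$ and $\sum_{i=1}^{9}b_i^2=a^2$.

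The crucial step is then to recognize that these two identities saturate Cauchy--Schwarz: since $\big(\sum_{i=1}^9 b_i\big)^2=9a^2=9\sum_{i=1}^9 b_i^2$, the inequality $\big(\sum_{i=1}^9 b_i\big)^2\le 9\sum_{i=1}^9 b_i^2$ is actually an equality, forcing $b_1=\cdots=b_9$. Writing this common value as $a'$ (an integer, since $A$ is integral), the identity $\sum_{i=1}^9 b_i=3a$ gives $9a'=3a$, hence $a=3a'$. The ordering $b_9\ge b_{10}=2$ yields $a'\ge 2$, and the reduced inequality $a\ge b_1+b_2+b_3$ holds with equality $3a'=3a'$. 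Reassembling the coefficients gives $A=3a'H-a'\sum_{i=1}^9 E_i-2E_{10}=-a'\big(-3H+\sum_{i=1}^9 E_i\big)-2E_{10}$ with $a'\ge 2$, which is exactly the asserted form.

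In contrast with the preceding Proposition \ref{t:-4smooth}, whose proof needs the rearrangement argument of Lemma \ref{l:reduce}, I do not expect a genuine obstacle here: the equality case of Cauchy--Schwarz does all the work, and the only items left to check, integrality of $a'$ and the bound $a'\ge 2$, are immediate. The single point worth emphasizing is that it is precisely the value $b_{10}=2$ that makes the subtraction produce the perfectly balanced system $\sum_{i=1}^9 b_i=3a$, $\sum_{i=1}^9 b_i^2=a^2$ with no slack; this is why the complementary cases $b_{10}=1$ (Lemma \ref{l:b=1}) and $b_{10}\ge 3$ (Proposition \ref{t:-4smooth}) must be excluded by different means.
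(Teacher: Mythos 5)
Your proof is correct, and it takes a genuinely different route from the paper's. The paper reduces to the $9$-tuple $(a,b_1,\dots,b_9)$ exactly as you do, observes that it defines a reduced class $A_9$ in $\mathbb{CP}^2\#9\overline{\mathbb{CP}^2}$ satisfying $3a=\sum b_i$, $a^2=\sum b_i^2$, and then invokes external machinery: the minimal genus formula of \cite{BL1} gives $f_{A_9}=1$, and Theorem 2 of \cite{BL1} classifies such classes as $A_9=-a(-3H+\sum_{i=1}^9E_i)$. Your argument replaces that citation with the observation that $\bigl(\sum_{i=1}^9 b_i\bigr)^2=9a^2=9\sum_{i=1}^9 b_i^2$ saturates Cauchy--Schwarz, forcing $b_1=\cdots=b_9$, after which integrality and $b_9\ge b_{10}=2$ finish the job. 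This is more elementary and entirely self-contained; what the paper's route buys is consistency with the toolkit (symplectic/minimal genus computations via \cite{BL1}) already in use throughout Section \ref{s:-4red}, but your version is arguably preferable as a proof of this particular lemma. Your closing remark correctly identifies why $b_{10}=2$ is the critical value: it is exactly the subtraction that leaves the balanced system with no slack, which is why the cases $b_{10}=1$ and $b_{10}\ge3$ require the separate arguments of Lemma \ref{l:b=1} and Proposition \ref{t:-4smooth}.
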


\begin{proof}
Then the tuple $(a,b_1,..,b_9)\in \mathbb Z^{10}$ satsifies
\begin{align}
3a=\sum b_i\\
 a^2=\sum b_i^2\\
a\ge b_1+b_2+b_3\\
b_i\ge b_{i+1}\ge 2
\end{align}
hence defines a reduced class  $A_9\in H_2(\mathbb CP^2\#9\overline{\mathbb CP^2},\mathbb Z)$.  Using the formula given for $f_{A_9}$ (to determine the minimal genus) in \cite{BL1}, we obtain $f_{A_9}= 1$.   Theorem 2, \cite{BL1}, thus implies that  $A_9=-a(-3H+\sum_{i=1}^9 E_i)$ for some $a\in \mathbb N$ and $a\ge 2$.  Therefore $A=A_9-2E_{10}$ as claimed.

\end{proof}

\subsubsection{Classification}
Together the results in this section lead to the following Theorem,
which completes the smooth classification of Theorem
\ref{t:spheres}, as well as implies the exclusiveness part of the
symplectic classification of Theorem \ref{t:spheres}:

\begin{theorem}\label{t:-4dmequiv}Let $M=\mathbb CP^2\#k\overline{\mathbb CP^2}$ with
$k\ge 1$ be a symplectic rational surface, and $A\in H_2(M,\mathbb
Z)$ with $A\cdot A=-4$. Then $A$ is represented by a smooth sphere
if and only if $A$ is D(M)-equivalent to one class in the following
list
\begin{enumerate}

\item $-H+2E_1-E_2$
\item $H-E_1-..-E_5$
\item $-a(-3H+\sum_{i=1}^9 E_i)-2E_{10}$ for some $a\in \mathbb N$ and $a\ge 2$
\item $2E_1$
\item $2(H-E_1-E_2)$

\end{enumerate}

Moreover,  for $A$ in (1), (2), (3),    $K_{st}\cdot A=2$, and there
is a symplectic form  $\omega$  with $K_\omega=K_{st}$ such that
$A\cdot [\omega]>0$; there is no symplectic form $\tau$ with canonical class $K_\tau$ satisfying
$[\tau]\cdot A>0$ and $K_\tau\cdot A=2$ for classes $A$ of the form in
(4), (5).  In particular,  classes of type (4) and (5) cannot be represented
by embedded symplectic spheres for any symplectic form.

\end{theorem}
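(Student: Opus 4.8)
The plan is to obtain the smooth ``if and only if'' by assembling the homological lemmas already established, and then to prove the two symplectic assertions separately; the genuinely new input is the exclusion of types (4) and (5), and the key there is to pass from $A$ to a primitive square $(-1)$ class.

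First I would settle the smooth classification. By Lemma \ref{l:hom}, up to $D(M)$-equivalence any $A$ with $A\cdot A=-4$ is either reduced (for $k\ge3$) or one of the four classes $-H+2E_1-E_2$, $2E_1$, $2(H-E_1-E_2)$, $H-E_1-\cdots-E_5$, i.e. exactly types (1),(4),(5),(2). Since a reduced class of negative square forces $k\ge10$, a reduced class with some $b_i=0$ descends to an all-positive reduced class in a smaller blow-up and contributes nothing new; so it suffices to treat reduced classes with $\min\{b_i\}\ge1$. For these, if $A$ is a smooth sphere then $K_{st}\cdot A\le2$ by Lemma \ref{l:Kbound}, and Proposition \ref{t:-4smooth} eliminates every such class except the case $k=10$, $K_{st}\cdot A=2$, $b_{10}\in\{1,2\}$; Lemma \ref{l:b=1} discards $b_{10}=1$ and Lemma \ref{l:b=2} identifies $b_{10}=2$ as type (3). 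This is the ``only if'' direction. For the converse, types (1),(2),(3) acquire (even symplectic) sphere representatives from the tilted-transport construction of Section \ref{s:Con-4}; types (4),(5) are of the form $2B$ with $B$ a $(-1)$-sphere class, and a tubular neighborhood of such $B$ is diffeomorphic to $\overline{\mathbb{CP}^2}$ minus a ball, inside which $2B$ is represented by the image of a smooth conic.

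Next I would verify the positivity statement for (1),(2),(3). The identity $K_{st}\cdot A=2$ is a one-line computation in the standard basis: $K_{st}\cdot(-H+2E_1-E_2)=3-2+1=2$, $K_{st}\cdot(H-E_1-\cdots-E_5)=-3+5=2$, and $K_{st}\cdot\bigl(3aH-a\sum_{i=1}^9E_i-2E_{10}\bigr)=-9a+9a+2=2$. The existence of $\omega$ with $K_\omega=K_{st}$ and $[\omega]\cdot A>0$ is automatic for type (3), which is reduced, by the result of \cite{LL2} that $K_{st}\in\mathcal K_A$ for reduced $A$. For the two non-reduced types I would exhibit an explicit class of the $K_{st}$-cone: for $H-E_1-\cdots-E_5$ take $\alpha=MH-\epsilon\sum_iE_i$ with $M$ large and $\epsilon>0$ small, so that $\alpha$ pairs positively with every exceptional class and $\alpha\cdot A=M-5\epsilon>0$; for $-H+2E_1-E_2$ tilt the weight toward $E_1$, say $\alpha=aH-c_1E_1-\epsilon\sum_{i\ge2}E_i$ with $c_1<a<2c_1$, giving $\alpha\cdot A=2c_1-a-\epsilon>0$ while $\alpha^2>0$ and $\alpha\cdot E>0$ for all $E\in\mathcal E_\omega$.

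The crux is the exclusion of types (4) and (5). Write $A=2B$, where $B=E_1$ (type 4) or $B=H-E_1-E_2$ (type 5); in each case $B$ is a primitive class with $B\cdot B=-1$ that is \emph{itself} represented by a smooth sphere, hence of minimal genus $0$. Because the symplectic genus is $D(M)$-invariant and bounded above by the minimal genus, $\eta(B)\le0$. Now suppose $\tau$ is any symplectic form with $[\tau]\cdot A>0$; then $[\tau]\cdot B>0$, so $K_\tau\in\mathcal K_B$ and therefore $\eta_{K_\tau}(B)\le\eta(B)\le0$. Since $\eta_{K_\tau}(B)=\tfrac12(K_\tau\cdot B+B\cdot B)+1=\tfrac12(K_\tau\cdot B-1)+1$, this forces $K_\tau\cdot B\le-1$ and hence $K_\tau\cdot A=2K_\tau\cdot B\le-2$, so in particular $K_\tau\cdot A\ne2$. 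A symplectic sphere in class $A$ would satisfy the adjunction identity $K_\tau\cdot A=-A\cdot A-2=2$, so no such sphere can exist for any form. The essential and slightly delicate point, which I expect to be the main obstacle, is that one must descend to the primitive half $B$: applying the symplectic-genus bound to $A=2E_1$ directly gives only $\eta_{K_\tau}(A)\le0$, i.e. $K_\tau\cdot A\le2$, which fails to exclude equality, whereas halving sharpens it to $K_\tau\cdot A\le-2$. This sharpening is available precisely because the primitive half is a smooth sphere, and it is exactly what fails for type (3): when $a$ is even, $A$ is again twice a primitive square $(-1)$ class (e.g. $A=-2K_{st}$ for $a=2$, $k=10$), but that half has positive minimal genus and is not exceptional, so $K_\tau\cdot B\le-1$ need not hold and symplectic representability remains consistent. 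Making this dichotomy precise is the real content; once the correct invariant $\eta(B)$ is in place, the remaining verifications are routine.
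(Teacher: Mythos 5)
Your proposal is correct, and its overall architecture coincides with the paper's: the smooth classification is assembled from Lemma \ref{l:hom}, Lemma \ref{l:Kbound}, Proposition \ref{t:-4smooth} and Lemmata \ref{l:b=1}--\ref{l:b=2} exactly as in the text (including the implicit reduction to reduced classes with all $b_i\ge1$), existence for types (1)--(3) is delegated to the tilted transport and explicit $K_{st}$-forms, and the positivity verifications match the paper's (your forms for types (1) and (2) are in fact spelled out more carefully than the paper's terse ``$aH-\sum\epsilon_iE_i$''). The one step where you take a genuinely different route is the exclusion of types (4) and (5). The paper argues: if $K_\tau\cdot(2E_1)=2$ then $K_\tau\cdot(-E_1)=-1$, so Theorem A of \cite{TJLL2} makes $-E_1$ a $\tau$-symplectic exceptional sphere, forcing $[\tau]\cdot(2E_1)<0$. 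You instead halve $A=2B$ and apply the symplectic genus machinery already set up in Section \ref{s:-4red}: since $B$ is a smooth sphere, $\eta_{K_\tau}(B)\le\eta(B)\le0$ whenever $[\tau]\cdot B>0$, which yields $K_\tau\cdot B\le-1$ and hence $K_\tau\cdot A\le-2$. The two arguments are logically contrapositive repackagings of the same Li--Liu input, but yours has the advantage of staying entirely within the invariant $\eta$ that the paper has already introduced (no extra citation needed), and your closing remark correctly isolates why the argument does not also kill type (3) for even $a$ --- the primitive half there has positive minimal genus, so the bound $\eta(B)\le0$ is unavailable. Your alternative smooth construction for $2E_1$ and $2(H-E_1-E_2)$ via a conic in $\overline{\CP^2}$ minus a ball is also valid and replaces the paper's pushoff-and-smoothing argument.
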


\begin{proof}  Assuming that $A$ is represented by a smooth sphere,
Lemma \ref{l:hom} gives all the classes in the list except (3).
This last class follows from the results of Section \ref{s:-4red},
and all other possibilities are excluded.

The class $H-E_1-..-E_5$ can clearly be represented by an embedded
symplectic sphere for some symplectic form $\omega$ with
$K_\omega=K_{st}$.   The class $-H+2E_1-E_2$ can be viewed as the
blow-up of a section in a Hirzebruch surface.


We will show in Section \ref{s:Con-4} that $-a(-3H+\sum_{i=1}^9
E_i)-2E_{10}$ can be represented by symplectic spheres for some
symplectic forms, hence also has smooth representatives.  This is a
slight overkill: a smooth representative of this class could be
constructed directly by the circle sum construction in \cite{LL3}.
We leave that for interested readers.

$K_{st}\cdot A=2$ is clear in (1),(2) and (3).  Choosing $0<\epsilon_i<<1$
appropriately, a symplectic form in the class $aH-\sum
\epsilon_iE_i$ has the standard canonical class and pairs positively
with $A$ in these cases.

Now we analyze the classes of type (4) and (5).  The class $2E_1$ is
smoothly representable by a sphere: Consider a smooth sphere in the
class $E_1$. A small pushoff of this sphere produces a second
exceptional sphere in the same class intersecting once, a smoothing
of this will produce a smooth sphere in the class $2E_1$.


Notice that $K_{\tau}\cdot 2E_1=2$ implies that $-E_1$ can be
represented as a $\tau$-symplectic sphere from Theorem A of
\cite{TJLL2}. But this means $[\tau]\cdot (2E_1)<0$. The argument is valid
for $2(H-E_1-E_2)$, by noticing that $H-E_1-E_2$ is also an
exceptional class.


\end{proof}

\brmk\label{r:algorithm} For completeness, we describe the
explicit algorithm producing necessary $D(M)$-equivalences for
$A=aH-\sum b_iE_i$ throughout this section, regardless of the value of
its square. With such an algorithm, one may determine in a finite number of
steps whether a given homology class is represented by a smooth or
symplectic sphere given the theorems proven here.  This procedure is
implicit in \cite{LL2} and the proof of Lemma \ref{l:hom}.

\begin{enumerate}

\item If $a<0$,  just change it to  $-a$  using   reflection along  the
$+1$ sphere $H$.

\item If $b_i < 0$, change it to $-b_i$ using reflection
along the $-1$ sphere $E_i$.

\item Arrange $b_i \geq b_{i+1}$  using reflections along $E_i -
E_{i+1}$.

\item Reflect along $H-E_1-E_2-E_3$.

\item Repeat the above process until one arrives at $k=2$, $k=3$
or $b_1^2+b_2^2+b_3^2-3\le a^2\le \frac{3}{4}(b_1^2+b_2^2+b_3^2)$.
Proceed as in Lemma \ref{l:hom}.

\end{enumerate}

\ermk

\subsection{Tilted transport: constructing symplectic (-4)-spheres}\label{s:Con-4}

\subsubsection{Reduction from Theorem \ref{stability}}
We now consider the existence part of Theorem \ref{t:spheres}. From the
assumptions we may assume $A$ has the form specified as type (1),(2)
or (3) in the list of Theorem \ref{t:-4dmequiv}.

We consider type (1).  By applying an appropriate diffeomorphism we
assume $A=-H+2E_1-E_2$.  Since $D(M)$ acts transitively on the set
of symplectic canonical classes (Lemma \ref{trans}), they are all of
the form $\pm 3H+\sum \pm E_i$.  Any canonical class with $K_\w\cdot
A=2$ must have $K_\w=-3H+E_1+E_2+\sum_{i\ge3}\pm E_i$.  Applying
trivial transforms on $E_i$ for $i\ge 3$ will not affect the pairing
$K_\w\cdot A$ or $\w\cdot A$.  Hence we may assume $K_\w=K_{st}$.
Any symplectic form with canonical class $K_{st}$ are deformation
equivalent by \cite{MP94}, hence it suffices to construct a
symplectic sphere for \textbf{some} symplectic form associated to
$K_{st}$ when $A$ is precisely the class $-H+2E_1-E_2$.

For type (2) we again assume $A=H-E_1-\dots -E_5$.  In this case
$K_\w$ may be one of the following: it either equals
$-3H+E_1+\dots+E_5+\sum_{i\ge5}\pm E_i$, or
$3H-E_1-E_2-E_3+E_4+E_5+\sum_{i\ge5}\pm E_i$ up to reordering the
first five exceptional classes.  In the latter case, we apply
further trivial transforms on $H$ and $E_{1,2,3}$ so that $A$ is
transformed into $A'=-H+E_1+E_2+E_3-E_4-E_5$. However, $A'$ is again
equivalent to $A$ by reflection along
$H-E_1-E_2-E_3$, which does not change the canonical class.  The
conclusion is $A$ can always be assumed to have the form
$H-E_1-\dots -E_5$ while the canonical class can be assumed to be $K_{st}$
simultaneously.

A similar reduction holds for type (3) and we give only a sketch: when
$A=-a(-3H+\sum_{i=1}^9 E_i)-2E_{10}$ and $K_\w\cdot A=2$, then
$K_\w=(-1)^{\delta}(-3H+\sum_{i=1}^9 E_i)+E_{10}+\sum_{i\ge 11}\pm
E_{i}$.  When $\delta=0$, the reduction works exactly as in the type (1)
case.  When $\delta=1$, again from Theorem A of \cite{TJLL2}, $-H$
and $-E_{j}$ for $j\le 9$ and $E_{10}$ are all represented by
$\w$-symplectic spheres.  This implies $\w(A)<0$ thus excluded by
our assumption.

To summarize our discussion, we have the following reduction of
Theorem \ref{t:spheres}:

\blem\label{l:FinalRed} Assume $A$ equals any one of the classes of type (1), (2) or (3) specified in Theorem \ref{t:spheres}.  If $A$ is represented by a
symplectic sphere for some symplectic form $\w$ with $K_\w=K_{st}$, then Theorem \ref{t:spheres} holds.

\elem

We would like to emphasize that $A$ is assumed to equal the classes
in Theorem \ref{t:spheres} instead of being only $D(M)$-equivalent.  Also
recall that when $K_\w=K_{st}$,  the
symplectic manifold can be assumed to be obtained by blow-ups of
symplectic $\CP^2$.

It is not difficult to verify Lemma \ref{l:FinalRed} for classes
$-H+2E_1-E_2$ and $H-E_1-..-E_5$: one may choose a symplectic form
$\w$ where $\w(E_i)$ are small enough, then the former class has
representatives as iterated blow-ups from an $H$-sphere.  For the
latter class, by a change of basis, they are the class $F-2S$ in
$S^2\times S^2\#(k-1)\overline{\CP^2}$ which clearly has a
symplectic representative (here $F$ and $S$ denotes the fiber and
base homology classes in $S^2\times S^2$).  Therefore, the following
lemma implies Theorem \ref{t:spheres}:

\blem\label{l:reduction2b} The class $-a(-3H+\sum_{i=1}^9
E_i)-2E_{10}$ has an $\w$-symplectic representative for some $\w$
with $K_\w=K_{st}$.

\elem

The proof of this lemma will occupy the rest of this section.


\subsubsection{The Tilted Transport}

We start our discussion in a more general context.  Let
$\pi:(E^{2n},\w_E)\rightarrow D^2$ be a symplectic Lefschetz
fibration.  This means:

\begin{itemize}
  \item $(E,\w_E)$ is a symplectic manifold with boundary $\pi^{-1}(\partial
  D^2)$;
  \item $\pi$ has finitely many critical points $p_0,\dots,p_n$ away
  from $\partial D^2$, while $\pi^{-1}(b)$ is a closed symplectic
  manifold symplectomorphic to $(X,\w)$ when $b\neq \pi(p_i)$ for any
  $i$.
  \item Fix a complex structure $j$ on $D^2$.  There is
  another complex structure $J_i$ defined near $p_i$, so that $\pi$
  is $(J,j)$-holomorphic in a holomorphic chart $(z_1,\dots,z_n)$ near $p_i$, and under this chart,
  $\pi$ has a local expression $(z_1,\dots, z_n)\mapsto
  z_1^2+\dots+z_n^2$.

\end{itemize}

Take a regular value of $\pi$, $b_0\in D^2$ as the base point.
Suppose one has a submanifold $Z^{2r-1}\subset \pi^{-1}(b_0)$. We
say $Z$ has \textit{isotropic dimension 1} if at each $x\in Z$,
$(T_xZ)^{\perp\w}\cap T_xZ=\R\langle v_x\rangle$.  We call $v_x$ an
\textit{isotropic vector} at $x$.  A simple example of a submanifold
of isotropic dimension 1 is a contact type hypersurface.  A special
case more relevant to us is a closed curve on a surface.

Suppose we have a (based) Lefschetz fibration $(E,\pi, b_0)$ with a
submanifold $Z\subset \pi^{-1}(b_0)$ of isotropic dimension 1.  Let
$\gamma(t)\subset D^2$ be a path with $\gamma(0)=b_0$.  Assume
$\gamma(t)\neq \pi(p_i)$ for all $t$ and $i$.  Notice there is a natural
symplectic connection on $E$ in the complement of singular points as
a distribution: for $x\in E\backslash \coprod_{i=1}^n \{p_i\}$, the
connection at $x$ is defined by $(T^vE)_x^{\perp\w}$ .  Here $T^vE$
is the subbundle of $TE$ defined by vertical tangent spaces
$T(\pi^{-1}(\pi(x)))$ at point $x$.  $E|_\gamma=\pi^{-1}(\gamma)$
  thus inherits this
connection  and thus a trivialization by parallel transports.  The
symplectic connection also defines a unique lift of $\gamma'$ to a
vector field of $E|_\gamma$.  We will use $\pi^{-1}(\gamma')$ to
represent this lift.

Now choose a vector field $V$ on $E|_\gamma$ tangent to the fibers; one
obtains a flow defined by $V+\pi^{-1}(\gamma')$.  Suppose the
following holds:

\begin{condition}\label{cond:c}\hfill
\begin{itemize}
  \item  $Z_t\subset E_{\gamma(t)}$ is the time $t$-flow of $Z_0=Z$,
  and each $Z_t$ is of isotropic dimension 1.
  \item For any $x_t\in Z_t$, let $v_{x_t}$ be the isotropic vector.  Then $\w(V,v_{x_t})\neq0$.
\end{itemize}

\end{condition}

Let $\widehat Z=\coprod Z_t$.  It is then easy to see that $\wh Z$
is a symplectic submanifold of $E$ with boundary on $E_{\gamma(0)}$
and $E_{\gamma(1)}$.  We call $\wh Z$ a \textit{tilted transport} of
$Z$.  A special case is when $\gamma(0)=\gamma(1)$ and $Z_0=Z_1$. In
such cases, one could be able to adjust $V$ appropriately so that
$\wh Z$ is a smooth closed symplectic submanifold, which we will
call a \textit{tilted matching cycle}. In the following we sometimes write
$\wh Z^\gamma$ and $Z^\gamma_t$ to emphasize the dependence on the
path $\gamma$.

\brmk
One notices that the symplectic isotopy class of the
tilted transport is independent of specific choices inside an isotopy classes of the
auxiliary data ($\gamma(t), V$ etc.).  However, it could happen that
the auxiliary data form a space with more than one component, e.g.
when the fibers are of dimension 2, then the choice of $V$ has at
least 2 connected components.   Moreover, in general there is no guarantee for a
Hamiltonian isotopy instead of  a symplectic isotopy.

\ermk

\brmk The tilted transport construction as described here can be
easily generalized in many ways.  A most interesting generalization
is that one could admit $V$ with singularities thus change the
topology of $Z_t$ when $t$ evolves.  We will explore further
applications of such constructions in upcoming work.\ermk

\subsubsection{Construction of (-4)-spheres}

Let us now specialize the tilted transport construction to the case of $-4$-spheres in the class
$-a(-3H+\sum_{i=1}^9 E_i)-2E_{10}$.  We will continue to use the
notation in the previous section.

Take the usual Lefschetz fibration by elliptic curves on
$E(1)=\CP^2\#9\overline\CP^2$, which can be endowed with a K\"ahler
form $\w$ compatible with the fibration structure. It suffices to
restrict the fibration to a neighborhood of a singular fiber,
yielding a fibration over $0\in D=D^2(2)\subset \C$, $1$ being the
unique critical value and the generic fiber an elliptic curve.
Denote by $p_0$ the unique critical point of this fibration.

Take $0\in D$ as the base point $b_0$.  From the usual construction
of vanishing cycles, there is a circle $C\subset \pi^{-1}(0)$ which
has the following property.  Consider $\gamma:[0,1]\rightarrow D$,
$\gamma(t)=t$, then:

$$\lim_{t\rightarrow 1}\phi_t(y)=p_0\Longleftrightarrow y\in C.$$

Here $\phi_t$ is the parallel transport using the induced symplectic
connection along $\gamma(t)$.  Let $\pi^{-1}(0)$ be identified with
a symplectic $\T^2=S^1\times S^1$, where the two $S^1=\R/\Z$ factors
are parametrized by $s,r\in [0,1]$, and $C$ is identified with
$\{r=0\}$. Take a neighborhood of $C$ as $S^1\times
[-\delta,\delta]\subset \T^2=\pi^{-1}(0),\delta\ll 1$. Assume
without loss of generality also that the symplectic orientation is
given by $\partial_s\wedge\partial_r$, i.e.
$\w(\partial_s,\partial_r)>0$. We propagate this coordinate to
$E_\gamma\backslash p_0$ by the symplectic connection.  Let
$V=\frac{\delta}{2}\cdot\partial_r$, we define a tilted transport of
$C_0=S^1\times\{-\delta\}$ from $E_0$ to $E_1$, denoted as
$\Sigma_0'$.  Now $C_0^\gamma=\partial(\Sigma_0')\cap E_1$ bounds
two symplectic disks on $E_1$ (which is a fish-tail), but only one
of them concatenates by the correct orientation with $\Sigma_0'$.
Concretely, this disk is precisely the image of the usual parallel
transport of $S^1\times [0,\frac{\delta}{2}]$ to $E_1$.  We denote
this symplectic disk on $E_1$ as $\Sigma_0''$.  A suitable smoothing
of $\Sigma_0'\cup\Sigma_0''$ yields a symplectic disk $\Sigma_0$
with boundary $C_0$ on $E_0$.  This is a symplectic variant of the usual
vanishing thimble construction.

Now choose another embedded curve $\bar\gamma(t)\subset D^2$ so that

\[\left\{ \begin{aligned}
         \bar\gamma(0)=0,\bar\gamma(1)=1; \\
          \bar\gamma'(0)=-\gamma'(0);\\
          \gamma\cap\bar\gamma=\{0,1\}.
                          \end{aligned} \right.
                          \]

Again $E_{\bar\gamma}$ inherits a symplectic connection thus a
trivialization in the complement of the singular fiber and we can
trivialize this part of the pull-back fiber bundle using the
symplectic connection and parametrize it by coordinate
$(s,r,t)\subset S^1\times S^1\times [0,1)$ as before.  We may adjust
the fibration appropriately over $\bar\gamma$ so that the vanishing cycle along
$\bar\gamma$ is again $C=S^1\times \{0\}\times \{0\}$.  Let
$W_a=-(a-2\delta)\partial_r$.  The tilted transport associated to
$\bar\gamma$ and $W_a$ starting from $C_0$ gives a symplectic annuli $\Sigma_1'$ with
boundary $C_0$ and $C^{\bar\gamma}_1\subset E_1$.  $C^{\bar\gamma}_1$
again bounds two symplectic disks on the singular fiber, and we can
take the image of $S^1\times [0,\delta]$ under the (usual) parallel
transport $\Sigma_1''$.  The union $\Sigma_1'\cup\Sigma''_1$ again
forms a symplectic disk with boundary $C_0$ after smoothing.

Now the union $\Sigma_0\cup\Sigma_1$ then matches to form a
smoothly immersed symplectic $S^2$, with adjustments on $V$ and
$W_a$ near $C$ if necessary.  This symplectic $S^2$ is denoted as
$\Sigma$, and it has a unique double point at $p_0$ and is embedded
otherwise. It is not hard to see from our construction of $W_a$ that
$[\Sigma]=-aK$ for $K$ being the Poincare dual of the canonical class of
$(E(1),\omega)$, which is homologous to a fiber class: by resolving the
self-intersection at $p_0$, one has an embedded surface in $E(1)$
which is smoothly isotopic to a multiple cover of a generic fiber of
the fibration.  One may then perform a small symplectic blow-up at
$p_0$ which resolves the self-intersection and  which yields an embedded
symplectic sphere with class $-aK-2E_{10}$ for any $a\ge1$ in $\CP^2\#10\overline{\CP^2}$ for an appropriate
symplectic form .

This proves Lemma \ref{l:reduction2b} and hence the proof of Theorem \ref{t:spheres} is complete.

\subsection{Spheres with self-intersection $-1$,$-2$ and $-3$\label{s:-1-2-3}}

To begin, we note that, when $b^-(M)=0$, there are no spheres of
negative intersection. When $b^-(M)=1$,  the only rational manifolds
are  $S^2\times S^2$ and $\mathbb CP^2\#\overline{\mathbb CP^2}$.
Due to the existence of an orientation reversing diffeomorphism, the
negative square case can be reduced to the positive square case. The
minimal genus of $A\in H_2(M,\mathbb Z)$ in these cases has been
determined in \cite{R} and from this all symplectic spheres can be
determined.

So we generally assume in the following that $b^-(M)\geq 2$.

\subsubsection{Spheres with Self-Intersection $-1$,$-2$}

Spheres with square $-1$ are exceptional spheres. They are all
$D(M)$-equivalent to either $E_1$ or $H-E_1-E_2$ from \cite{LL2}.

We now consider spheres with self-intersection $-2$.  A
classification of smooth $-2$-spheres can be found in \cite{LL2}.  For rational manifolds $M$,  Lagrangian $-2$-spheres have only recently been
classified in \cite{LW}.

\begin{prop}[Lemma 3.4, Lemma 3.6, \cite{LL2}]\label{2-standard}Let $M $ be a rational manifold.  Assume that  $b^-(M)\geq 2$.  Let $A\in H_2(M,\mathbb Z)$ with $A^2=-2$.  Assume that $A$ is represented by a smoothly embedded sphere.  Then up to
the action of $D(M)$, $A$ is one of the following:

\begin{enumerate}
\item If $A$ is characteristic, then $b^-(M)=3$ and $A=H-E_1-E_2-E_3$.
\item If $A$ is not characteristic, then $A= E_1-E_2$.
\end{enumerate}

\end{prop}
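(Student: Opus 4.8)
The plan is to follow the template of the smooth $(-4)$-analysis in Lemma~\ref{l:hom} and Section~\ref{s:-4red}, which is dramatically simpler in the $(-2)$ case because the region surviving reduction is finite and small. Write $A = aH - \sum_{i=1}^k b_i E_i$. First I would apply the normalization algorithm of Remark~\ref{r:algorithm}: reflections along $H$, along the $E_i$, along the $E_i - E_{i+1}$, and along $H - E_1 - E_2 - E_3$ all lie in $D(M)$ (Lemma~\ref{trans}), so up to $D(M)$-equivalence I may assume $b_1 \ge b_2 \ge \cdots \ge b_k \ge 0$ and that $A$ is \emph{either} reduced \emph{or} satisfies the bounded inequality $b_1^2 + b_2^2 + b_3^2 - 2 \le a^2 \le \tfrac34(b_1^2+b_2^2+b_3^2)$, exactly as in the proof of Lemma~\ref{l:hom} with $4$ replaced by $2$. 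Because every element of $D(M)$ is an isometry of the intersection form, this process preserves $A^2 = -2$ and, crucially, the property of being characteristic; so it suffices to classify the resulting normal forms and then sort them by the characteristic dichotomy.

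Next I would eliminate the reduced classes. If a reduced class $A$ is represented by a smooth sphere, then Lemma~\ref{l:Kbound} gives $K_{st}\cdot A \le -2 - A^2 = 0$. On the other hand I claim every reduced class of square $-2$ has $K_{st}\cdot A = \sum_i b_i - 3a > 0$, a contradiction. Indeed $b_3 = 0$ is impossible, since then $a \ge b_1 + b_2$ forces $a^2 \ge (b_1+b_2)^2 = \sum_i b_i^2 = a^2 + 2$; so $b_3 \ge 1$, and from $b_i \le b_3$ for $i \ge 4$ one gets $\sum_{i \ge 4} b_i^2 \le b_3 \sum_{i \ge 4} b_i$. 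Substituting $\sum_i b_i^2 = a^2 + 2$ and using the reduced bound $b_3 \le (b_1+b_2+b_3)/3 \le a/3$, a short estimate gives $\sum_i b_i > 3a$. This single computation disposes of all reduced candidates, in particular of large-$k$ classes such as $3H - E_1 - \cdots - E_{11}$, for which $K_{st}\cdot A = 2 > 0$.

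It then remains to enumerate the bounded region. Combining $a^2 \le \tfrac34(b_1^2+b_2^2+b_3^2)$ with $a^2 = \sum_i b_i^2 - 2$ yields $\tfrac14(b_1^2+b_2^2+b_3^2) + \sum_{i\ge4} b_i^2 \le 2$, whence $\sum_i b_i^2 \le 10$ and $a \le 2$; only finitely many tuples remain, and testing $a^2 = \sum_i b_i^2 - 2$ leaves precisely $(a;b) = (0;1,1)$, $(1;1,1,1)$ and $(2;2,1,1)$. The classes $-E_1-E_2$ and $2H - 2E_1 - E_2 - E_3$ are non-characteristic, and a reflection along $H - E_1 - E_2 - E_3$ and/or sign changes carries each to $E_1 - E_2$, giving case~(2). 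The remaining class $H - E_1 - E_2 - E_3$ is characteristic exactly when every coefficient is odd, i.e. precisely when $k = b^-(M) = 3$ (for $k > 3$ the coefficient $b_4 = 0$ is even, so the class is non-characteristic and again reduces to $E_1 - E_2$); since characteristicness is $D(M)$-invariant it is inequivalent to $E_1 - E_2$, giving case~(1). This exhausts the possibilities.

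The step I expect to be the main obstacle is the reduction itself, i.e. justifying the $(-2)$-analogue of the Li--Li normal form: showing that iterating the reflections of Remark~\ref{r:algorithm} terminates with either a reduced class or a class in the stated bounded region. This is the only part that is an induction (on a complexity such as $a$ or $\sum_i b_i$) rather than a finite verification, and one must check that the reflection along $H - E_1 - E_2 - E_3$ strictly decreases this complexity whenever $A$ is not yet in normal form. Once the normal form is secured, the reduced-class exclusion and the finite enumeration are both routine, and the characteristic/non-characteristic split transparently separates $H - E_1 - E_2 - E_3$ (which forces $b^-(M) = 3$) from $E_1 - E_2$.
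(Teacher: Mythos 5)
Your argument is correct, and it is essentially the approach the paper itself uses for the $(-3)$ and $(-4)$ classifications (normalize via $D(M)$ to a reduced class or the bounded region, kill reduced classes with the symplectic-genus bound $K_{st}\cdot A\le -2-A^2$ of Lemma \ref{l:Kbound}, then finitely enumerate); for this particular statement the paper offers no proof at all but simply cites Lemmas 3.4 and 3.6 of \cite{LL2}, which proceed by the same method. Your one computational claim that every reduced class of square $-2$ has $K_{st}\cdot A>0$ does check out (with $s=b_1+b_2+b_3\le a$ and $b_3\ge1$ one gets $\sum_i b_i\ge 3s+\tfrac{3(a^2+2-s^2)}{s}=\tfrac{3(a^2+2)}{s}>3a$), and the reduction/termination step you flag as the main gap is exactly the part the paper also imports from Li--Li without reproving.
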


This proves one aspect of Speculation \ref{spec:isphere} for spheres with $A\cdot A=-2$.  The following completes Speculation \ref{spec:isphere} in the $-2$ case for rational manifolds.

\begin{prop}
Let $(M,\omega)$ be a symplectic rational manifold  and $A\in H_2(M,\mathbb Z)$ such that $A\cdot
A=-2$.  Then $A$  is represented by a $\omega$-symplectic sphere for some
symplectic form if and only if\begin{enumerate}
\item $g_\omega(A)=0$,
\item $[\omega]\cdot A>0$ and
\item  $A$  is represented by a smooth sphere
\end{enumerate}

Moreover,   when $b^-(M)\ne 3$, $V$ can be chosen to be the blow-up of an exceptional sphere.
\end{prop}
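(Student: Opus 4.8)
The plan is to combine the smooth classification of Proposition \ref{2-standard} with an explicit model construction and the $\kappa=-\infty$ stability of Corollary \ref{irrcor}, producing an honest $\omega$-symplectic sphere (for the given $\omega$), which a fortiori yields a symplectic sphere for some form. Necessity is immediate: if $V$ is a connected $\omega$-symplectic sphere with $[V]=A$, then $A$ is a smooth sphere, giving (3); $[\omega]\cdot A=\int_V\omega>0$ gives (2); and the adjunction equality $K_\omega\cdot A+A\cdot A=2g-2=-2$ together with $A\cdot A=-2$ forces $K_\omega\cdot A=0$, whence $g_\omega(A)=\tfrac12(K_\omega\cdot A+A\cdot A)+1=0$, which is (1). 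I record for later that, since $A\cdot A=-2$, condition (1) is \emph{equivalent} to $K_\omega\cdot A=0$.

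For sufficiency assume (1)--(3) and, as in Proposition \ref{2-standard}, $b^-(M)\ge 2$. That proposition identifies $A$ up to $D(M)$-equivalence with one of two standard models: the non-characteristic class $A_0=E_1-E_2$, or (only when $A$ is characteristic and $b^-(M)=3$) the class $A_0=H-E_1-E_2-E_3$. First I would produce an explicit pair $(\omega_0,V_0)$ with $K_{\omega_0}=K_{st}$ realizing $A_0$ symplectically. For $A_0=E_1-E_2$, realize $M$ as a blow-up of $\CP^2\#(k-1)\overline{\CP^2}$ at a point lying on a symplectic exceptional sphere $S$ in class $E_1$; the proper transform of $S$ is an embedded symplectic $(-2)$-sphere in a class $D(M)$-equivalent to $E_1-E_2$, of positive area once the blow-up weight is small. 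This exhibits $V_0$ as the blow-up of an exceptional sphere, which already gives the \emph{Moreover} clause whenever $b^-(M)\ne 3$ (the only model available there being $E_1-E_2$). For the characteristic model in $\CP^2\#3\overline{\CP^2}$ I would instead take the proper transform of a line through three collinear blown-up points, an $\omega_0$-symplectic $(-2)$-sphere that is \emph{not} a blow-up of an exceptional sphere, which is exactly why $b^-(M)=3$ is excluded in the addendum.

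Next I would transport $(\omega_0,V_0)$ to the given $\omega$. Choose a diffeomorphism $g$ with $g_*A_0=A$ and replace $\omega$ by $\tau:=g^*\omega$; then the class becomes the standard $A_0$, while $\tau\cdot A_0=[\omega]\cdot A>0$ and $K_\tau\cdot A_0=K_\omega\cdot A=0$ by (1). Because $\kappa(M)=-\infty$, Lemma \ref{trans} provides a further diffeomorphism \emph{fixing} $A_0$ that carries $K_\tau$ to $K_{st}$; this canonical-class normalization is exactly the maneuver performed in the proof of Lemma \ref{l:FinalRed}, and is possible precisely because $K_\tau\cdot A_0=0=K_{st}\cdot A_0$. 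Calling the normalized form $\tau'$, one has $K_{\tau'}=K_{st}=K_{\omega_0}$ and $\tau'\cdot A_0>0$. Now Corollary \ref{irrcor}, applied to the single-vertex configuration $V_0$ in the $\kappa=-\infty$ manifold $M$, yields a $\tau'$-symplectic sphere smoothly isotopic to $V_0$ in class $A_0$; pushing it back through the two diffeomorphisms produces the desired $\omega$-symplectic sphere in class $A$.

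The hard part is the canonical-class normalization of the third paragraph: I must check that the subgroup of $D(M)$ fixing $A_0$ acts transitively on the set of canonical classes $K$ with $K\cdot A_0=0$, so that $K_\tau$ can indeed be brought to $K_{st}$ without disturbing $A_0$. For $A_0=E_1-E_2$ this is the reflection bookkeeping of Lemma \ref{l:FinalRed}: reflections and sign changes supported on the $E_i$ with $i\ge 3$, together with the reflection exchanging $E_1$ and $E_2$, fix $E_1-E_2$ and suffice to move any admissible $K$ to $K_{st}$; the characteristic case is handled analogously using the reflections along the $(-2)$-classes $H-E_i-E_j-E_k$ supplied by Lemma \ref{trans}. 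Everything else in the argument is either the explicit local blow-up construction or a direct appeal to the $\kappa=-\infty$ stability already established, so once this orbit computation is in place the proof is complete.
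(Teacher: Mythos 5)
Your proof follows essentially the same route as the paper's: reduce to the standard models of Proposition \ref{2-standard}, realize them explicitly by blow-ups for a form with canonical class $K_{st}$, and transport to the given form using the $\kappa=-\infty$ machinery (the paper packages this last step as a one-line appeal to Lemma \ref{l:3.11}, while you unfold it into Lemma \ref{trans} plus Corollary \ref{irrcor}). The stabilizer computation you flag as ``the hard part'' is precisely the point the paper's citation glosses over --- Lemma \ref{l:3.11} as stated only yields a surface \emph{diffeomorphic} to $V$, not one in the prescribed class $A$ --- so your explicit bookkeeping of a $D(M)$-element fixing $A_0$ while normalizing the canonical class is, if anything, the more careful version of the same argument.
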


\begin{proof}  Assume that $A$ is represented by a smooth sphere.  Then $A$ is $D(M)$ - equivalent to one of the classes in
Proposition \ref{2-standard}.

There exists a symplectic form $\tau$ with $K_\tau=K_{st}$ such that the classes from Prop. \ref{2-standard} are represented by a $\tau$-symplectic  surface.  This can be obtained through an
appropriate blow-up from $\mathbb CP^2$.

The result now follows from Lemma \ref{l:3.11}.

\end{proof}

\subsubsection{Spheres with Self-Intersection $-3$}

We proceed as in the $-4$-case.

\begin{lemma}
Let $M=\mathbb CP^2\#k\overline{\mathbb CP^2}$ with $k\ge 1$ and $A\in H_2(M,\mathbb Z)$.  Assume that $A\cdot A=-3$.  Then, up to $D(M)$-equivalence, $A$ is a reduced class, $-H+2E_1$  or $H-E_1-..-E_4$.
\end{lemma}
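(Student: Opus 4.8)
The plan is to follow the same three-regime strategy used for the $-4$ case in Lemma \ref{l:hom}, splitting according to the number of blow-ups $k$ and reducing everything to the standard form of Remark \ref{r:algorithm}. First I would dispose of $k=1$ directly: writing $A=aH-b_1E_1$, the equation $A\cdot A=a^2-b_1^2=-3$ gives $(b_1-a)(b_1+a)=3$, whose only solution with $b_1\ge0$ is $b_1=2$, $a=\pm1$. Reflecting along $H$ and then $E_1$ (both induced by diffeomorphisms, see Lemma \ref{trans}) brings $A$ to $-H+2E_1$.

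For $k=2$ I would invoke Lemma~1 of \cite{LL4} to assume, after a $D(M)$-move, that $2a\le b_1+b_2$. Combining $a^2=b_1^2+b_2^2-3$ with $4a^2\le(b_1+b_2)^2$ yields
\[
3b_1^2-2b_1b_2+3b_2^2\le12,
\]
hence $b_1^2+b_2^2\le6$; together with $b_1^2+b_2^2\ge3$ and the requirement that $a^2=b_1^2+b_2^2-3$ be a perfect square, the only admissible ordered pair is $(b_1,b_2)=(2,0)$, giving $H-2E_1$, again $D(M)$-equivalent to $-H+2E_1$.

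The substantive case is $k\ge3$. Here I would apply the reflection reduction of \cite[Lemma 3.4]{LL2} along the $(-2)$-classes $H-E_i-E_j-E_l$ to assume that $A$ is either reduced (in which case we are done) or, after arranging $a\ge0$ and $b_1\ge b_2\ge\cdots\ge b_k\ge0$, satisfies
\[
b_1^2+b_2^2+b_3^2-3\le a^2\le\tfrac{3}{4}(b_1^2+b_2^2+b_3^2),
\]
where the lower bound is automatic from $A\cdot A=-3$ and the upper bound is the content of the reduction. These inequalities force $b_1^2+b_2^2+b_3^2\le12$ and $a\le3$, so the Diophantine equation $\sum_i b_i^2=a^2+3$ has only finitely many solutions; enumerating them case by case in $a$ I expect to obtain exactly $-E_1-E_2-E_3$, $H-2E_1$, $H-E_1-E_2-E_3-E_4$, $2H-2E_1-E_2-E_3-E_4$ and $3H-2E_1-2E_2-2E_3$.

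It remains to distribute these five classes into $D(M)$-orbits, and this is the step I expect to be the main obstacle, since the outcome depends on $k$. The central computation is that the Cremona reflection along $H-E_1-E_2-E_3$ (which lies in $D(M)$ by Lemma \ref{trans}) carries $H-2E_1$ to $-E_1+E_2+E_3$, hence to $-E_1-E_2-E_3$ after sign reflections; analogous reflections along $H-E_i-E_j-E_l$ identify $2H-2E_1-E_2-E_3-E_4$ and $3H-2E_1-2E_2-2E_3$ with this same orbit. Thus all four of $-E_1-E_2-E_3$, $H-2E_1$, $2H-2E_1-E_2-E_3-E_4$ and $3H-2E_1-2E_2-2E_3$ are $D(M)$-equivalent to $-H+2E_1$. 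The remaining class $H-E_1-E_2-E_3-E_4$ forms its own orbit when $k=4$: it is then characteristic while the $-H+2E_1$ orbit never is, so the two are genuinely distinct and both must be listed. For $k\ge5$, however, a further reflection along $H-E_1-E_4-E_5$ (using an exceptional class of index $\ge5$) merges $H-E_1-E_2-E_3-E_4$ back into the $-H+2E_1$ orbit, consistent with the listed classes coinciding for large $k$. In every case each non-reduced class is $D(M)$-equivalent to $-H+2E_1$ or to $H-E_1-E_2-E_3-E_4$, which completes the proof.
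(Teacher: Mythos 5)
Your proposal is correct and follows essentially the same route as the paper: the $k=1$ case by direct factorization, the $k=2$ case via Lemma~1 of \cite{LL4}, and the $k\ge3$ case via the reduction of Lemma~3.4 of \cite{LL2} to the inequality $b_1^2+b_2^2+b_3^2-3\le a^2\le\tfrac{3}{4}(b_1^2+b_2^2+b_3^2)$, yielding exactly the same five non-reduced solutions and the same orbit structure (with $H-E_1-\cdots-E_4$ separate only for $k=4$). The only difference is that you spell out the Cremona reflections identifying the orbits, which the paper leaves implicit.
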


\begin{proof}For $k=1$, $a^2-b_1^2=-3$ allows only for $\pm H\pm 2E_1$.

For $k=2$, again Lemma 1, \cite{LL4}, reduces the problem to classes with $2a\le b_1+b_2$.  This produces no further classes beyond the one above.

For $k\ge 3$, as in Lemma 3.4, \cite{LL2}, it can be shown using reflections along $-2$-spheres $H-E_i-E_j-E_k$ that either $A$ is reduced or
\[
b_1^2+b_2^2+b_3^2-3\le a^2\le \frac{3}{4}(b_1^2+b_2^2+b_3^2).
\]
In addition to this inequality, $a^2-\sum b_i^2=-3$ and $b_1\ge b_2\ge ...\ge b_k\ge 0$.  The solutions to this system, written in short as $(a,b_1,b_2,..,)$, are:
\[
(0,1,1,1,0..), (1,1,1,1,1,0,..),  (1,2,0,..),
\]
\[
(2,2,1,1,1,0,..), (3,2,2,2,0,..).
\]
Under the $D(M)$-action,  $(1,1,1,1,1,0,..)$ is in a class of its own when $k=4$. The other classes are all equivalent, and when $k\ge 5$, $(1,1,1,1,1,0,..)$ is included as well.

\end{proof}

\begin{theorem}\label{t:-3smooth}Let $M=\mathbb CP^2\#k\overline{\mathbb CP^2}$ and $k\ge 10$.  Then there exists no reduced class $A\in H_2(M,\mathbb Z)$ with $A\cdot A=-3$  and $K_{st}\cdot A\le 1$.
\end{theorem}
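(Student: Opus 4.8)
The plan is to follow the strategy of Proposition \ref{t:-4smooth} in the case $\tau=1$, using the version of the rearrangement Lemma \ref{l:reduce} valid for $A\cdot A=-3$ recorded in the Remark after that lemma. Write $A=aH-\sum_{i=1}^k b_iE_i$ and set $d:=K_{st}\cdot A=-3a+\sum_i b_i$, so that the hypotheses read $3a=\sum_i b_i-d$, $a^2=\sum_i b_i^2-3$, $a\ge b_1+b_2+b_3$, $b_1\ge\cdots\ge b_k\ge0$ and $d\le1$; these are precisely \eqref{e:3a}, \eqref{e:a2} (with $4$ replaced by $3$), \eqref{e:aineq}, \eqref{e:bconstraints}. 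Suppose for contradiction that such a reduced class exists.

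First I would reduce to the case $\min_i b_i\ge1$. If $b_k=0$, then deleting all vanishing coefficients produces a reduced class of square $-3$ with all coefficients $\ge1$ on $\mathbb{CP}^2\#k'\overline{\mathbb{CP}^2}$ for some $k'\le k$, and with the same value of $d$ (deleting a zero coefficient changes neither $\sum_i b_i$ nor the pairing against the standard canonical class). If $k'\le9$, then re-introducing $9-k'$ zero coefficients exhibits this as a reduced $-3$ class on $\mathbb{CP}^2\#9\overline{\mathbb{CP}^2}$, which is impossible since — as already invoked in the proof of Lemma \ref{l:b=1} — every reduced class there has non-negative square. Hence $k'\ge10$, and we may assume from the outset that $b_i\ge1$ for all $i$ and $k\ge10$; in particular we may take $\tau=1$ in \eqref{e:bconstraints}.

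Next I would put the class into normal form. By Lemma \ref{l:reduce}, extended to $A\cdot A=-3$ with $\tau=1$, $k\ge10$, $d\le1$ as in the Remark following it, there is a real solution of \eqref{e:3a}, of \eqref{e:a2} with $4$ replaced by $3$, and of \eqref{e:aeq}--\eqref{e:b'}. Dropping primes, I may thus assume $a=b_1+2B$, $b_2=\cdots=b_{r+3}=B$, $b_{r+4}=b$ with $1\le b\le B$, and $b_{r+5}=\cdots=b_k=1$, for some $0\le r\le k-4$. Substituting $a=b_1+2B$ into the two defining equations and eliminating $b_1$ via the analog of \eqref{e:2b1} yields, exactly as in the derivation of \eqref{e:11}, the single relation
\[
0=(r-6)B^2+2Bb-b^2+(k-4-r-d)(2B-1)+3-d,
\]
subject to the feasibility constraint $2b_1-(r-4)B-b=k-4-r-d\ge0$; the trailing constant is $3$ rather than $4$ because $A\cdot A=-3$.

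Finally I would run the case analysis over $r$, following verbatim the $\tau=1$ portion of the proof of Proposition \ref{t:-4smooth}. For $6\le r\le k-6$ both $(r-6)B^2$ and $(k-4-r-d)(2B-1)$ are non-negative (using $d\le1$, so $k-4-r-d\ge1$ and $2B-1\ge1$), while $2Bb-b^2=b(2B-b)>0$ and $3-d\ge2>0$, so no solution exists; the boundary values $r=k-5$ and $r=k-4$, the small-$k$ exceptions such as $r=6$ with $k=10$, and the low range $0\le r\le5$ (treated through the analog of \eqref{e:simp}) are each rewritten as a sum of squares plus a strictly positive constant. The one place where the $-4$ argument required $k\ge11$ — the value $d=2$ in the subcases $r=5$ and $r=k-4$ — disappears here, since $d\le1$ both keeps the effective constant $3-d\ge2$ positive and forces the feasibility inequality $k\ge9+d$ to hold already at $k\ge10$. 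The main obstacle is therefore purely bookkeeping: verifying that, with $4$ replaced by $3$ and $d\le2$ replaced by $d\le1$, each of the roughly ten subcases still closes. I expect all of them to close with strictly more room than in the $-4$ case, so that $\min_i b_i\ge1$, $k\ge10$ and $d\le1$ admit no real — hence no integral — solution, completing the proof.
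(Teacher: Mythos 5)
Your proposal is correct and follows essentially the same route as the paper, whose proof of this theorem is literally to repeat the $\tau=1$ case of Proposition \ref{t:-4smooth} with $4$ replaced by $3$ and $d\le 2$ by $d\le 1$, exactly as you do via the Remark following Lemma \ref{l:reduce}. Your preliminary reduction to $\min_i b_i\ge 1$ (by deleting zero coefficients and using that reduced classes on $\mathbb{CP}^2\#9\overline{\mathbb{CP}^2}$ have non-negative square) is a step the paper leaves implicit but which is needed for the statement as written, and the remaining case-by-case bookkeeping does indeed close with more room than in the $-4$ case.
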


\begin{proof}
This result follows by repeating the proof of Theorem \ref{t:-4smooth} in the $\tau=1$ case.  This involves no new methods.
\end{proof}

For $k\ge 3$, $-H+2E_1$ is equivalent to $E_1-E_2-E_3$.  This class
is represented by a symplectic sphere.

Consider a symplectic representative $V$ of  $-H+2E_1$ in $\mathbb
CP^2\#3\overline{\mathbb CP^2}$.  Blow up one point of this
representative to obtain a symplectic $-4$-sphere $Z$ in $\mathbb
CP^2\#4\overline{\mathbb CP^2}$ in the class $-H+2E_1-E_4$.  By Cor
3.3 in \cite{BLW} there exist 3 disjoint exceptional spheres from
$Z$.  In particular, two of these exceptional spheres are just $E_2$
and $E_3$, both of which are not affected by blowing down $E_4$.
Thus after blowing down $E_4$, we re-obtain $V$ as a symplectic
manifold, but also obtain two pairwise disjoint exceptional spheres
which are furthermore disjoint from $V$.  Hence blowing down $E_2$
and $E_3$ leaves $V$ unchanged, thus providing for a symplectic
$-3$-sphere in $\mathbb CP^2\#k\overline{\mathbb CP^2}$ for $k=1,2$.
Alternatively, the  class  $-H+2E_1$ can be related to a Hirzebruch
surface; it can be viewed as a section in the non-trivial $S^2$
bundle over $S^2$.  This is symplectic for an appropriate choice of
symplectic form.



We have the analogues of the results in the $-2$ and $-4$ case.

%
%
%

\begin{theorem}\label{3-standard}
Let $(M,\omega)$ be a rational symplectic manifold and  $A$  a homology
class with $A^2=-3$.

  Then $A$  is represented by a $\omega$-symplectic sphere if and only if\begin{enumerate}
\item $g_\omega(A)=0$,
\item $[\omega]\cdot A>0$ and
\item  $A$  is represented by a smooth sphere
\end{enumerate}

Moreover,   when $b^-(M)\ne 4$, $V$ can be chosen to be the blow-up of an exceptional sphere.
\end{theorem}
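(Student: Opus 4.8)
The plan is to follow the template of the $-2$- and $-4$-cases, proving the two implications separately and reducing the existence half to standard representatives by the same bookkeeping used in Lemma \ref{l:FinalRed}.

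\emph{Necessity of (1)--(3).} Suppose $A$ carries a connected $\omega$-symplectic sphere $V$. Then $[\omega]\cdot A=\int_V\omega>0$, which is (2), and $V$ is in particular a smoothly embedded sphere, which is (3). By the adjunction equality the genus of $V$ equals $\tfrac12(K_\omega\cdot A+A\cdot A)+1=g_\omega(A)$; since $V$ has genus $0$ this forces $g_\omega(A)=0$, which is (1). Note that with $A\cdot A=-3$ condition (1) is equivalent to $K_\omega\cdot A=1$.

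\emph{Sufficiency: reduction to standard classes.} Assume (1)--(3). By (3) and the homology classification for $-3$ classes proved above, $A$ is $D(M)$-equivalent to a reduced class, to $-H+2E_1$, or to $H-E_1-\dots-E_4$. A reduced class with $A\cdot A=-3$ forces $\sum_{i\ge4}b_i^2\ge3$ (from $a\ge b_1+b_2+b_3$), and hence $k\ge10$; moreover any reduced class carrying a smooth sphere satisfies $K_{st}\cdot A\le -2-A\cdot A=1$ by Lemma \ref{l:Kbound}, so Theorem \ref{t:-3smooth} excludes it. Thus $A$ is $D(M)$-equivalent to $-H+2E_1$ (equivalently $E_1-E_2-E_3$ for $k\ge3$) or to $H-E_1-\dots-E_4$. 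Each is realized by an explicit symplectic sphere for a form $\omega'$ with $K_{\omega'}=K_{st}$: the class $H-E_1-\dots-E_4$ is the proper transform of a line through four blown-up points, while $-H+2E_1$ is a section of the nontrivial $S^2$-bundle over $S^2$. Unlike the $-4$-case there are no obstructed ``doubled'' classes to discard, since $-3$ is odd and hence not of the form $4B\cdot B$.

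\emph{The main obstacle: simultaneous normalization and transport.} After applying a diffeomorphism so that $A$ \emph{equals} the chosen standard class, I must show the given $K_\omega$ can be moved to $K_{st}$ by elements of $D(M)$ that \emph{fix} $A$. This is exactly the computation underlying Lemma \ref{l:FinalRed}: since $D(M)$ acts transitively on canonical classes (Lemma \ref{trans}), every canonical class has the shape $\pm3H+\sum\pm E_i$ up to reordering, and condition (1) in the form $K_\omega\cdot A=1$ pins down the coefficients on the $E_i$ meeting $A$, leaving only sign changes on the remaining $E_i$ --- trivial transforms that fix $A$ and the sign of $[\omega]\cdot A$, precisely as in the type (1) reduction for the $-4$ theorem. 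Hence we may assume $K_\omega=K_{st}=K_{\omega'}$. For $\kappa(M)=-\infty$ any two forms with a common canonical class are deformation equivalent \cite{MP94}, so $\omega$ and $\omega'$ are deformation equivalent; as $[\omega]\cdot A>0$ makes the single-vertex configuration $\{A\}$ positive, Corollary \ref{irrcor} (equivalently Lemma \ref{l:3.11}) yields an $\omega$-symplectic sphere in the class $A$ itself, smoothly isotopic to the standard representative. I expect this normalization to be the delicate step, since it is exactly here that one must keep the representative in the class $A$ rather than in a merely diffeomorphic class.

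\emph{The moreover clause.} The non-characteristic standard representatives are blow-ups of exceptional spheres, as $(E_1-E_2-E_3)+E_2+E_3=E_1$ and $(H-E_1-\dots-E_4)+E_3+E_4=H-E_1-E_2$ are both exceptional. The class $H-E_1-\dots-E_4$ is characteristic exactly when $k=4$, i.e. $b^-(M)=4$, whereas $-H+2E_1$ is never characteristic; so outside $b^-(M)=4$ the representative may be taken to be such a blow-up of an exceptional sphere, which is the content of the final assertion.
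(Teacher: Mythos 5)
Your proof is correct and follows the route the paper intends for this theorem (the paper writes no proof here, deferring to ``the analogues of the $-2$ and $-4$ cases''): exclude reduced classes via Lemma \ref{l:Kbound} and Theorem \ref{t:-3smooth}, exhibit standard representatives of $-H+2E_1$ and $H-E_1-\dots-E_4$ for a form with canonical class $K_{st}$, and transport to the given $\omega$ by deformation equivalence \cite{MP94} and the stability results. In fact you are more careful than the paper's $-2$-case template at the one delicate point: the paper simply cites Lemma \ref{l:3.11}, which a priori returns a surface only in the class $f_*(A_0)$ for some diffeomorphism $f$, whereas you carry out the simultaneous normalization of $A$ and $K_\omega$ in the style of Lemma \ref{l:FinalRed}, which is what is actually needed to land in the class $A$ itself. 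Two small imprecisions are worth noting. First, ``$\sum_{i\ge4}b_i^2\ge3$ \dots\ hence $k\ge10$'' is a non sequitur as written; the fact you need (and which the paper also asserts) is that every reduced class in $\CP^2\#k\overline{\CP^2}$ with $k\le9$ has non-negative square, which follows from $a^2\ge(b_1+b_2+b_3)^2\ge\sum_{i=1}^9b_i^2$, using $\sum_{i=4}^9b_i^2\le6b_3^2\le2(b_1b_2+b_1b_3+b_2b_3)$. Second, for $A_0=H-E_1-\dots-E_4$ the condition $K_\omega\cdot A=1$ admits a second family $K=3H-E_1-E_2-E_3+E_4+\sum_{i\ge5}\pm E_i$, so after the trivial transforms one still needs the reflection along $H-E_1-E_2-E_3$, i.e.\ the type (2) rather than type (1) reduction of Section \ref{s:Con-4}; your appeal to the computation underlying Lemma \ref{l:FinalRed} does cover this, but the parenthetical claim that $K_\omega\cdot A=1$ ``pins down the coefficients on the $E_i$ meeting $A$, leaving only sign changes on the remaining $E_i$'' is accurate only for the class $-H+2E_1$. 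Neither point affects the validity of the argument.
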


\subsection{Discussions}

We conclude this section on spheres in rational surfaces by indicating some possible
directions extending further our results.

\subsubsection{Spheres with large negative square}

The tilted transport is used to prove that the classes of type (3)
in Theorem \ref{t:-4dmequiv} are representable by symplectic
spheres.  While the focus of the previous section was to construct
$(-4)$-spheres, the results also prove the existence of highly
singular curves in $\mathbb CP^2$, as we will explain.

Using Lemma \ref{rel exceptional}, there exist exceptional spheres
in the classes $E_i$ which intersect the $(-4)$-sphere $V_a$ of type
(3) locally positively and transversally and which can be blown down
to produce a point of $a$-fold intersection.  Doing this for all ten
exceptional spheres $E_1,..,E_{10}$ produces a curve $C\subset
\mathbb CP^2$ in the class $3aH$ with one nodal point and 9 points
of a-fold self-intersection.

Applying Prop. 3.3, \cite{LU} to one of the a-fold intersections, one
can successively perturb away intersection components to make an
a-fold intersection into a ($a-1$)-fold intersection and ($a-1$) double
points.  Repeating this will produce singular curves with differing
combinations of intersections.  Blowing up at the intersection points will produce
an embedded sphere in some rational manifold.  This result is
summarized in the following lemma.

To each a-fold singular point, associate the value $k_i$ ($1\le i\le 9$), $0\le k_i\le a-2$ and $a>2$, describing the number of curves which have been perturbed out of the singular point.  Let
\[
N_i=k_i(a-\frac{k_i}{2}-\frac{1}{2})
\]
and $m=10+\sum_{i=1}^9N_i$.

\begin{lemma}
In $\mathbb CP^2\# m\overline{\mathbb CP^2}$ the class \[A=3aH-\sum_{i=1}^9(a-k_i)E_i-2E_{10}-2\sum_{i=1}^{\sum N_i}E_i\] is represented by an embedded connected symplectic sphere.
\end{lemma}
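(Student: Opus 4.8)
The plan is to realize $A$ by starting from the symplectic $(-4)$-sphere $V_a$ of type (3) furnished by Lemma \ref{l:reduction2b}, pushing it down to a singular rational curve in $\CP^2$, redistributing its singularities by the perturbation of \cite{LU}, and finally blowing everything up again. First I would invoke Lemma \ref{rel exceptional} to produce, in each class $E_i$ ($1\le i\le 10$), an exceptional sphere meeting $V_a$ transversally and positively; since $E_i\cdot E_j=0$ for $i\neq j$, a generic choice in $\cJ_{V_a}$ makes these ten spheres pairwise disjoint, so they may be blown down simultaneously and symplectically. Because $V_a\cdot E_i=a$ for $i\le 9$ and $V_a\cdot E_{10}=2$, and the intersections are transverse, collapsing $E_i$ identifies the $a$ (resp.\ $2$) local branches of $V_a$ at distinct points with distinct tangent directions, so each image is an ordinary multiple point. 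This yields a connected symplectic curve $C\subset\CP^2$ of degree $3a$ carrying nine ordinary $a$-fold points and a single node; its normalization is the sphere $V_a$, hence $C$ is rational.

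Next I would apply Prop.\ 3.3 of \cite{LU} at each of the nine $a$-fold points. Perturbing one local branch off an ordinary $\mu$-fold point produces an ordinary $(\mu-1)$-fold point together with $\mu-1$ transverse nodes where the displaced branch crosses the remaining ones; iterating this $k_i$ times at the $i$-th point leaves an ordinary $(a-k_i)$-fold point and creates $\sum_{j=1}^{k_i}(a-j)=N_i$ new nodes, all kept mutually disjoint and all intersections positive and transverse by \cite{LU}, \cite{G}. The normalization is unchanged throughout, so the curve stays rational; as a bookkeeping check, the delta invariants balance, $\binom{a}{2}=\binom{a-k_i}{2}+N_i$.

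Finally I would blow up symplectically at every singular point: once at each of the nine $(a-k_i)$-fold points (a single blow-up separates the branches of an ordinary multiple point), once at the original node, and once at each of the $\sum_i N_i$ new nodes. A blow-up at an ordinary $\mu$-fold point subtracts $\mu E$ from the class and renders the proper transform embedded near that point, so the total subtraction is $\sum_{i=1}^9(a-k_i)E_i+2E_{10}+2\sum_{i=1}^{\sum N_i}E_i$, giving exactly the class $A$ in $\CP^2\#m\overline{\CP^2}$ with $m=10+\sum_i N_i$. The resulting surface is diffeomorphic to the normalization $V_a$, hence an embedded connected symplectic sphere in the class $A$.

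I expect the main obstacle to be the middle step: guaranteeing that the branch-splitting perturbations of \cite{LU} can be arranged so that the curve remains globally symplectic and embedded away from the prescribed singular set, that the freshly created nodes are ordinary (transverse and positive) and pairwise disjoint, and that the nine $(a-k_i)$-fold points stay ordinary so that one blow-up resolves each. These local-to-global symplectic adjustments, rather than the homological class computation, are where care is needed, and they are precisely what the perturbation results of \cite{LU} together with Gompf's local model \cite{G} are designed to supply.
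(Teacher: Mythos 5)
Your proposal follows essentially the same route as the paper: use Lemma \ref{rel exceptional} to find exceptional spheres in the classes $E_1,\dots,E_{10}$ meeting the type (3) sphere $V_a$ positively and transversally, blow them all down to obtain a degree-$3a$ rational curve in $\CP^2$ with one node and nine ordinary $a$-fold points, redistribute each $a$-fold point into an $(a-k_i)$-fold point plus $N_i$ nodes via Prop.~3.3 of \cite{LU}, and blow up at all singular points to land in the stated class. Your bookkeeping ($N_i=\sum_{j=1}^{k_i}(a-j)$, the total subtraction of $\sum_i(a-k_i)E_i+2E_{10}+2\sum E_i$) agrees with the paper's, and the extra details you supply (pairwise disjointness of the $E_i$-spheres, ordinariness of the multiple points) are consistent with what the paper leaves implicit.
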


Then
\[
A\cdot A=\sum_{i=1}^9[k_i^2-2k_i(a-1)]-4.
\]
This allows us to note the following interesting examples.
\begin{enumerate}
\item Consider the two classes
\[
A_1=12H-4\sum_{i=1}^6E_i-3(E_7+E_8+E_9)-2E_{10}-2\sum_{i=11}^{19}E_i-E_{20}
\]
corresponding to $a=4$, $k_1=k_2=k_3=1$, $k_i=0$ otherwise and then one point blown up and
\[
A_2=12H-4\sum_{i=1}^7-2(E_8+E_9)-2E_{10}-2\sum_{i=11}^{20}E_i
\]
corresponding to $a=4$, $k_1=k_2=2$, and $k_i=0$ otherwise.  Both
classes can be represented by embedded symplectic spheres of
self-intersection $-20$ in $\mathbb CP^2\#20\overline{\mathbb
CP^2}$.  Note that both classes are reduced, by the uniqueness of
reduced form \cite{LL2}, they are not $D(M)$-equivalent.

\item It is very simple to construct spheres with
self-intersection $-l$ in $\mathbb CP^2\# m\overline{\mathbb CP^2}$
for some $l>m$.  For example, the curve
\[
A_3=9H-2\sum_{i=1}^{28}E_i
\]
corresponding to $a=3$ and $k_i=1$ has $A_3\cdot A_3=-31$ and lies in $\mathbb CP^2\# 28\overline{\mathbb CP^2}$  or
\[
A_4=12H-4\sum_{i=1}^5E_i-3(E_6+E_7+E_8+E_9)-2E_{10}-2\sum_{i=11}^{22}E_i
\]
corresponding to $a=4$, $k_1=..=k_4=1$ and $k_i=0$ otherwise has $A_4\cdot A_4=-24$ and lies in $\mathbb CP^2\# 22\overline{\mathbb CP^2}$.  Compare this with the lower bound for spheres in irrational ruled manifolds obtained in Lemma \ref{irr}.

\end{enumerate}


\subsubsection{A local variant of tilted transport and
symplectic circle sum}

We explain next how to use a rather simple case of tilted transport
to partly recover the circle sum construction in symplectic
geometry. Note the corresponding counterpart is well-known in the
smooth category.

The setting under consideration is a pair of disjoint symplectic
surfaces $S_0, S_1\subset (M^4,\w)$.  Suppose one has an open set
$U\subset M$ so that $U\cong S^1\times [-1,1]\times D^2(2)$, a
trivial bundle over $D^2$ with annulus fibers, while $S_i\cap
U=S^1\times [-1,1]\times \{i\}$.  We claim that there is an embedded
symplectic surface $S$ which is the circle sum of $S_0$ and $S_1$.

The question is local so we concentrate on the trivial bundle $U$.
Remove the part $S^1\times [-\frac{1}{2},\frac{1}{2}]$ from the
fibers $U_0$ and $U_1$.  Consider two embedded arcs $\gamma(t)$ and
$\bar\gamma(t)$, which only intersect at
$\gamma(0)=\bar\gamma(0)=0$ and $\gamma(1)=\bar\gamma(1)=1$.  By
choosing $W$ appropriately on $U_\gamma$, one easily constructs a
tilted transport which concatenates $S^1\times
[-1,-\frac{1}{2}]\times \{0\}$ with $S^1\times [\frac{1}{2},1]\times
\{1\}$.  Similarly one concatenates $S^1\times
[-1,-\frac{1}{2}]\times \{1\}$ with $S^1\times [\frac{1}{2},1]\times
\{0\}$ by choosing another tilted transport on $\bar\gamma$.  This
realizes the circle sum as claimed.

As immediate consequence of the construction, by taking a finite
number of nearby copies of generic fibers in an arbitrary Lefschetz
fibration of dimension $4$, one realizes $n[F]$ as an embedded
symplectic surface by performing symplectic circle sums on two
consecutive copies.  This construction clearly generalizes to
higher dimensions in appropriately formulated cases, which is left
to interested readers.  This particular case applied to situations
in Section \ref{s:Con-4} yields an alternative proof for Lemma
\ref{l:reduction2b}.

\section{Spheres in irrational ruled manifolds}\label{class}



%

Let $M$ be an irrational ruled symplectic manifold.  Then the
minimal model of $M$ is an $S^2$-bundle over a surface $\Sigma_h$
with $h\ge 1$.  Recall that, when $M$ is minimal, there can be no
negative symplectic spheres from adjunction (see also the proof of
Lemma \ref{irr}), so there is nothing to investigate.  In the
non-minimal case, the blow-up of the trivial bundle and the blow-up
of the non-trivial bundle are diffeomorphic, we fix a standard
representation: Let $M=(\Sigma_h\times S^2)\#k\overline{\mathbb
CP^2}$.  Denote by $\{S,F,E_1,..,E_k\}$ the standard basis of $M$,
where $S$ denotes the class of a surface of genus $h$ and $F$ is the
fiber class. Denote the standard canonical class
$K_{st}=-2S+(2h-2)F+\sum E_i$.

\subsection{\label{irrat}Smooth spheres}

\begin{lem} \label{irr} Let $M$ be an irrational ruled manifold with $b^-(M)=k$, and   $A\in H_2(M,\mathbb Z)$ is a class with  $A^2=-l$ with $l\geq 1$.  Assume that $A$ satisfies the following:
\begin{enumerate}
\item $A$ is represented by a smoothly embedded sphere;

\item $A\cdot A =-l$;
\item  $A\cdot [\omega]>0$ for some symplectic form  $\omega$ with $K_{\omega}=K_{st}$ and

\item  $g_{K_{st}}(A)\ge 0$.

\end{enumerate}
Then   $$l\leq k.$$  Moreover, up to permutations of $E_i$,
\begin{equation} \label{A} A=bF+\sum_{i=1}^{1-b} E_i -\sum_{j=2-b}^l E_j,
\end{equation}
for some $b$ with $b\leq 1$.

\end{lem}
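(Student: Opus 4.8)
The plan is to fix the standard basis and write $A = pS + qF + \sum_i c_i E_i$, then determine $p$, $q$ and the coefficients $c_i$ one at a time from the four hypotheses. First I would record the genus identity. Because $A$ is represented by a smooth sphere its minimal genus is $0$, and hypothesis (3) shows $K_{st}\in\mathcal K_A$; hence by the inequality ``symplectic genus $\le$ minimal genus'' (Lemma 3.2, \cite{LL2}) we get $g_{K_{st}}(A)\le\eta(A)\le 0$, while hypothesis (4) gives $g_{K_{st}}(A)\ge 0$. Thus $g_{K_{st}}(A)=0$, which is equivalent to $K_{st}\cdot A = -2-A\cdot A = l-2$.

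The next step, and the only place where $h\ge 1$ is used, is to show $p = A\cdot F = 0$. The ruling of the minimal model together with the blow-downs defines a smooth map $\pi\colon M\to\Sigma_h$ carrying $S$ to the base and collapsing $F$ and each $E_i$, so that $\pi_*[A]=p\,[\Sigma_h]$. Precomposing with a smooth embedding $S^2\hookrightarrow M$ representing $A$ gives a map $S^2\to\Sigma_h$; since $h\ge 1$ we have $\pi_2(\Sigma_h)=0$, so this map is null-homotopic and acts by zero on $H_2$. Therefore $p\,[\Sigma_h]=0$, forcing $p=0$. With $p=0$ the self-intersection and genus identities become $\sum_i c_i^2 = l$ and, using $K_{st}\cdot F=-2$ and $K_{st}\cdot E_i=-1$, $2q+\sum_i c_i = 2-l$.

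The crux --- and the step I expect to be the main obstacle --- is the bound $|c_i|\le 1$, which is where positivity enters. The geometric input I would invoke is that each $F-E_i$ is an exceptional class (it is represented by the proper transform of a fiber after blowing up a small ball, and exceptional classes are invariants of the deformation type, which by $\kappa=-\infty$ is fixed by $K_\omega=K_{st}$; see \cite{TJLL2}). Consequently every $\omega$ with $K_\omega=K_{st}$ satisfies $0<[\omega]\cdot E_i<[\omega]\cdot F$. Writing $\mu=[\omega]\cdot F>0$ and $t_i=[\omega]\cdot E_i/\mu\in(0,1)$, the sign of $A\cdot[\omega]$ equals the sign of $q+\sum_i c_i t_i$, whose supremum over $t\in(0,1)^k$ is $q+\sum_{c_i>0}c_i$. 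Hypothesis (3) therefore forces $q+\sum_{c_i>0}c_i>0$. Substituting $q = 1-\tfrac12\sum_i c_i(c_i+1)$ from the genus relation turns this quantity into
\[
1+\sum_{c_i>0}\tfrac12 c_i(1-c_i)-\sum_{c_i<0}\tfrac12|c_i|(|c_i|-1),
\]
in which every summand is $\le 0$ and any index with $|c_i|\ge 2$ contributes $\le -1$; such an index would make the expression $\le 0$, a contradiction. Hence $c_i\in\{0,\pm1\}$ for all $i$.

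Finally I would read off the normal form. Since $\sum_i c_i^2 = l$ and each $c_i\in\{0,\pm1\}$, exactly $l$ of the $c_i$ are nonzero, so $l\le k$. Letting $n_+,n_-$ count the $+1$'s and $-1$'s, the relations $n_++n_-=l$ and $-2q-(n_+-n_-)=l-2$ give $n_+=1-q$ and $n_-=l-1+q$; setting $b=q$ and using $n_+\ge 0$ yields $b\le 1$, and permuting the $E_i$ so the $+1$'s come first produces exactly $A=bF+\sum_{i=1}^{1-b}E_i-\sum_{j=2-b}^{l}E_j$. The one point requiring care is the exceptional-class input in the third paragraph; the rest is bookkeeping with the two constraints.
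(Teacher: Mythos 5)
Your proposal is correct and follows essentially the same route as the paper: kill the $S$-coefficient via the null-homotopy of the projection to the genus $h\ge 1$ base, combine the adjunction/genus constraint with the positivity $[\omega]\cdot A>0$ and the area restrictions $0<[\omega]\cdot E_i<[\omega]\cdot F$ forced by the exceptional classes $E_i$ and $F-E_i$, deduce $|c_i|\le 1$, and then read off the normal form. The only (welcome) difference is organizational: your single substitution of the genus relation into $q+\sum_{c_i>0}c_i>0$ handles all signs of $b$ uniformly, where the paper's Lemma \ref{ci} splits into the cases $b\ge 0$ and $b<0$.
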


\begin{proof} Write the  class $A=aS+bF+\sum c_iE_i$.  Since the projection of any
smooth sphere  representing $A$ to the base in an irrational ruled
manifold is null homotopic, we must have $a=0$ from condition (1) of
Lemma \ref{irr}. This in particular shows that $S^2\times \Sigma_h$
and $S^2\tilde\times \Sigma_h$ admit no spheres of negative
self-intersection for $h\ge 1$.

Conditions (2) and (4) of Lemma \ref{irr} imply that
\[
\sum c_i^2=l\mbox{  and  }2b+\sum c_i\le 2-l
\]
which can be combined to give
\begin{equation}\label{2}
2b+\sum c_i(c_i+1)\le 2.
\end{equation}

\begin{lem}   \label{ci}Let $A=bF+\sum c_iE_i$ and  assume that $A$ satisfies the last 3 conditions of Lemma \ref{irr}.   Then $|c_i|\le 1$ for all $i$ .

\end{lem}

\begin{proof}
For $n=1$ it  is well known that $A= E_i$ or $F-E_i$ (see e.g
\cite{L1}, \cite{TJLL}). In fact, this classification provides
constraints on  symplectic forms with canonical class $K_{st}$.
  Suppose  $\omega$ is  a symplectic form with $K_{\omega}=K_{st}$ and
  $[\omega]=cS+dF+\sum e_iE_i$. Then since $E_i$ and $F-E_i$ are $\omega$-exceptional
  classes, we have

\begin{equation}\label  {symplectic form coeff}  c>|e_i|, \quad e_i<0.\end{equation}

{\bf Case 1:}  Assume that $b\ge 0$ and $|c_i|\ge 2$ for some $i$.
Each term on the left of \ref{2} is non-negative by assumption.  In
particular, $c_i(c_i+1)>2$ unless $c_1=-2$, $c_j=0$ for $j\ge 2$ and
$b=0$.  However, for symplectic forms $\omega$ satisfying
\eqref{symplectic form coeff}, the class $-2E_1$ is not
symplectically representable, i.e. $[\omega]\cdot(-2E_1)<0$.

{\bf Case 2:}  Assume that $b< 0$ and $|c_i|\ge 2$ for some $i$.
Without loss of generality let $c=1$.  As $[\omega]\cdot A>0$ we
have
\[
b-\sum e_ic_i>0\;\;\Rightarrow\;\;0>b>\sum e_ic_i.
\]
for some $c, e_i$ satisfying \eqref{symplectic form coeff}. In
particular, for some $i$ it must hold that $c_i>0$.  Then from
$|e_i|<1$,
\[
0<b-\sum e_ic_i<b+\sum_{c_i>0}c_i+\sum _{c_i<0}|c_i|e_i
\]
and thus in particular
\[
0<b+\sum_{c_i>0}c_i\mbox{  and  }0<b+\sum_{c_i>0}c_i^2.
\]
This can be used to rewrite \eqref{2} as

\begin{equation}\label{longesti}
2\ge \underbrace{b+\sum_{c_i>0}c_i}_{>0}+\underbrace{b+\sum_{c_i>0}c_i^2}_{>0}+
\underbrace{\sum_{c_i<0}(c_i^2+c_i)}_{\ge0}.
\end{equation}

If there exists an $i$ such that $c_i<-2$, then $c_i^2+c_i>2$.  For
$c_i=-2$ we must have $b=0$, this case has been considered
previously.

Thus we may assume that $c_i\ge -1$.  In this case, the last term in
\eqref{longesti} vanishes.  As the first two terms must be positive,
they must both be equal to 1.  Thus
\begin{equation} \label{b constraint}
b=1-\sum_{c_i>0}c_i=1-\sum_{c_i>0}c_i^2
\end{equation}
which can be rewritten as
\[
\sum_{c_i>0}(c_i^2-c_i)=0.
\]
This implies $c_i=1$ for those $i$ such that $c_i>0$.

We have thus shown that $c_i\in\{-1,0,1\}$ for all $i$.

\end{proof}

We now complete the proof of Lemma \ref{irr}.  The claim follows by
reindexing $\{E_i\}$ so that
$$(c_i)=(1,\cdots, 1, -1\cdots, -1, 0\cdots)$$
and applying \eqref{b constraint} to calculate $b$.
\end{proof}

The relation between $b$ and $c_i$ given by \ref{b constraint} leads
to the following three possibilities:
\begin{enumerate}
\item $b>0$: Then \ref{b constraint} implies that $b=1$ and
\[
A= (F-E_1)- \sum_{j=2}^l E_j.
\]
\item $b=0$: Then there is a unique index with $c_i=1$.  Thus
\[
A=E_1-\sum_{j=2}^lE_j.
\]
\item $b<0$: Rewrite $A$ as follows:
\[
A=-|b|F+\sum_{i=1}^{1-b} E_i -\sum_{j=2-b}^l
E_j=E_1-\sum_{i=2}^{1-b}(F-E_i)-\sum E_j.
\]
\end{enumerate}
Notice that in all cases, we can view $A$ as having the class of the
blow-up of an exceptional sphere.

\subsection{Symplectic spheres}

The stability results of Section \ref{nodal} now allow us to confirm
the existence of a symplectic curve for a choice of symplectic form.
In particular, we may fix a choice of symplectic canonical class.
This leads to:

\begin{prop}\label{irr-blowdown}
Given any symplectic form $\omega$ with $K_{\omega}=K_{st}$,  and a
class $A\in H_2(M,\mathbb Z)$ of the form \eqref{A} up to
permutations of $E_i$, $A$ is represented by $\omega-$symplectic
sphere $V$ if and only if $A\cdot [\omega]>0$.

\end{prop}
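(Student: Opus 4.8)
The forward implication is immediate: any $\omega$-symplectic sphere $V$ in class $A$ has positive area, so $A\cdot[\omega]=\int_V\omega>0$.

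For the converse my plan is to produce a single representative for one auxiliary symplectic form and then propagate it by the stability machinery of Section \ref{nodal}. Since $M$ is irrational ruled we have $\kappa(M)=-\infty$, so by \cite{MP94},\cite{TJLL2} any two symplectic forms sharing the canonical class $K_{st}$ are deformation equivalent. Hence, as soon as I exhibit one symplectic form $\omega_0$ with $K_{\omega_0}=K_{st}$ carrying an $\omega_0$-symplectic sphere in class $A$, Corollary \ref{irrcor}, applied to the homological configuration given by the single vertex labelled $A$, will yield a $\tilde\omega$-symplectic sphere in class $A$ for every $\tilde\omega$ with $K_{\tilde\omega}=K_{st}$ and $A\cdot[\tilde\omega]>0$, in particular for the given $\omega$. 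So it suffices to construct one such $\omega_0$ together with its representative.

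I would build this representative from the description of $A$ at the end of Section \ref{irrat} as the blow-up of an exceptional sphere. When $b=1$ I take the minimal model $\Sigma_h\times S^2$ with a product form, choose a fibre (a symplectic sphere in class $F$), and perform $l$ small symplectic blow-ups at distinct points of it, so that the proper transform is a symplectic sphere in class $F-E_1-\cdots-E_l=A$. When $b=0$ I instead start from an exceptional sphere in class $E_1$ and blow up $l-1$ of its points to realize $E_1-E_2-\cdots-E_l=A$. In both cases the remaining blow-ups are placed arbitrarily so that $K_{\omega_0}=K_{st}$, and $A\cdot[\omega_0]>0$ holds automatically since $A$ is then represented by a genuine symplectic submanifold.

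I expect the case $b<0$ to be the main obstacle, since there $A=E_1-\sum_{i=2}^{1-b}(F-E_i)-\sum_j E_j$ involves the classes $F-E_i$, which are not exceptional divisors of blow-ups in the standard basis, so the construction above does not literally produce $A$. The plan is to remove these terms by a $K_{st}$-preserving change of basis: the $(-2)$-class $F-E_1-E_2$ satisfies $K_{st}\cdot(F-E_1-E_2)=0$, so reflection along it fixes $K_{st}$ and, by Lemma \ref{trans}, is covered by a diffeomorphism $\phi\in D(M)$; one computes that it carries $A$ to a class of the same shape with $|b|$ decreased by two, so iterating lands in the settled cases $b\in\{0,1\}$. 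Pulling back $\omega$ by $\phi$, which fixes $K_{st}$ and preserves the positivity $A\cdot[\omega]>0$, then transports the conclusion back to $A$ and $\omega$. The genuinely delicate point is this bookkeeping: checking that the reflection reduction terminates and that the transporting diffeomorphism respects simultaneously the homology class and the area positivity.
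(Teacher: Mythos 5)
Your proposal is correct and follows essentially the same route as the paper: reduce $A$ to the standard classes $E_1-\cdots-E_i$ or $F-E_1-\cdots-E_j$ by $K_{st}$-preserving reflections along $F-E_1-E_2$ (your computation that each reflection raises $b$ by $2$ matches the paper's induction), realize those classes by iterated blow-ups of a fibre or of an exceptional sphere, and transport back by the covering diffeomorphism, with the stability machinery of Section \ref{nodal} supplying the passage from one auxiliary form to the given $\omega$. The only difference is cosmetic: you make the appeal to Corollary \ref{irrcor} explicit where the paper leaves it implicit in the phrase ``for such a symplectic form.''
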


\begin{proof}
To begin, we show that $A$ as in  \eqref{A} is $D(M)$-equivalent to
either $E_1-\cdots -E_i$ or $F-E_1-\cdots -E_j$.  To see this, we
consider different values of $b$:

When $b=1$, $A$ is already in the desired form.

If $b=0$, then  $A=E_1-   E_2  - \cdots -E_l $.  If $A$ is characteristic then we are done.  Otherwise,  use  the reflection along $F-E_1- E_p$ to get to
$F-E_p -E_2 -\cdots -E_l$.
Here $p=b^-(M)-1$. Notice that $p > l$ in this case.

If $b= - 1$, then  $A= - F+E_1+E_2 - E_3 \cdots$.
Now reflect along $F-E_1-E_2$ to transform it to $F-E_1-E_2 -E_3 \cdots$.

When $b = -2$,  $A= -2F +E_1+E_2+ E_3 -E_4 \cdots$. Then reflection along $F-E_1-E_2$
transforms it into  $E_3 - E_1 -E_2 -  E_4 \cdots$.

For $b<-2$ use reflection along $F-E_1-E_2$ and induction to achieve this transformation.

Now assume that $A\cdot [\omega]>0$.  Transform the class $A$  via a diffeomorphism  $\phi$ to one of the forms above. Notice that all of the transformations above preserve the canonical class.  Thus the new class $\phi_*A$ is associated to some symplectic form with standard canonical class.  Note that both $E_1-\cdots -E_i$ and $F-E_1-\cdots -E_j$ can be represented by symplectic spheres obtained by an appropriate sequence of blow-ups for such a symplectic form.  Now apply $\phi^{-1}$ to get the desired result.

\end{proof}

\begin{cor}\label{irr-D}
Let $M$ be an irrational ruled symplectic manifold and  $A\in
H_2(M,\mathbb Z)$  a homology class with $A\cdot A=-l$. Then $A$  is
represented by a connected embedded symplectic sphere for some
symplectic form if and only if
 $A$  is $D(M)-$equivalent to one of the classes in Proposition \ref{irr}.

\end{cor}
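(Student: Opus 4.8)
The plan is to read off this corollary from three ingredients already in hand: the smooth normal form of Lemma \ref{irr}, the fixed-canonical-class existence statement of Proposition \ref{irr-blowdown}, and the transitivity of the $D(M)$-action on the set $\mathcal K$ of symplectic canonical classes for $\kappa(M)=-\infty$ (Lemma \ref{trans}). I would prove the two implications separately.

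For the forward direction, suppose $A$ is represented by a connected embedded $\omega$-symplectic sphere for \emph{some} symplectic form $\omega$. Such a representative is in particular a smooth embedded sphere, has positive symplectic area $A\cdot[\omega]>0$, and by the adjunction equality for connected embedded symplectic surfaces satisfies $g_\omega(A)=\tfrac12(K_\omega\cdot A+A\cdot A)+1=0$. Since $\kappa(M)=-\infty$, Lemma \ref{trans} furnishes a diffeomorphism $f$ with $f^*K_\omega=K_{st}$; replacing $\omega$ by $\omega'=f^*\omega$ and $A$ by its image $A'$ under the induced element of $D(M)$, we obtain a class $A'$, $D(M)$-equivalent to $A$, that is represented by an $\omega'$-symplectic sphere with $K_{\omega'}=K_{st}$. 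Then $A'$ meets all four hypotheses of Lemma \ref{irr}: it is smoothly represented by a sphere, $A'\cdot[\omega']>0$, and $g_{K_{st}}(A')=g_{\omega'}(A')=0\ge0$. Lemma \ref{irr} therefore puts $A'$ into the normal form \eqref{A} up to permutation of the $E_i$, so $A$ is $D(M)$-equivalent to one of the listed classes.

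For the converse, suppose $A$ is $D(M)$-equivalent to a class of the form \eqref{A}; applying the corresponding diffeomorphism we may assume $A$ itself has this form, since any symplectic representative produced below may be transported back at the end. As noted after Lemma \ref{irr}, every class of the form \eqref{A} is the blow-up of an exceptional sphere, and by choosing the blow-up sizes appropriately one produces a symplectic form $\omega$ with $K_\omega=K_{st}$ and $A\cdot[\omega]>0$. Proposition \ref{irr-blowdown} then yields an $\omega$-symplectic sphere in the class $A$. Transporting this representative back along the realizing diffeomorphism of the $D(M)$-equivalence gives a symplectic sphere in the original class for the corresponding pushed-forward symplectic form.

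The main obstacle is the forward direction, and specifically the reduction to the standard canonical class $K_{st}$: the corollary allows an arbitrary symplectic form, whereas both Lemma \ref{irr} and Proposition \ref{irr-blowdown} are stated relative to $K_{st}$. The essential bridge is the transitivity of the $D(M)$-action on $\mathcal K$ when $\kappa(M)=-\infty$ (Lemma \ref{trans}), together with the fact that this action is realized by honest orientation-preserving diffeomorphisms, so that the symplectic sphere, the positivity $A\cdot[\omega]>0$, and the genus data are all carried along faithfully. Once the canonical class is normalized to $K_{st}$, the symplectic genus of a symplectic sphere is forced to vanish by adjunction, and Lemma \ref{irr} applies verbatim.
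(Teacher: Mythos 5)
Your proposal is correct and follows essentially the same route as the paper: normalize the canonical class to $K_{st}$ via the transitivity of the $D(M)$-action (Lemma \ref{trans}), observe that adjunction forces $g_{\omega'}(A')=0$, and invoke Lemma \ref{irr} for the forward direction, with Proposition \ref{irr-blowdown} and a suitably chosen symplectic form handling the converse. The paper's own proof is just a terser version of exactly this argument.
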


\begin{proof}
Assume that $A$  is represented by a connected embedded
$\omega$-symplectic sphere for some symplectic form $\omega$.  The
transitive action of $D(M)$ provides a connected embedded
$\omega'$-symplectic sphere for some symplectic form $\omega'$ with
$K_{\omega'}=K_{st}$ in some class $A'$ $D(M)$-equivalent to $A$.

Notice that $g_\omega'(A')=0$ and thus the result follows from Prop.
\ref{irr}.

The converse follows from Prop. \ref{irr-blowdown} and an
appropriate choice of symplectic form.

\end{proof}

\begin{theorem}
Let $(M, \omega)$ be an irrational ruled symplectic manifold and $A\in H_2(M,\mathbb Z)$.

Then $A$  is represented by an
$\omega-$symplectic sphere if and only if
\begin{enumerate}
\item $A$ is represented by a smooth sphere,
\item $g_\omega(A)=0$ and
\item $A$ pairs positively with $\omega$.
\end{enumerate}

\end{theorem}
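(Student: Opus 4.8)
The plan is to treat the two implications separately: the forward direction is immediate from the adjunction formula, while the backward direction reduces, after normalizing the symplectic canonical class by a diffeomorphism, to the classification Lemma \ref{irr} together with the existence Proposition \ref{irr-blowdown}, both already established.

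\emph{Forward direction.} If $A$ is represented by a connected embedded $\omega$-symplectic sphere $V$, then $V$ is in particular a smooth sphere, giving (1); since $\omega|_V$ is an area form, $[\omega]\cdot A=\int_V\omega>0$, giving (3); and the symplectic adjunction equality gives $g_\omega(A)=\tfrac12(K_\omega\cdot A+A\cdot A)+1=g(V)=0$, giving (2).

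\emph{Backward direction.} Assume (1)--(3). Since $M$ is irrational ruled, $\kappa(M)=-\infty$, so Lemma \ref{trans} furnishes $\sigma=f_\ast\in D(M)$, realized by some $f\in\mathrm{Diff}^+(M)$, with $f_\ast K_\omega=K_{st}$. Put $A'=f_\ast A$ and $\omega'=(f^{-1})^\ast\omega$, so that $f^\ast\omega'=\omega$ and, by naturality of Poincar\'e duality, $K_{\omega'}=K_{st}$. Because $f_\ast$ preserves the intersection form, the three hypotheses pass to $A'$: it is again a smooth sphere, $[\omega']\cdot A'=[\omega]\cdot A>0$, and $g_{K_{st}}(A')=\tfrac12(K_{st}\cdot A'+A'\cdot A')+1=g_\omega(A)=0$. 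Writing $A'=aS+bF+\sum c_iE_i$, the fact that the base projection of any smooth sphere in an irrational ruled manifold is null-homotopic (as in the proof of Lemma \ref{irr}) forces $a=0$, whence $A'\cdot A'=-\sum c_i^2\le 0$. If $A'\cdot A'=0$ then every $c_i=0$ and $A'=bF$; from $K_{st}\cdot F=-2$ one computes $g_{K_{st}}(bF)=1-b$, so (2) forces $b=1$ and $A'=F$, which is the class \eqref{A} with $l=0$. If instead $A'\cdot A'=-l<0$, then $A'$ satisfies all four hypotheses of Lemma \ref{irr}, with $\omega'$ serving as the auxiliary form of canonical class $K_{st}$, so $A'$ equals a class of the form \eqref{A} up to permutation of the $E_i$. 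In either case Proposition \ref{irr-blowdown}, applied to the form $\omega'$ (whose canonical class is $K_{st}$) and to the class $A'$ with $[\omega']\cdot A'>0$, produces an $\omega'$-symplectic sphere $V'$ representing $A'$. Transporting back along $f$, the submanifold $f^{-1}(V')$ lies in the class $f^{-1}_\ast A'=A$ and is symplectic for $f^\ast\omega'=\omega$, which is the desired representative. (Alternatively, existence for some form could be quoted from Corollary \ref{irr-D} and then transferred to $\omega$ via the stability Lemma \ref{l:3.11}, but routing through Proposition \ref{irr-blowdown} keeps the specific form $\omega'$ in sight and avoids the $\mathcal D(\omega,A)$ bookkeeping.)

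\emph{Main obstacle.} The analytic substance---existence of symplectic representatives for the standard classes, and the genus-zero restriction on the admissible homology types---is already packaged in Lemma \ref{irr}, Proposition \ref{irr-blowdown}, and the adjunction equality, so the theorem is essentially an assembly of these. The one point demanding genuine care is the canonical-class normalization: one must verify that conditions (1)--(3) are covariant under the diffeomorphism $f$ with $f_\ast K_\omega=K_{st}$ and that $\omega'=(f^{-1})^\ast\omega$ indeed has canonical class $K_{st}$. This is exactly where the \emph{full} transitivity of the $D(M)$-action on $\mathcal K$ for $\kappa(M)=-\infty$ (Lemma \ref{trans}), rather than transitivity merely up to sign, is what legitimizes the reduction to $K_{st}$.
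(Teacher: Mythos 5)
Your proof is correct and follows essentially the same route as the paper's: normalize the canonical class to $K_{st}$ via a diffeomorphism from Lemma \ref{trans}, invoke Lemma \ref{irr} and Proposition \ref{irr-blowdown} for the normalized class, and transport the resulting symplectic sphere back. You are somewhat more careful than the paper in spelling out the covariance of conditions (1)--(3) under the diffeomorphism and in treating the $A'\cdot A'=0$ case (forcing $A'=F$) explicitly, which the paper dismisses with ``we may assume $A\cdot A<0$.''
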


\begin{proof}
Clearly if $A$  is represented by an
$\omega-$symplectic sphere the result follows.

Now consider the other direction.  $A=bF+\sum c_iE_i$ as in the
proof of Lemma \ref{irr}, by considering projection to the base,
thus we may assume that  $A\cdot A<0$.    Our assumptions imply that
there exists a diffeomorphism taking $A$ and $\omega$ to a class
$A'$ and symplectic form $\omega'$ with $K_{\omega'}=K_{st}$ such
that the above conditions continue to hold.  Lemma \ref{irr} and
Proposition \ref{irr-blowdown} then imply that $A'$ is represented
by a $\omega'$-symplectic sphere.  Now use the diffeomorphism to get
a $\omega$-symplectic sphere in the class $A$ (see Cor. \ref{ex}).

%


\end{proof}

This completes the proof of Theorem \ref{irratspec}.

%
%


\end{document}